\newcommand{\F}{\mathcal F}
\newcommand{\K}{\mathcal K}
\newcommand{\Z}{\mathcal Z}
\newcommand{\G}{\mathcal G}
\newcommand{\U}{\mathcal U}
\newcommand{\V}{\mathcal V}
\newcommand{\IN}{\mathbb N}
\newcommand{\IZ}{\mathbb Z}
\newcommand{\IR}{\mathbb R}
\newcommand{\IC}{\mathbb C}
\newcommand{\C}{\mathcal C}
\newcommand{\id}{\mathrm{id}}
\newcommand{\ret}{\mathsf r}
\newcommand{\e}{\varepsilon}
\newcommand{\w}{\omega}
\newcommand{\Ra}{\Rightarrow}
\newcommand{\Lip}{\mathrm{Lip}}
\newcommand{\diam}{\mathrm{diam}}
\newcommand{\dist}{\mathrm{dist}}
\newtheorem{theorem}{Theorem}[section]
\newtheorem{proposition}[theorem]{Proposition}
\newtheorem{lemma}[theorem]{Lemma}
\newtheorem{problem}[theorem]{Problem}
\newtheorem{corollary}[theorem]{Corollary}
\theoremstyle{definition}
\newtheorem{remark}[theorem]{Remark}
\newtheorem{example}[theorem]{Example}
\title{Embedding fractals in Banach, Hilbert or Euclidean spaces}
\author{Taras Banakh, Magdalena Nowak, Filip Strobin}
\address{T.Banakh: Ivan Franko National University of Lviv (Ukraine) and  Jan Kochanowski University in Kielce (Poland)}
\email{t.o.banakh@gmail.com}
\address{M.Nowak: Institute of Mathematics, Jan Kochanowski University in Kielce, ul. \'Swietokrzyska 15, 25-406 Kielce, Poland}
\email{magdalena.nowak805@gmail.com}
\address{F.Strobin: Institute of Mathematics, Lodz University of Technology, W\'olcza\'nska 215, 90-924 \L\'od\'z, Poland}
\email{filip.strobin@p.lodz.pl}
\subjclass[2010]{Primary: 28A80; Secondary: 37C25, 37C70, 46C05, 46B85, 51F99, 54E70}
\keywords{fractal structure, metric fractal, ultrametric fractal, topological fractal, ultrafractal, fractal in a Banach space, Kameyama metric, doubling metric}
\date{}
\begin{document}
\begin{abstract}
By a {\em metric fractal} we understand a compact metric space $K$ endowed with a finite family $\F$ of contracting self-maps of $K$ such that $K=\bigcup_{f\in\F}f(K)$. If $K$ is a subset of a metric space $X$ and each $f\in\F$ extends to a contracting self-map  of $X$, then we say that $(K,\F)$ is a fractal in $X$. We prove that each metric fractal $(K,\F)$ is 
\begin{itemize}
\item isometrically equivalent to a fractal in the Banach spaces $C[0,1]$ and $\ell_\infty$;
\item bi-Lipschitz equivalent to a fractal in the Banach space $c_0$;
\item isometrically equivalent to a fractal in the Hilbert space $\ell_2$ if $K$ is an ultrametric space.
\end{itemize}
We prove that for a metric fractal $(K,\F)$ with the doubling property there exists $k\in\IN$ such that the metric fractal $(K,\F^{\circ k})$ endowed with the fractal structure $\F^{\circ k}=\{f_1\circ\dots\circ f_k:f_1,\dots,f_k\in\F\}$ is equi-H\"older equivalent to a fractal in a Euclidean space $\IR^d$. This result is used to prove our main result saying that each finite-dimensional compact metrizable space $K$ containing an open uncountable zero-dimensional space $Z$ is homeomorphic to a fractal in a Euclidean space $\IR^d$. For $Z$, being a copy of the Cantor set, this embedding result was proved by Duvall and Husch in 1992.
\end{abstract}
\maketitle

\section{Preliminaries}\label{s1}

In this paper we start a systematic study of fractals that are equivalent (in isometric, bi-Lipschitz, equi-H\"older or topological sense) to fractals in some special metric spaces (like Euclidean, Hilbert or Banach spaces).
Fractals are defined as attractors of iterated function systems consisting of  contracting self-maps of complete metric spaces.

A self-map $f:X\to X$ on a metric space $(X,d_X)$ is called
{\em contracting} if $\Lip(f)<1$, where 
$$\Lip(f):=\sup_{x\ne y}\frac{d_X(f(x),f(y))}{d_X(x,y)}$$is  the {\em Lipschitz constant} of $f$. 

By the classical Banach Contraction Principle, each contracting self-map $f:X\to X$ of a complete metric space $X$ has a unique fixed point $x_\infty$, which can be found as the limit of the Cauchy sequence $(x_n)_{n\in\w}$ starting with an arbitrary point $x_0$ and having $x_{n+1}=f(x_n)$ for all $n\in\w$.

Any finite family $\F$ of contracting self-maps on a complete metric space $X$ induces a contracting self-map $$\breve\F:\K(X)\to\K(X),\;\;\breve\F:K\mapsto\bigcup_{f\in\F}f(K),$$ of the hyperspace $\K(X)$ of non-empty compact subsets of $X$, endowed with the Hausdorff metric (see, {\cite{BKNNS}, \cite{B}, \cite{Ed}, \cite{Hut}}). By the Banach Contraction Principle, the map $\breve\F$ has a unique fixed point $K=\bigcup_{f\in\F}f(K)$, which can be  be found as the limit of the Cauchy sequence $(K_n)_{n\in\w}$ starting with an arbitrary non-empty compact set $K_0\subset X$ and having $K_{n+1}=\breve\F(K_n)$ for all $n\in\w$.

This unique fixed point $K$ is called the {\em attractor} of the system $\F$ (briefly the {\em $\F$-fractal}) in the complete metric space $X$. 
The $\F$-fractal $K$  carries a special structure called the {\em fractal structure}.

By definition, a {\em fractal structure} on a compact metric space $K$ is a finite family $\F$ of contracting self-maps of $K$ such that $K=\bigcup_{f\in\F}f(K)$. The number $$\Lip(\F):=\max_{f\in\F}\Lip(f)$$ is called the {\em Lipschitz constant} of the fractal structure $\F$.  It is easy to see that for any fractal structure $\F$ on a compact metric space $K$ and any $n\in\IN$ the family $\F^{\circ n}=\{f_1\circ\dots\circ f_n:f_1,\dots,f_n\in\F\}$ is a fractal structure on $K$ with Lipschitz constant $\Lip(\F^{\circ n})\le\Lip(\F)^n$. 

A compact metric space $K$ endowed with a fractal structure $\F$ will be called a {\em metric fractal}. In other words, a {\em metric fractal} is a pair $(K,\F)$ consisting of a compact metric space $K$ and a fractal structure $\F$ on $K$.
The space $K$ will be called the {\em underlying metric space} of the metric fractal $(K,\F)$.

Therefore, for any finite family $\F$ of contracting self-maps of a complete metric space $X$ its $\F$-fractal $K$ is the underlying metric space of the metric fractal $(K,\F{\restriction}K)$, where $\F{\restriction}K=\{f{\restriction}K:f\in\F\}$ and $f{\restriction}K$ is the restriction of $f$ to $K$. The pair $(K,\F{\restriction}K)$ will be called a {\em fractal} in the metric space $X$.

The notion of a metric fractal has also a topological counterpart. A system $\F$ of continuous self-maps of a topological space $X$ is called {\em topologically contracting} if for any open cover $\U$ of $X$ there exists a number $n$ such that for any functions $f_1,\dots,f_n\in\F$ the set $f_1\circ\dots\circ f_n(X)$ is contained in some set $U\in\U$. It is known \cite{BKNNS} that a finite system $\F$ of continuous self-maps on a compact Hausdorff space $X$ is topologically contracting if and only if for any infinite sequence $(f_n)_{n\in\w}\in\F^\w$ the intersection $\bigcap_{n\in\w}f_0\circ\dots\circ f_n(X)$ is a singleton.

A {\em topological fractal} is a pair $(X,\F)$ consisting of a compact Hausdorff topological space $X$ and a finite topologically  contracting system $\F$ of continuous self-maps such that $X=\bigcup_{f\in\F}f(X)$. The compact topological space $X$ will be called the {\em underlying topological space} of a topological fractal $(X,\F)$.

It is easy to see that each metric fractal is a topological fractal. By \cite{BN1} and \cite{NS}, there exists a topological fractal whose underlying topological space is not homeomorphic to the underlying topological space of any metric fractal. On the other hand, for any topological fractal $(X,\F)$ its underlying topological space $X$ is metrizable by a metric $d$ such that $d(f(x),f(y))<d(x,y)$ for any $f\in\F$ and any distinct points $x,y\in X$ (see \cite[6.2]{BKNNS} or \cite{MM}).

The aim of this paper is to recognize metric and topological fractals which are equivalent to fractals in some special metric spaces, like Banach, Hilbert or Euclidean spaces.
By {a} {\em Euclidean space} we understand any finite-dimensional Hilbert space; a {\em Hilbert space} is a Banach space whose norm is generated by an inner product. It is well-known that each Euclidean space is isometric to some space $\IR^n$ endowed with the standard Euclidean distance.

Now we define the notion of equivalence of fractals. Two topological fractals $(X,\F_X)$ and $(Y,\F_Y)$ are called {\em topologically equivalent} if there exist a homeomorphism  $h:X\to Y$ such that $\F_Y=\{h\circ f\circ h^{-1}:f\in\F_X\}$. If $(X,\F_X)$ and $(Y,\F_Y)$ are metric fractals and  the homeomorphism $h$ is an isometry (resp. $\e$-isometry, bi-Lipschitz, equi-H\"older, bi-H\"older) homeomorphism, then the fractals are called {\em isometrically} (resp. {\em $\e$-isometrically, bi-Lipschitz, equi-H\"older, bi-H\"older}) {\em equivalent}.

A homeomorphism $f:X\to Y$ between metric spaces $(X,d_X)$ and $(Y,d_Y)$ is called
\begin{itemize}
\item an {\em isometry} if $d_Y(f(x),f(y))=d_X(x,y)$ for all $x,y\in X$;
\item an {\em $\e$-sometry} for some $\e>0$ if\newline $\frac1{1+\e}\cdot d_X(x,y)\le d_Y(f(x),f(y))\le (1+\e)\cdot d_X(x,y)$ for all $x,y\in X$;
\item  {\em bi-Lipschitz} if there exist two positive real constants $c,C$ such that\newline $c\cdot d_X(x,y)\le d_Y(f(x),f(y))\le C\cdot d_X(x,y)$ for all $x,y\in X$;
\item {\em equi-H\"older} if there exist three positive real constants $c,C,\alpha$ such that\newline $c\cdot d_X(x,y)^\alpha\le d_Y(f(x),f(y))\le C\cdot d_X(x,y)^\alpha$ for all $x,y\in X$;
\item {\em bi-H\"older} if there exist four positive real constants $c,C,\alpha,\beta$ such that\newline $c\cdot d_X(x,y)^\alpha\le d_Y(f(x),f(y))\le C\cdot d_X(x,y)^\beta$ for all $x,y\in X$.
\end{itemize}
For these four types of homeomorphisms we have the implications
$$\centerline{isometry $\Ra$ $\e$-isometry $\Ra$ bi-Lipschitz $\Ra$ equi-H\"older $\Ra$ bi-H\"older.}
$$

In the next section we present some results on the (isometric, $\e$-isometric, bi-Lipschitz, equi-H\"older) topological equivalence of (metric) topological fractals to fractals in Banach, Hilbert or Euclidean spaces.  The most difficult result of this paper is Theorem~\ref{t:main} saying that each finite-dimensional compact metrizable space $X$ containing an open uncountable zero-dimensional subspace $Z$ is homeomorphic to a fractal in a Euclidean space. The particular case of Theorem~\ref{t:main} with $Z$ being homeomorphic to a the Cantor set was proved by Duvall and Husch  \cite{DH} in 1992.
\section{Embedding of metric fractals into (almost) universal metric spaces} 

A metric space $X$ is called {\em universal} if it contains an isometric copy of each compact metric space.

A classical example of a universal metric space is the Banach space $C[0,1]$ of continuous real-valued functions on $[0,1]$  {(see \cite{Ban})}.
By a result of Dutrieux and Lancien \cite{DL}, a Banach space $X$ is universal if and only if it contains a linear isometric copy of $C[0,1]$.
In particular, the Banach space $\ell_\infty$ of bounded sequences is universal.

\begin{theorem}\label{t:embU} Any metric fractal $(X,\F)$ is isometrically equivalent to a fractal in any universal complete metric space $U$.
\end{theorem}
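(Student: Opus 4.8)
The plan is to reduce the statement to a single Lipschitz-extension problem and then to solve that problem by routing through the injective hull of $X$. First I would fix an isometric embedding $i\colon X\to U$, which exists because $U$ is universal, and identify $X$ with its image $i(X)\subseteq U$. The purpose of this reduction is that, once each $f\in\F$ has been extended to a contracting self-map $\bar f\colon U\to U$ with $\bar f{\restriction}X=f$, the copy $X$ is automatically the attractor: indeed $\breve{\bar\F}(X)=\bigcup_{f\in\F}\bar f(X)=\bigcup_{f\in\F}f(X)=X$, so $X$ is a fixed point of the contracting hyperspace map $\breve{\bar\F}$ on $\K(U)$, and hence, by the uniqueness in the Banach Contraction Principle, it coincides with the $\bar\F$-fractal. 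Thus everything comes down to extending each $f$ to a self-map of $U$ of Lipschitz constant strictly less than $1$.

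The main obstacle is precisely this extension. A $\lambda$-Lipschitz self-map of the compact set $X\subseteq U$ need not admit any Lipschitz extension to $U$ with constant $<1$, because $U$ is an arbitrary (possibly non-hyperconvex) universal space and nearest-point projections onto $X$ are unavailable. To circumvent this I would not extend into $U$ directly but interpose an injective (hyperconvex) compact space. Concretely, let $E(X)$ denote the injective hull (tight span) of $X$: it is an injective metric space containing $X$ isometrically, and for compact $X$ it is again compact, since its points are $1$-Lipschitz functions on $X$ bounded by $\diam X$, a family that is compact by Arzel\`a--Ascoli, and $E(X)$ is a closed subset of it. As $U$ is universal, it contains an isometric copy of the compact space $E(X)$, so I may arrange $X\subseteq E(X)\subseteq U$ isometrically.

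Now injectivity does the work twice. On the one hand, because $E(X)$ is injective it is a nonexpansive retract of every metric space containing it, so there is a retraction $r\colon U\to E(X)$ with $\Lip(r)\le 1$. On the other hand, every $\lambda$-Lipschitz map from a subset of a metric space into an injective space extends over the whole space with the same constant $\lambda$ (by rescaling the source metric this follows from the nonexpansive extension property); applying this to $f\colon X\to X\subseteq E(X)$ yields $\hat f\colon E(X)\to E(X)$ with $\Lip(\hat f)\le\Lip(f)<1$ and $\hat f{\restriction}X=f$. Setting $\bar f:=\hat f\circ r\colon U\to E(X)\subseteq U$ then gives a self-map of $U$ with $\bar f{\restriction}X=f$ and $\Lip(\bar f)\le\Lip(\hat f)\cdot\Lip(r)\le\Lip(f)<1$, so $\bar f$ is contracting.

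Finally I would assemble these extensions into the family $\bar\F=\{\bar f:f\in\F\}$ of contracting self-maps of $U$ and invoke the reduction of the first paragraph: the $\bar\F$-fractal equals $X$, and the restrictions $\bar f{\restriction}X=i\circ f\circ i^{-1}$ make the fractal $(X,\bar\F{\restriction}X)$ in $U$ isometrically equivalent to $(X,\F)$ via $i$. The only genuinely delicate ingredients, which I would isolate as preliminary facts, are the compactness of the injective hull of a compact metric space and the two constant-preserving properties of injective spaces; granting these, the contraction estimate $\Lip(\bar f)<1$ is immediate, and in particular no passage to the iterates $\F^{\circ k}$ is needed.
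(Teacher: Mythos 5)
Your proof is correct and takes essentially the same route as the paper: the compact \emph{absolute $1$-Lipschitz extensor} $\epsilon X\supseteq X$ that the paper imports from Isbell is precisely the injective hull $E(X)$ you construct, and both arguments embed it into $U$ by universality, extend each $f\in\F$ with the same Lipschitz constant, and identify $X$ as the attractor via uniqueness in the Banach Contraction Principle. Your only (immaterial) deviation is factoring the extension as $\hat f\circ r$ through a $1$-Lipschitz retraction $r\colon U\to E(X)$, whereas the paper extends $f\colon X\to\epsilon X$ over $U$ in a single step.
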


\begin{proof} By a result of Isbell \cite{Isbell64} (mentioned on page 32 of \cite{BL}), the compact metric space $X$ is a subset of a compact metric space $\epsilon X$, which is an {\em absolute $1$-Lipschitz extensor}. The latter means that each Lipschitz map $f:A\to \epsilon X$ defined on a subset $A$ of a metric space $M$ has a Lipschitz extension $\bar f:M\to\epsilon X$ with the same Lipschitz constant $\Lip(\bar f)=\Lip(f)$.

Given a universal metric space $U$, we can identify the compact metric space  $\epsilon X$ with a subspace of $U$. So, $X\subset\epsilon X\subset U$. Since $\epsilon X$ is an absolute $1$-Lipschitz extensor, every map $f\in\F$ has a Lipschitz extension $\bar f:U\to\epsilon X\subset U$ with Lipschitz constant $\Lip(\bar f)=\Lip(f)<1$.

Then the function system $\bar\F:=\{\bar f:f\in\F\}$ consists of contracting self-maps of $U$ and has the set $X=\bigcup_{f\in\F}f(X)=\bigcup_{f\in\F}\bar f(X)$ as its attractor. So, the metric fractal $(X,\F)$ is isometric to the fractal $(X,\bar \F{\restriction}X)$ in $U$.
\end{proof}

	\begin{remark} Theorem~\label{r:embU} has a topological version, proved in \cite{BS}: {\em each topological fractal $(X,\F)$ is topologically equivalent to a topological fractal in a topological space $U$, containing a topological copy of any compact metrizable space.}
\end{remark}

A metric space $X$ is defined to be {\em almost universal} if for every $\e>0$, every compact metric space $K$ is $\e$-isometric to a subspace of $X$.
A classical example of an almost universal metric space
 is the Banach space $c_0$. This important fact was  proved by Kalton and Lancien in \cite{KL}. By \cite{DL}, the Banach space $c_0$ is not universal. 
 
 
\begin{theorem}\label{t:aU} For any $\e>0$, any metric fractal $(X,\F)$ is 
$\e$-isometrically equivalent (and hence bi-Lipschitz equivalent) to a fractal in any almost universal complete metric space $U$.
\end{theorem}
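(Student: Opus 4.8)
The plan is to follow the scheme of the proof of Theorem~\ref{t:embU}, replacing the isometric embedding of the injective hull by a $\delta$-isometric one for a suitably small parameter $\delta$, and then to keep track of how the Lipschitz constants get distorted. As in the proof of Theorem~\ref{t:embU}, I would first invoke Isbell's theorem to place $X$ isometrically into a compact metric space $\epsilon X$ that is an absolute $1$-Lipschitz extensor.

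Since $\Lip(\F)<1$, I can fix in advance a number $\delta\in(0,\e]$ so small that $(1+\delta)^2\Lip(\F)<1$; this is the only place where the strict contractivity of $\F$ is used, and it is what creates room for the unavoidable distortion. Because $U$ is almost universal and $\epsilon X$ is compact, there is a $\delta$-isometric embedding $j:\epsilon X\to U$; write $Y:=j(\epsilon X)$ and note that $j{\restriction}X:X\to j(X)$ is a $\delta$-isometry (hence an $\e$-isometry) onto its image.

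Next, for each $f\in\F$ I would extend the conjugated map across all of $U$. Consider $f\circ j^{-1}:j(X)\to X\subset\epsilon X$; since $j^{-1}$ is $(1+\delta)$-Lipschitz, this composition has Lipschitz constant at most $(1+\delta)\Lip(\F)$. Using that $\epsilon X$ is an absolute $1$-Lipschitz extensor, extend it to a map $g_f:U\to\epsilon X$ with the same Lipschitz constant, and set $\bar f:=j\circ g_f:U\to Y\subset U$. Then $\Lip(\bar f)\le(1+\delta)^2\Lip(\F)<1$, so $\bar\F:=\{\bar f:f\in\F\}$ is a finite system of contracting self-maps of $U$, and on $j(X)$ we have $\bar f=j\circ f\circ j^{-1}$.

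It remains to identify the attractor and to conclude. Since $\bar f{\restriction}j(X)=j\circ f\circ j^{-1}$, one computes $\bigcup_{f\in\F}\bar f(j(X))=j\big(\bigcup_{f\in\F}f(X)\big)=j(X)$, so $j(X)$ is a nonempty compact fixed point of $\breve{\bar\F}$ and hence, by the Banach Contraction Principle in $\K(U)$ (using completeness of $U$), the attractor of $\bar\F$. Thus $(j(X),\bar\F{\restriction}j(X))$ is a fractal in $U$, and the $\e$-isometry $j{\restriction}X$ conjugates $\F$ to $\bar\F{\restriction}j(X)$, witnessing the $\e$-isometric equivalence; bi-Lipschitz equivalence then follows from the implications recorded after the definition of the five types of homeomorphism. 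The only genuine obstacle is the bookkeeping of constants: an $\e$-isometry really does distort Lipschitz constants, so the extended maps stay contracting only because the margin $1-\Lip(\F)>0$ lets us absorb the factor $(1+\delta)^2$ by shrinking $\delta$.
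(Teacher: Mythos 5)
Your proposal is correct and follows essentially the same route as the paper's own proof: Isbell's embedding of $X$ into the compact absolute $1$-Lipschitz extensor $\epsilon X$, a $\delta$-isometric embedding of $\epsilon X$ into $U$ with $(1+\delta)^2\Lip(\F)<1$, extension of the conjugated maps via the extensor property, and composition back into $U$ to get contractions whose restrictions to the image of $X$ are the conjugates of $\F$. The only difference is cosmetic: you spell out explicitly that the image of $X$ is the attractor via uniqueness in the Banach Contraction Principle on $\K(U)$, a step the paper asserts without elaboration.
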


\begin{proof} Let $(U,d_U)$ be an almost universal metric space. Since $\Lip(\F)<1$, we can choose a positive real number $\delta<\e$ so small that $(1+\delta)^2\cdot\Lip(\F)<1$.

By \cite{Isbell64}, the compact metric space $X$ is a subspace of a compact metric space $\epsilon X$, which is an absolute $1$-Lipschitz extensor.
By the almost universality of the metric space $U$, there exists a $\delta$-isometric embedding $\varphi:\epsilon X\to U$, which means that $$\frac1{1+\delta}\cdot d_{\epsilon X}(x,y)\le d_U(\varphi(x),\varphi(y))\le(1+\delta)\cdot d_{\epsilon X}(x,y)\mbox{ \ for all $x,y\in \epsilon X$}.$$
The first inequality implies that for each $f\in\F$ the function $f\circ \varphi^{-1}:\varphi(X)\to X\subset\epsilon X$ has Lipschitz constant $\Lip(f{\circ}\varphi^{-1})\le (1+\delta)\cdot\Lip(f)$ and hence has a Lipschitz extension $\tilde f:U\to \epsilon X$ with Lipschitz constant $\Lip(\tilde f)=\Lip(f{\circ}\varphi^{-1})\le (1+\delta)\cdot\Lip(f)$. Then the map $\bar f=\varphi\circ\tilde f:U\to U$ has Lipschitz constant $\Lip(\bar f)\le (1+\delta)\cdot \Lip(\tilde f)\le (1+\delta)^2\cdot\Lip(f)<1$.

Therefore, the function system $\bar\F:=\{\bar f:f\in\F\}$ consists of contracting maps of $U$ and has the set $\varphi(X)=\bigcup_{f\in\F}\varphi\circ f(X)=\bigcup_{f\in\F}\bar f(\varphi(X))$ as its attractor. Moreover, $\{\bar f{\restriction}\varphi(X):\bar f\in\bar\F\}=
\{\varphi{\circ}f{\circ}\varphi^{-1}:f\in\F\}$, which means that the fractal $(X,\F)$ is $\e$-isometrically (and hence bi-Lipschitz) equivalent to the fractal $(\varphi(X),\bar\F{\restriction}\varphi(X))$ in $U$.
\end{proof}

Applying Theorems~\ref{t:embU} and \ref{t:aU} to the (almost) universal Banach spaces $\ell_\infty$, $C[0,1]$ (and $c_0$), we obtain the following embeddability theorem.

\begin{corollary} Any metric fractal $(X,\F)$ is
\begin{itemize}
\item isometrically equivalent to a fractal in the Banach space $C[0,1]$;
\item isometrically equivalent to a fractal in the Banach space $\ell_\infty$;
\item bi-Lipschitz equivalent to a fractal in the Banach space $c_0$.
\end{itemize}
\end{corollary}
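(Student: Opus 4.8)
The plan is to deduce all three assertions directly from Theorems~\ref{t:embU} and~\ref{t:aU} by checking, for each target Banach space, that it satisfies the hypotheses of the relevant theorem. Since every Banach space is a complete metric space, the completeness requirement in both theorems is automatic; the only thing left to verify is universality for $C[0,1]$ and $\ell_\infty$, and almost universality for $c_0$. Thus the corollary is really just a packaging of the two embedding theorems, and its content reduces to recording which universality property each space enjoys.

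First I would treat $C[0,1]$ and $\ell_\infty$. The classical Banach--Mazur theorem asserts that $C[0,1]$ contains an isometric copy of every separable (in particular every compact) metric space, so $C[0,1]$ is universal in the sense defined above. For $\ell_\infty$ one may either use the same fact together with the standard linear isometric embedding $C[0,1]\hookrightarrow\ell_\infty$ obtained by evaluating functions at a countable dense subset of $[0,1]$, or appeal directly to the Dutrieux--Lancien criterion already quoted, by which any Banach space containing a linear isometric copy of $C[0,1]$ is universal. In both cases $C[0,1]$ and $\ell_\infty$ are universal complete metric spaces, so Theorem~\ref{t:embU} applies and yields that $(X,\F)$ is isometrically equivalent to a fractal in each of them, giving the first two items.

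For the third item I would invoke the Kalton--Lancien result that $c_0$ is almost universal: for every $\e>0$, every compact metric space admits an $\e$-isometric embedding into $c_0$. Since $c_0$ is complete, Theorem~\ref{t:aU} applies for each fixed $\e>0$ and gives, in particular, that $(X,\F)$ is bi-Lipschitz equivalent to a fractal in $c_0$, as required.

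I do not anticipate any genuine obstacle: the proof is pure bookkeeping over the hypotheses of the two theorems, and no new estimate is needed. The only point deserving a word of care is that $c_0$ is \emph{not} universal (again by Dutrieux--Lancien), so one cannot hope to strengthen the third item to an isometric statement and must be content with the bi-Lipschitz conclusion supplied by Theorem~\ref{t:aU}; this is exactly why $c_0$ is singled out from the first two spaces.
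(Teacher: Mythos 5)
Your proposal is correct and matches the paper's own (one-line) proof: the paper likewise obtains the corollary by applying Theorem~\ref{t:embU} to the universal spaces $C[0,1]$ and $\ell_\infty$ (universality justified exactly by the Banach--Mazur fact and the Dutrieux--Lancien criterion cited in the text) and Theorem~\ref{t:aU} to $c_0$, whose almost universality is the Kalton--Lancien result. Your added remark on why $c_0$ only yields the bi-Lipschitz conclusion is also consistent with the paper's observation that $c_0$ is not universal.
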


It is well-known that for each infinite compact Hausdorff space $K$, the Banach space $C(K)$ of continuous real-valued functions on $K$ contains an isometric copy of the Banach space $c_0$, which implies that the Banach space $C(K)$ is almost universal. If the compact space $K$ is not scattered, then it admits a continuous map onto $[0,1]$, which implies that $C(K)$ contains an isometric copy of the Banach space $C[0,1]$ and hence $C(K)$ is universal. Combining these observations with Theorem~\ref{t:embU}, we obtain the following embedding result.

\begin{theorem} Let $K$ be a (non-scattered) infinite compact Hausdorff space. Any metric fractal $(X,\F)$ is (isometrically) bi-Lipschitz equivalent to a fractal in the Banach space $C(K)$.
\end{theorem}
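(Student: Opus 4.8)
The plan is to read this statement as a direct corollary of Theorems~\ref{t:embU} and~\ref{t:aU}: since $C(K)$ with the sup-norm is a Banach space, hence a complete metric space, the two general embedding theorems apply verbatim once we know the right universality property of $C(K)$. Thus the whole task reduces to the two structural facts already announced before the statement: (i) $C(K)$ contains a linear isometric copy of $c_0$ whenever $K$ is infinite, so $C(K)$ is almost universal; and (ii) $C(K)$ contains a linear isometric copy of $C[0,1]$ whenever $K$ is non-scattered, so $C(K)$ is universal. Granting (i), Theorem~\ref{t:aU} gives the $\e$-isometric (hence bi-Lipschitz) equivalence for arbitrary infinite $K$; granting (ii), Theorem~\ref{t:embU} upgrades this to isometric equivalence in the non-scattered case.

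To establish (i) I would first produce an infinite sequence $(U_n)_{n\in\omega}$ of pairwise disjoint nonempty open subsets of $K$; such a sequence exists in any infinite compact Hausdorff space (use the singletons of the isolated points if there are infinitely many, and otherwise iterate the regularity-based separation of a point from a non-isolated point $p$, keeping a shrinking neighborhood base of $p$). Picking $x_n\in U_n$ and applying the Urysohn lemma (valid since a compact Hausdorff space is normal), choose continuous $f_n:K\to[0,1]$ with $f_n(x_n)=1$ and $f_n$ vanishing off $U_n$. Because the supports are pairwise disjoint, for every $(a_n)\in c_0$ the series $\sum_n a_nf_n$ converges uniformly to a continuous function of norm $\sup_n|a_n|$, so $(a_n)\mapsto\sum_n a_nf_n$ is a linear isometry of $c_0$ into $C(K)$. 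As $c_0$ is almost universal by the theorem of Kalton and Lancien, and almost universality is inherited by any space containing an isometric copy of an almost universal space, $C(K)$ is almost universal.

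For (ii) I would invoke the classical dichotomy that a compact Hausdorff space is non-scattered exactly when it admits a continuous surjection onto $[0,1]$. Concretely, a non-scattered $K$ contains a nonempty perfect closed subset $P$; a perfect compact Hausdorff space maps continuously onto the Cantor set (by a standard binary splitting of nonempty closed sets with nonempty interior) and hence onto $[0,1]$, and this surjection extends over $K$ by the Tietze--Urysohn theorem to a continuous surjection $q:K\to[0,1]$. Then $g\mapsto g\circ q$ is a linear isometric embedding $C[0,1]\to C(K)$, isometric precisely because $q$ is onto. By the Dutrieux--Lancien criterion quoted earlier, containing a linear isometric copy of $C[0,1]$ makes $C(K)$ universal.

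I expect the only genuinely non-routine ingredient to be the passage from ``$K$ contains a perfect set'' to ``$K$ surjects onto $[0,1]$'': since $K$ need not be metrizable, one must build the surjection on the perfect subset by a transfinite/inductive splitting and then extend it, rather than simply quoting the metric ``perfect $\Rightarrow$ contains a Cantor set'' fact. Everything else is a mechanical assembly of the Urysohn lemma, the disjoint-support $c_0$-isometry, the two cited universality criteria, and the two previously proved embedding theorems; in particular I would present (i) and (ii) as classical and only sketch their constructions, since the substance of the statement is the reduction to Theorems~\ref{t:embU} and~\ref{t:aU}.
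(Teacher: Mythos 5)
Your reduction is exactly the paper's own proof: the paper obtains this theorem by noting that $C(K)$ contains an isometric copy of $c_0$ whenever $K$ is infinite (hence $C(K)$ is almost universal) and an isometric copy of $C[0,1]$ whenever $K$ is non-scattered (hence $C(K)$ is universal, via the Dutrieux--Lancien criterion), and then applying Theorems~\ref{t:aU} and~\ref{t:embU}. The paper cites both containment facts as well known; you sketch them, and your $c_0$-embedding via disjointly supported Urysohn functions is correct.

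One intermediate claim in your sketch of fact (ii) is, however, false as stated: a nonempty perfect compact Hausdorff space $P$ need not admit a continuous surjection onto the Cantor set --- if $P$ is connected (e.g.\ $P=[0,1]$), every continuous image of $P$ inside the Cantor set is a single point, since continuous images of connected spaces are connected. What the binary-splitting construction actually produces is a Cantor scheme of nonempty open sets $U_s$, $s\in 2^{<\omega}$, with $\overline{U_{s0}}\cup\overline{U_{s1}}\subset U_s$ and $\overline{U_{s0}}\cap\overline{U_{s1}}=\emptyset$; its limit set $D=\bigcap_{n\in\omega}\bigcup_{s\in 2^n}\overline{U_s}$ is a \emph{closed subset} of $P$ that maps continuously onto $2^\omega$, hence onto $[0,1]$. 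This repair costs you nothing: you extend the surjection $D\to[0,1]$ over all of $K$ by Tietze--Urysohn exactly as you planned (surjectivity is preserved since it already holds on $D$), and the rest of your argument, and hence the theorem, goes through unchanged.
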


\section{Embedding metric fractals into Hilbert spaces} 

In this section we discuss the problem of embeddings of metric fractals into Hilbert spaces. It is well-known that each Hilbert space $H$ is linearly isometric to the Hilbert space $\ell_2(\kappa)$ for a suitable cardinal $\kappa$. The Hilbert space $\ell_2(\kappa)$ consists of functions $f:\kappa\to\IR$ with $\sum_{x\in\kappa}|f(x)|^2<\infty$. The norm of $\ell_2(\kappa)$ is generated by the inner product $\langle f,g\rangle:=\sum_{x\in\kappa}f(x)g(x)$. 

By the classical Kirszbraun Theorem 1.12 in \cite{BL}, any Lipschitz map $f:A\to H$ defined on a subset $A$ of a Hilbert space $H$ extends to a Lipschitz map $\bar f:H\to H$ with the same Lipschitz constant $\Lip(\bar f)=\Lip(f)$. Using this theorem of Kirszbraun, we can prove the following characterization.

\begin{theorem}\label{t:isoH} A metric fractal $(K,\F)$ is isometrically equivalent to a fractal in a Hilbert space $H$ if and only if its underlying metric space $K$ admits an isometric embedding in $H$.
\end{theorem}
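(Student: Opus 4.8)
The plan is to prove the two implications separately, noting that the nontrivial content lies in the ``if'' direction, for which I would follow the scheme of the proof of Theorem~\ref{t:embU}, with the Kirszbraun Theorem playing the role that Isbell's absolute Lipschitz extensor plays there. The key structural observation is that a Hilbert space is \emph{itself} an absolute $1$-Lipschitz extensor (this is exactly the content of Kirszbraun's theorem quoted above), so no auxiliary extensor $\epsilon X$ is needed and the contracting maps can be extended directly from $K$ to all of $H$.

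For the ``only if'' direction I would argue that it is immediate from the definitions. If $(K,\F)$ is isometrically equivalent to a fractal $(K',\F')$ in $H$, then there is an isometry $h:K\to K'$, and since $K'\subset H$, composing $h$ with the inclusion $K'\hookrightarrow H$ yields an isometric embedding of $K$ into $H$.

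For the ``if'' direction I would fix an isometric embedding $\iota:K\to H$ and identify $K$ with its image $\iota(K)\subset H$. Under this identification each $f\in\F$ becomes a map $f:K\to K\subset H$ defined on the subset $K$ of the Hilbert space $H$, with Lipschitz constant $\Lip(f)<1$. Here I would invoke the Kirszbraun Theorem: each such $f$ extends to a Lipschitz map $\bar f:H\to H$ with the same Lipschitz constant $\Lip(\bar f)=\Lip(f)<1$, so $\bar f$ is a contracting self-map of $H$. I would then set $\bar\F=\{\bar f:f\in\F\}$ and check that its attractor in $H$ is exactly $K$: since each $\bar f$ restricts to $f$ on $K$ and $\F$ is a fractal structure, we get $\breve{\bar\F}(K)=\bigcup_{f\in\F}\bar f(K)=\bigcup_{f\in\F}f(K)=K$, so $K$ is a fixed point of the contracting map $\breve{\bar\F}:\K(H)\to\K(H)$; by the uniqueness part of the Banach Contraction Principle, $K$ is the attractor of $\bar\F$ in $H$. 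Consequently $(K,\bar\F{\restriction}K)$ is a fractal in $H$, and since $\bar f{\restriction}K=f$ for every $f\in\F$, the map $\iota$ witnesses that $(K,\F)$ is isometrically equivalent to this fractal in $H$.

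I do not expect a genuine obstacle here; the two points requiring care are both mild. First, one must use that Kirszbraun preserves the Lipschitz constant \emph{exactly}, which is what keeps every extension $\bar f$ contracting and hence makes $\bar\F$ a legitimate system of contractions on $H$. Second, one must observe that the union $\bigcup_{f\in\F}\bar f(K)$ collapses back to $K$, which is precisely the defining identity $K=\bigcup_{f\in\F}f(K)$ of a fractal structure together with the agreement $\bar f{\restriction}K=f$. Everything else is the uniqueness of the attractor, already supplied by the Banach Contraction Principle recalled in Section~\ref{s1}.
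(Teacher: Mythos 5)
Your proposal is correct and follows exactly the route the paper intends: the paper omits a written proof but states that Theorem~\ref{t:isoH} follows from the Kirszbraun Theorem, via the same extension-of-contractions scheme as in the proof of Theorem~\ref{t:embU}, with Kirszbraun replacing Isbell's absolute $1$-Lipschitz extensor. Both the trivial ``only if'' direction and your verification that $K$ remains the attractor of the extended system $\bar\F$ match the intended argument.
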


Concerning the isometric embeddability of metric spaces into Hilbert spaces we have the following characterization.

\begin{proposition}\label{p:isoH} For a metric space $X$ with metric $d_X$ the following conditions are equivalent:
\begin{enumerate}
\item $X$ is isometric to a subset of a Hilbert space;
\item $\sum_{i,j=1}^nd^2_X(x_i,x_j)c_ic_j\le0$ for any points $x_1,\dots,x_n\in X$ and real numbers $c_1,\dots,c_n$ with $\sum_{i=1}^nc_i=0$;
\item $\sum_{i,j=1}^n\big(d_X^2(x_i^+,x_j^+)+d_X^2(x_i^-,x_j^-)-2d_X^2(x_i^+,x_j^-)\big)\le0$ for any points $x_1^+,\dots,x_n^+$ and $x_1^-,\dots,x_n^-$ in $X$.
\end{enumerate}
\end{proposition}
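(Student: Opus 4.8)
The plan is to establish the equivalence of (1) and (2) directly, and then to prove the equivalence of (2) and (3) by a combinatorial argument; the substantial work lies in the implication (2)$\Rightarrow$(1), which is the Hilbert-space realization of a metric of \emph{negative type}. The implication (1)$\Rightarrow$(2) is a short computation: suppose $X$ embeds isometrically in a Hilbert space via $x\mapsto v_x$, so that $d_X^2(x,y)=\|v_x-v_y\|^2=\|v_x\|^2+\|v_y\|^2-2\langle v_x,v_y\rangle$. For points $x_1,\dots,x_n$ and reals $c_1,\dots,c_n$ with $\sum_i c_i=0$, the two norm-square terms contribute $\sum_{i,j}\|v_{x_i}\|^2c_ic_j=\big(\sum_i\|v_{x_i}\|^2c_i\big)\big(\sum_j c_j\big)=0$ and likewise for the $\|v_{x_j}\|^2$ term, leaving
$$\sum_{i,j=1}^n d_X^2(x_i,x_j)c_ic_j=-2\Big\langle\sum_i c_iv_{x_i},\sum_j c_jv_{x_j}\Big\rangle=-2\Big\|\sum_i c_iv_{x_i}\Big\|^2\le0.$$

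For (2)$\Rightarrow$(1) I would use the construction of Schoenberg. Fix a base point $x_0\in X$ and define the symmetric kernel
$$K(x,y):=\tfrac12\big(d_X^2(x,x_0)+d_X^2(y,x_0)-d_X^2(x,y)\big).$$
The first key step is to check that $K$ is positive semi-definite, i.e. $\sum_{i,j}K(x_i,x_j)c_ic_j\ge0$ for arbitrary reals $c_1,\dots,c_n$ (with no constraint). Given such $c_i$, set $c_0:=-\sum_{i=1}^nc_i$, so that $\sum_{i=0}^nc_i=0$, and apply (2) to the enlarged tuple $x_0,x_1,\dots,x_n$; expanding and using $d_X(x_0,x_0)=0$, the resulting inequality rearranges exactly into $\sum_{i,j=1}^nK(x_i,x_j)c_ic_j\ge0$. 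Once $K$ is positive semi-definite, a pre-Hilbert space is obtained from the free real vector space on $X$ by declaring $\langle e_x,e_y\rangle:=K(x,y)$, factoring out the null space of this semi-inner product and completing; the assignment $\phi:x\mapsto[e_x]$ then satisfies $\langle\phi(x),\phi(y)\rangle=K(x,y)$. Since $K(x,x)=d_X^2(x,x_0)$, we obtain $\|\phi(x)-\phi(y)\|^2=K(x,x)+K(y,y)-2K(x,y)=d_X^2(x,y)$, so $\phi$ is the desired isometric embedding. I expect this implication to be the main obstacle, both in the bookkeeping that turns (2) into the positive semi-definiteness of $K$ and in the (standard but careful) passage from a positive semi-definite kernel to an honest Hilbert-space realization.

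Finally I would prove (2)$\Leftrightarrow$(3). For (2)$\Rightarrow$(3), apply (2) to the $2n$ points $x_1^+,\dots,x_n^+,x_1^-,\dots,x_n^-$ with coefficients $+1$ on the $+$-points and $-1$ on the $-$-points (which sum to $0$); grouping the resulting double sum into $(+,+)$, $(-,-)$ and $(\pm,\mp)$ blocks and using the symmetry of $d_X$ produces precisely the left-hand side of (3). For (3)$\Rightarrow$(2), I would first treat integer coefficients: listing each $x_i$ with $c_i>0$ with multiplicity $c_i$ among the $+$-points and each $x_i$ with $c_i<0$ with multiplicity $-c_i$ among the $-$-points (the two lists have equal length since $\sum_i c_i=0$), condition (3) for these repeated points unwinds into $\sum_{i,j}d_X^2(x_i,x_j)c_ic_j\le0$. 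Rational coefficients follow by clearing denominators, and arbitrary real coefficients with $\sum_i c_i=0$ follow because the quadratic form $(c_i)\mapsto\sum_{i,j}d_X^2(x_i,x_j)c_ic_j$ is continuous and the rational points are dense in the hyperplane $\{\sum_i c_i=0\}$; this last density/continuity step is the only delicate point in the equivalence of (2) and (3).
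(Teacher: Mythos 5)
Your proposal is correct, and on the equivalence $(2)\Leftrightarrow(3)$ it coincides with the paper's own argument: your $(2)\Rightarrow(3)$ is the same specialization to coefficients $\pm1$ on the doubled list of points, and your $(3)\Rightarrow(2)$ (integer coefficients by repeating points according to multiplicity, rational coefficients by clearing denominators, real coefficients by density of rational points in the hyperplane $\{\sum_i c_i=0\}$ together with continuity of the quadratic form) is exactly the paper's contrapositive argument read forwards. Where you genuinely diverge is the equivalence $(1)\Leftrightarrow(2)$: the paper does not prove it at all, but cites it as a known result (Schoenberg's criterion, \cite[8.5(ii)]{BL}), whereas you supply a full proof --- $(1)\Rightarrow(2)$ by expanding $d_X^2(x,y)=\|v_x\|^2+\|v_y\|^2-2\langle v_x,v_y\rangle$ and using $\sum_i c_i=0$, and $(2)\Rightarrow(1)$ via the kernel $K(x,y)=\tfrac12\big(d_X^2(x,x_0)+d_X^2(y,x_0)-d_X^2(x,y)\big)$, whose positive semi-definiteness follows from applying (2) to the enlarged tuple $x_0,x_1,\dots,x_n$ with $c_0=-\sum_{i\ge1}c_i$, followed by the standard completion of the free vector space on $X$ modulo the null space of the semi-inner product; your bookkeeping there is correct, including the identity $K(x,x)=d_X^2(x,x_0)$ which yields $\|\phi(x)-\phi(y)\|=d_X(x,y)$. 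The trade-off: your route is self-contained and exhibits the Hilbert-space realization explicitly, at the cost of a page of classical material; the paper's citation keeps the proposition short and concentrates the actual work on the reformulation (3), which is the only part needed later (for the inductive verification that ultrametric spaces embed isometrically into $\ell_2$).
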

  
\begin{proof} The equivalence $(1)\Leftrightarrow(2)$ is proved in \cite[8.5(ii)]{BL}.
To show that $(2)\Ra(3)$, apply (2) to the points $x_1=x_1^+$, \dots, $x_n=x_n^+$, $x_{n+1}=x_1^-$, \dots, $x_{2n}=x_n^-$ and numbers $c_1=\dots=c_n=1$ and $c_{n+1}=\dots=c_{2n}=-1$.

To prove that $(3)\Ra(2)$, assume that (2) does not hold and find points $x_1,\dots,x_n\in X$ and real numbers $c_1,\dots,c_n$ such that $\sum_{i=1}^nc_i=0$ and $\sum_{i,j=1}^nd_X(x_i,x_j)^2c_ic_j>0$. By the continuity of the arithmetic operations, we can assume that the numbers $c_1,\dots,c_n$ are rational. Multiplying these rational numbers by their common denominator, we can make them integer. Repeating each point $x_i$ \ $|c_i|$ times, we can assume that the numbers $c_1,\dots,c_n$ belong to the set $\{-1,1\}$. In this case the equality $\sum_{i=1}^nc_i=0$ implies that $n$ is even and hence $n=2k$ for some $k$. After a suitable permutation, we can assume that $c_i=1$ for $i\le k$ and $c_i=-1$ for $i>k$. Now put $x_i^+=x_i$ and $x_i^-=x_{i+k}$ for $i\le k$ and conclude that 
$$0<\sum_{i,j=1}^nd^2_X(x_i,x_j)c_ic_j=\sum_{i,j\le k}\big(d^2_X(x_i^+,x_j^+)+ d^2_X(x_i^-,x_j^-)-2d_X^2(x_i^+,x_j^-)\big),$$
which contradicts (3).
\end{proof}

A metric space $X$ is called an {\em ultrametric space} if its metric $d_X$ is an {\em ultrametric}, which means that it satisfies the strong triangle inequality
$$d_X(x,z)\le \max\{d_X(x,y),d(y,z)\}\mbox{ \ for all $x,y,z\in X$}.$$

Applying Proposition~\ref{p:isoH}, we obtain the following generalization of the embeddability result of Vestfrid and Timan \cite{VT}, \cite{VT2}.

\begin{corollary}\label{c:ultraH} Each ultrametric space $X$ is isometric to a subset of a Hilbert space.
\end{corollary}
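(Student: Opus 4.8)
The plan is to verify condition (2) of Proposition~\ref{p:isoH}, namely that $\sum_{i,j=1}^n d_X^2(x_i,x_j)c_ic_j\le 0$ for all points $x_1,\dots,x_n\in X$ and reals $c_1,\dots,c_n$ with $\sum_{i=1}^n c_i=0$. Since this is a statement about finitely many points, it suffices to control finite subconfigurations, and no separate argument is needed to pass from finite to infinite ultrametric spaces.

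First I would rewrite the squared distance by the layer-cake formula: for every $i,j$ one has $d_X^2(x_i,x_j)=\int_0^\infty 2t\,\mathbf{1}[d_X(x_i,x_j)>t]\,dt$. Substituting this and interchanging the finite sum with the integral gives
\[
\sum_{i,j=1}^n d_X^2(x_i,x_j)c_ic_j=\int_0^\infty 2t\Big(\sum_{i,j=1}^n\mathbf{1}[d_X(x_i,x_j)>t]\,c_ic_j\Big)\,dt,
\]
so it is enough to show that the inner sum is nonpositive for each fixed $t\ge 0$; the outer weight $2t$ is nonnegative and hence preserves the sign.

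The key step uses the strong triangle inequality. For fixed $t\ge 0$, the relation $i\sim_t j\iff d_X(x_i,x_j)\le t$ on the index set $\{1,\dots,n\}$ is an equivalence relation: reflexivity and symmetry are immediate, and transitivity is exactly the ultrametric inequality. Let $\mathcal B_t$ be the associated partition into blocks. Writing $\mathbf{1}[d_X(x_i,x_j)>t]=1-\mathbf{1}[i\sim_t j]$ and noting that $\sum_{i,j}\mathbf{1}[i\sim_t j]\,c_ic_j=\sum_{B\in\mathcal B_t}\big(\sum_{i\in B}c_i\big)^2$ splits along the blocks (pairs in different blocks contribute nothing), I obtain
\[
\sum_{i,j=1}^n\mathbf{1}[d_X(x_i,x_j)>t]\,c_ic_j=\Big(\sum_{i=1}^n c_i\Big)^2-\sum_{B\in\mathcal B_t}\Big(\sum_{i\in B}c_i\Big)^2.
\]
By the hypothesis $\sum_{i=1}^n c_i=0$ the first term vanishes, so the right-hand side equals $-\sum_{B\in\mathcal B_t}\big(\sum_{i\in B}c_i\big)^2\le 0$.

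Plugging this nonpositive bound back into the integral yields $\sum_{i,j}d_X^2(x_i,x_j)c_ic_j\le 0$, which is precisely condition (2), so Proposition~\ref{p:isoH} gives the isometric embedding of $X$ into a Hilbert space. The only place where ultrametricity enters — and the heart of the argument — is the transitivity of $\sim_t$, which makes each sublevel set of the distance a disjoint union of blocks and thereby turns the relevant quadratic form into a sum of squares; beyond isolating this observation I do not anticipate any real obstacle.
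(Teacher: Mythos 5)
Your proof is correct, but it takes a genuinely different route from the paper's. The paper verifies condition (3) of Proposition~\ref{p:isoH} by induction on $n$: after permuting indices so that $d_X(x_{n+1}^+,x_{n+1}^-)$ is the smallest of the cross-distances $d_X(x_i^+,x_j^-)$, the strong triangle inequality yields $d_X^2(x_i^+,x_{n+1}^+)\le d_X^2(x_i^+,x_{n+1}^-)$ and $d_X^2(x_i^-,x_{n+1}^-)\le d_X^2(x_i^-,x_{n+1}^+)$, and summing these bounds together with the inductive hypothesis closes the induction. You instead verify condition (2) directly, via the layer-cake identity $d_X^2=\int_0^\infty 2t\,\mathbf{1}[d_X>t]\,dt$ and the observation that for an ultrametric the relation $d_X(x_i,x_j)\le t$ is an equivalence relation on the index set, so that the kernel $\mathbf{1}[d_X\le t]$ is block-diagonal and the relevant quadratic form collapses to $-\sum_{B}\bigl(\sum_{i\in B}c_i\bigr)^2\le 0$ on the hyperplane $\sum_i c_i=0$. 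Your route has two advantages: it bypasses the equivalence $(2)\Leftrightarrow(3)$ of Proposition~\ref{p:isoH} entirely (only the classical criterion $(1)\Leftrightarrow(2)$ cited from \cite{BL} is needed, so condition (3) and its somewhat fiddly rational-approximation argument could be dropped if one adopted your proof), and it isolates the structural reason the result holds --- closed balls of a fixed radius partition an ultrametric space --- in a way that generalizes at no cost: replacing the weight $2t$ by $pt^{p-1}$ shows that every power $d_X^p$, $p>0$, satisfies the same inequality. What the paper's induction buys in exchange is that it operates purely with finite sums, requiring no integral representation or measurability remark, and it explains why condition (3) appears in Proposition~\ref{p:isoH} at all: it is precisely the form of the inequality that the minimal-pair induction can consume.
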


\begin{proof} Given any points $x_1^+,\dots,x_n^+$ and $x_1^-,\dots,x_n^-$ in $X$, we need to check the inequality (3) of Proposition~\ref{p:isoH}. This will be proved by induction on $n$. For $n=1$ the inequality (3) is trivially true. Assume that for some $n\in\IN$ we have proved that the inequality (3) is true for any points
$x_1^+,\dots,x_n^+$ and $x_1^-,\dots,x_n^-$ in $X$. Choose any points $x_1^+,\dots,x_{n+1}^+$ and $x_1^-,\dots,x_{n+1}^-$. After a suitable permutation, we can assume that $d^2_X(x_{n+1}^+,x_{n+1}^-)=\min_{i,j}d^2_X(x_i^+,x_j^-)$.
By the inductive assumption, $$\sum_{i,j\le n}\big(d_X^2(x_i^+,x_j^+)+d_X^2(x_i^-,x_j^-)-2d_X^2(x_i^+,x_j^-)\big)\le0.$$For every $i\le n$, the strong triangle inequality for the ultrametric $d_X$ implies
$$d_X^2(x_i^+,x_{n+1}^+)\le\max\{d_X^2(x_i^+,x^-_{n+1}), d_X^2(x_{n+1}^+,x^-_{n+1})\}=d_X^2(x_i^+,x_{n+1}^-)$$and
$$d_X^2(x_i^-,x_{n+1}^-)\le\max\{d_X^2(x_i^-,x^+_{n+1}), d_X^2(x_{n+1}^-,x^+_{n+1})\}=d_X^2(x_i^-,x_{n+1}^+).$$
Then
$$
\begin{aligned}
&\sum_{i,j\le n+1}\big(d_X^2(x_i^+,x_j^+)+d_X^2(x_i^-,x_j^-)-2d_X^2(x_i^+,x_j^-)\big)=\\
&=-2d_X^2(x_{n+1}^+,x_{n+1}^-)+\sum_{i,j\le n}\big(d_X^2(x_i^+,x_j^+)+d_X^2(x_i^-,x_j^-)-2d_X^2(x_i^+,x_j^-)\big)+\\
&+\sum_{i=1}^n\big(2d_X^2(x_i^+,x_{n+1}^+)+2d_X^2(x_i^-,x_{n+1}^-)-
2d_X^2(x_i^+,x_{n+1}^-)-2d_X^2(x^+_{n+1},x^-_i)\big)
\le0.
\end{aligned}
$$
\end{proof}

A metric fractal $(X,\F)$ will be called an {\em ultrametric fractal} if its underlying metric space $X$ is an ultrametric space. Theorem~\ref{t:isoH} and Corollary~\ref{c:ultraH} imply the following embeddability result.

\begin{theorem}\label{t:ultraH} Each ultrametric fractal is isometrically equivalent to a fractal in the Hilbert space $\ell_2$.
\end{theorem}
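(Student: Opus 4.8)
The plan is to simply chain together the two results just established. An ultrametric fractal is by definition a metric fractal $(X,\F)$ whose underlying space $X$ is an ultrametric space. First I would invoke Corollary~\ref{c:ultraH} to obtain an isometric embedding of $X$ into \emph{some} Hilbert space. Then I would feed this into the characterization of Theorem~\ref{t:isoH}, which says precisely that a metric fractal is isometrically equivalent to a fractal in a Hilbert space $H$ as soon as its underlying space embeds isometrically into $H$. This already yields an isometric equivalence to a fractal in a Hilbert space; the only remaining task is to replace the abstract Hilbert space by the concrete separable space $\ell_2$.

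To carry out that last replacement I would use the compactness of $X$. A compact metric space is separable, so its isometric image inside the ambient Hilbert space $H$ is a separable subset. Passing to the closed linear span of this image (after translating so that it contains the origin, if one wishes) produces a \emph{separable} closed linear subspace $H_0\subseteq H$ containing the copy of $X$. Every separable Hilbert space is linearly isometric either to $\ell_2$ or, in the finite-dimensional case, to some $\IR^d$, and the latter is in turn a closed subspace of $\ell_2$. Hence $X$ admits an isometric embedding directly into $\ell_2$, and applying Theorem~\ref{t:isoH} with $H=\ell_2$ gives the desired conclusion that $(X,\F)$ is isometrically equivalent to a fractal in $\ell_2$.

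I do not expect any serious obstacle here, since the statement is essentially a corollary of Theorem~\ref{t:isoH} and Corollary~\ref{c:ultraH}. The only point requiring a small argument, rather than a pure citation, is the reduction from an arbitrary Hilbert space to $\ell_2$; this is handled by the separability observation above, which exploits that the underlying space of a metric fractal is compact. Everything else is an immediate combination of the two cited results.
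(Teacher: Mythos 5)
Your proposal is correct and follows essentially the same route as the paper, which derives Theorem~\ref{t:ultraH} precisely by combining Theorem~\ref{t:isoH} with Corollary~\ref{c:ultraH}. Your extra paragraph on passing from an arbitrary Hilbert space to $\ell_2$ via compactness, separability and the closed linear span is a detail the paper leaves implicit, and you handle it correctly.
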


For bi-Lipschitz and equi-H\"older equivalences we have a bit weaker embeddability result.

\begin{theorem}\label{t:equiH} If the underlying metric space of a metric fractal $(K,\F)$ admits a (bi-Lipschitz) equi-H\"older embedding to a Hilbert space $H$, then for some $n\in\IN$, the fractal $(X,\F^{\circ n})$ is (bi-Lipschitz) equi-H\"older equivalent to a fractal in $H$. 
\end{theorem}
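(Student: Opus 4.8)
The plan is to transport the fractal structure $\F$ through the given embedding exactly as in the proof of Theorem~\ref{t:isoH}, but first passing to a sufficiently high iterate $\F^{\circ n}$ so that the maps induced on the image become genuine contractions. Let $\varphi\colon K\to H$ be an equi-H\"older embedding, so there are constants $c,C,\alpha>0$ with
$$c\cdot d_K(x,y)^\alpha\le\|\varphi(x)-\varphi(y)\|\le C\cdot d_K(x,y)^\alpha$$
for all $x,y\in K$; in the bi-Lipschitz case one simply has $\alpha=1$. For any $f\in\F^{\circ n}$ and any $u=\varphi(x)$, $v=\varphi(y)$ in $\varphi(K)$, I would apply the upper H\"older bound to $\varphi(f(x))-\varphi(f(y))$ together with the contraction estimate $\Lip(f)\le\Lip(\F)^n$, and the lower H\"older bound to $u-v$, obtaining
$$\|\varphi{\circ}f{\circ}\varphi^{-1}(u)-\varphi{\circ}f{\circ}\varphi^{-1}(v)\|\le \tfrac{C}{c}\,\Lip(\F)^{n\alpha}\,\|u-v\|,$$
so that $\Lip(\varphi{\circ}f{\circ}\varphi^{-1}{\restriction}\varphi(K))\le \tfrac{C}{c}\,\Lip(\F)^{n\alpha}$.

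The key observation is that although the distortion factor $C/c$ may exceed $1$, we have $\Lip(\F)<1$, so $\tfrac{C}{c}\,\Lip(\F)^{n\alpha}\to0$ as $n\to\infty$. Hence I would fix once and for all an $n\in\IN$ large enough that $\tfrac{C}{c}\,\Lip(\F)^{n\alpha}<1$. For this $n$, every map $\varphi{\circ}f{\circ}\varphi^{-1}$ with $f\in\F^{\circ n}$ is a contracting self-map of the subset $\varphi(K)\subset H$.

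Next I would extend these maps to all of $H$ by the Kirszbraun Theorem (Theorem~1.12 in \cite{BL}): each Lipschitz map $\varphi{\circ}f{\circ}\varphi^{-1}\colon\varphi(K)\to\varphi(K)\subset H$ admits an extension $\bar f\colon H\to H$ with the same Lipschitz constant, so $\bar f$ is contracting. Setting $\bar\F=\{\bar f:f\in\F^{\circ n}\}$ and using that $\F^{\circ n}$ is a fractal structure on $K$, that is $K=\bigcup_{f\in\F^{\circ n}}f(K)$, I would compute
$$\bigcup_{f\in\F^{\circ n}}\bar f(\varphi(K))=\bigcup_{f\in\F^{\circ n}}\varphi(f(K))=\varphi\Big(\bigcup_{f\in\F^{\circ n}}f(K)\Big)=\varphi(K),$$
so $\varphi(K)$ is the attractor of the contracting system $\bar\F$ in $H$. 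Since $\{\bar f{\restriction}\varphi(K):f\in\F^{\circ n}\}=\{\varphi{\circ}f{\circ}\varphi^{-1}:f\in\F^{\circ n}\}$ and $\varphi$ is an equi-H\"older (respectively bi-Lipschitz) homeomorphism onto its image, this exhibits $(K,\F^{\circ n})$ as equi-H\"older (bi-Lipschitz) equivalent to the fractal $(\varphi(K),\bar\F{\restriction}\varphi(K))$ in $H$.

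As for difficulty, I do not expect a serious obstacle: the heavy lifting is done entirely by Kirszbraun's extension theorem, and the only genuinely necessary idea is the passage from $\F$ to the iterate $\F^{\circ n}$. This step is exactly what allows the uncontrolled distortion constant $C/c$ of the H\"older embedding to be absorbed by the geometrically small factor $\Lip(\F)^{n\alpha}$; without it the induced maps on $\varphi(K)$ need not be contractions, which is precisely why the conclusion is stated for an iterate rather than for $\F$ itself.
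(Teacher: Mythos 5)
Your proposal is correct and follows essentially the same route as the paper: the paper proves exactly your conjugation estimate $\Lip(\varphi\circ f\circ\varphi^{-1})\le C\cdot\Lip(f)^\alpha$ as a standalone lemma, then (as you do) picks $n$ with $C\cdot\Lip(\F)^{n\alpha}<1$ and extends the conjugated maps to all of $H$ by the Kirszbraun theorem. Your identification of $\varphi(K)$ as the attractor of the extended system is the same concluding step the paper leaves implicit.
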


This theorem can be derived from Kirszbraun Theorem 1.12 in \cite{BL} and the following simple lemma.

\begin{lemma} For an equi-H\"older homeomorphism $\varphi:X\to Y$ of metric spaces $(X,d_X)$ and $(Y,d_Y)$,  there are positive real constants $C$ and $\alpha$ such that for any Lipschitz map $f:X\to X$ the map $\varphi\circ f\circ\varphi^{-1}:Y\to Y$ has Lipschitz constant $$\Lip(\varphi\circ f\circ\varphi^{-1})\le C\cdot \Lip(f)^\alpha.$$
\end{lemma}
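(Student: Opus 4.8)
The plan is to extract the equi-H\"older constants of $\varphi$ and then produce the desired bound by a single chain of three inequalities, with $\alpha$ taken to be exactly the H\"older exponent of $\varphi$. Concretely, since $\varphi\colon X\to Y$ is an equi-H\"older homeomorphism, I would first fix positive constants $c,C_1,\alpha$ with
$$c\cdot d_X(x,x')^\alpha\le d_Y(\varphi(x),\varphi(x'))\le C_1\cdot d_X(x,x')^\alpha\quad\text{for all }x,x'\in X.$$
I claim that the constant $C:=C_1/c$ together with this same exponent $\alpha$ witnesses the conclusion of the lemma.

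To see this, fix an arbitrary Lipschitz map $f\colon X\to X$ and two points $y,y'\in Y$, and set $x:=\varphi^{-1}(y)$, $x':=\varphi^{-1}(y')$. The core computation is then the estimate
$$d_Y\big(\varphi(f(x)),\varphi(f(x'))\big)\le C_1\cdot d_X\big(f(x),f(x')\big)^\alpha\le C_1\cdot\Lip(f)^\alpha\cdot d_X(x,x')^\alpha\le \frac{C_1}{c}\cdot\Lip(f)^\alpha\cdot d_Y(y,y'),$$
where the first inequality is the upper H\"older bound applied to the pair $f(x),f(x')$, the second uses the Lipschitz property $d_X(f(x),f(x'))\le\Lip(f)\,d_X(x,x')$ raised to the power $\alpha$, and the last uses the lower H\"older bound in the form $d_X(x,x')^\alpha\le \tfrac1c\,d_Y(\varphi(x),\varphi(x'))=\tfrac1c\,d_Y(y,y')$. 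Since $y,y'\in Y$ were arbitrary, taking the supremum of the ratio over $y\ne y'$ yields $\Lip(\varphi\circ f\circ\varphi^{-1})\le\frac{C_1}{c}\cdot\Lip(f)^\alpha$, as required.

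There is no genuine obstacle here; the only point that must be respected is that the equi-H\"older definition uses one and the same exponent $\alpha$ in the upper and lower estimates, so that raising the Lipschitz inequality to the power $\alpha$ matches the exponent produced when converting $d_X(x,x')$ back into $d_Y(y,y')$, and the two H\"older bounds combine into a clean power law rather than a bi-H\"older one. I would also remark that the constants $C$ and $\alpha$ depend only on $\varphi$ and not on $f$, which is exactly what is needed when this lemma is fed into Kirszbraun's extension theorem to prove Theorem~\ref{t:equiH}: choosing $n$ large enough that $C\cdot\Lip(\F)^{\alpha n}<1$ makes the conjugated maps $\{\varphi\circ f\circ\varphi^{-1}:f\in\F^{\circ n}\}$ contracting on $Y\subset H$, after which Kirszbraun provides contracting extensions to all of $H$ with the same Lipschitz constants.
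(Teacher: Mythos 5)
Your proof is correct and follows essentially the same route as the paper's: both chain the upper H\"older bound for $\varphi$, the Lipschitz estimate for $f$ raised to the power $\alpha$, and the lower H\"older bound to convert $d_X(x,x')^\alpha$ back into $d_Y(y,y')$, yielding a Lipschitz bound $C\cdot\Lip(f)^\alpha$ with $\alpha$ the H\"older exponent of $\varphi$. The only cosmetic difference is that the paper first records the H\"older inequality for $\varphi^{-1}$ with exponent $1/\alpha$ (obtaining the constant $C=c^2$), whereas you use the lower bound for $\varphi$ directly (obtaining $C=C_1/c$); these are the same computation.
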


\begin{proof} Since $\varphi$ is equi-H\"older, there are positive constants $c$ and $\alpha$ such that
$$\frac1c\cdot d_X(x,x')^\alpha\le d_Y(\varphi(x),\varphi(x'))\le c\cdot d_X(x,x')^\alpha$$ for all $x,x'\in X$.
This inequality implies that 
$$\frac1{c^{1/\alpha}}d_Y(y,y')^{1/\alpha}\le 
d_X(\varphi^{-1}(y),\varphi^{-1}(y'))\le
c^{1/\alpha}d_Y(y,y')^{1/\alpha}$$for all $y,y'\in Y$.

Take any Lipschitz map $f:X\to X$ and observe that for any points $y,y'\in Y$ we have
\begin{multline*}
d_Y(\varphi\circ f\circ\varphi^{-1}(y),\varphi\circ f\circ\varphi^{-1}(y'))\le
c\cdot d_X(f\circ\varphi^{-1}(y),f\circ\varphi^{-1}(y'))^\alpha\le\\
c\cdot(\Lip(f)\cdot d_X(\varphi^{-1}(y),\varphi^{-1}(y'))^\alpha\le
c\cdot\Lip(f)^\alpha (c^{1/\alpha}d_Y(y,y')^{1/\alpha})^{\alpha}=
c^2\cdot \Lip(f)^\alpha d_Y(y,y')
\end{multline*} 
and hence $\Lip(\varphi\circ f\circ\varphi^{-1})\le C\cdot \Lip(f)^\alpha$ for the constant $C:=c^2$.
\end{proof}

\section{Equi-H\"older embeddings of metric fractals into  Euclidean spaces}

Theorem~\ref{t:isoH} and \ref{t:equiH}  reduce the problem of embedding fractals into Hilbert spaces to the problem of embedding their underlying spaces into Hilbert spaces. In case of Euclidean spaces (i.e., finite-dimensional Hilbert spaces) we have a nice characterization of equi-H\"older embeddability, due to Assouad \cite{A}.

He proved that a separable metric space $X$ admits an equi-H\"older  embedding into a Euclidean space if and only if $X$ has the {\em doubling property}, which means that for some number $N\in\IN$, each subset $S\subset X$ can be covered by $\le N$ sets of diameter $\le\frac12\diam(S)$ (in fact, instead of the constant $\frac{1}{2}$, we can take any positive number $\lambda<1$ and obtain an equivalent definition). It is easy to see that each metric space with the doubling property is separable. The Asouad's characterization implies that the  doubling property is preserved by equi-H\"older equivalences of (separable) metric spaces.

We shall say that a metric fractal has {\em the doubling property} if its underlying metric space has the doubling property.
Combining the Assouad's characterization with Theorem~\ref{t:equiH} we get the following characterization.

\begin{theorem}\label{t:dE} A metric fractal $(K,\F)$ has the doubling property if and only if for some $n\in\IN$ the metric fractal $(K,\F^{\circ n})$ is equi-H\"older equivalent to a fractal in a Euclidean space.
\end{theorem}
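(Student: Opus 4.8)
The plan is to read the statement as a direct corollary of three facts already available: Assouad's characterization of equi-H\"older embeddability into Euclidean space \cite{A}, Theorem~\ref{t:equiH}, and the remark recorded above that the doubling property is invariant under equi-H\"older equivalences of separable metric spaces. No new construction is needed; both implications come from assembling these ingredients, and the only bookkeeping is to keep in mind that a Euclidean space $\IR^d$ is a finite-dimensional Hilbert space and is itself doubling.

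For the nontrivial direction, I would assume that $(K,\F)$ has the doubling property, that is, its underlying metric space $K$ is doubling (and hence separable). First I would invoke Assouad's theorem to obtain an equi-H\"older embedding of $K$ into some Euclidean space $\IR^d$. Since $\IR^d$ is a finite-dimensional Hilbert space, Theorem~\ref{t:equiH} applies with $H=\IR^d$ and produces an index $n\in\IN$ for which the fractal $(K,\F^{\circ n})$ is equi-H\"older equivalent to a fractal in $\IR^d$, which is exactly what is required.

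For the converse, suppose that for some $n\in\IN$ the metric fractal $(K,\F^{\circ n})$ is equi-H\"older equivalent to a fractal $(L,\G)$ in a Euclidean space $\IR^d$. By the definition of equi-H\"older equivalence this supplies an equi-H\"older homeomorphism $h:K\to L$, so the underlying space $K$ (the same compact metric space for the structures $\F$ and $\F^{\circ n}$) is equi-H\"older equivalent to the subspace $L\subset\IR^d$. Since $\IR^d$ has the doubling property and this property is inherited by subspaces, $L$ is doubling; and since the doubling property is preserved by equi-H\"older equivalences, $K$ is doubling as well. Hence $(K,\F)$ has the doubling property.

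The genuine content thus sits entirely in the cited results, and the deduction is routine. The one point I would be careful about, and where I would resist overclaiming, is the unavoidable passage from $\F$ to $\F^{\circ n}$: conjugating a single $f\in\F$ by the equi-H\"older embedding $\varphi$ only gives the estimate $\Lip(\varphi\circ f\circ\varphi^{-1})\le C\cdot\Lip(f)^\alpha$ of the lemma behind Theorem~\ref{t:equiH}, which may exceed $1$. It is only after iterating, where $\Lip$ of each composition is bounded by $\Lip(\F)^{n}$ and the conjugated constant by $C\cdot\Lip(\F)^{n\alpha}$, that all conjugated maps drop below $1$ and Kirszbraun's theorem can extend them to contractions of $\IR^d$; verifying that this is the mechanism Theorem~\ref{t:equiH} already encapsulates is the crux of the argument.
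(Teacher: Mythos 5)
Your proposal is correct and matches the paper's intended argument exactly: the paper gives no separate proof of Theorem~\ref{t:dE}, stating only that it follows by ``combining the Assouad's characterization with Theorem~\ref{t:equiH},'' which is precisely the assembly you carry out (Assouad plus Theorem~\ref{t:equiH} for the forward direction; doubling of $\IR^d$, inheritance by subspaces, and invariance under equi-H\"older equivalence for the converse). Your closing remark about the conjugation estimate $\Lip(\varphi\circ f\circ\varphi^{-1})\le C\cdot\Lip(f)^\alpha$ and the need to pass to $\F^{\circ n}$ before applying Kirszbraun is indeed the mechanism inside Theorem~\ref{t:equiH}, so nothing is missing.
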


For ultrametric fractals we can prove a much better embedding result. A self-map $f$ of a metric space will be called {\em $\e$-contracting} if $\Lip(f)\le\e$. 

\begin{theorem}\label{t:ultraE} Each ultrametric fractal $(X,\F)$ with the doubling property is equi-H\"older equivalent to a fractal in $\IR$. More precisely, for every $\e>0$ the fractal $(X,\F)$ is equi-H\"older equivalent to the attractor of a function system consisting of $|\F|$ many $\e$-contracting self-maps of the real line.
\end{theorem}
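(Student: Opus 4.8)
The plan is to realize $X$ directly as a Cantor‑type fractal on the real line, by presenting $X$ as the set of ends of a uniformly branching tree and then spreading that tree along $\IR$ with a resolution that decays geometrically. Write $N$ for the doubling constant of $X$ and rescale so that $\diam(X)=1$. The decisive preliminary choice is a real number $\mu\in\big(\max\{\tfrac12,\Lip(\F)\},1\big)$; that $\mu$ lies strictly above $\Lip(\F)$ is precisely what will let me keep $\F$ itself rather than passing to some power $\F^{\circ n}$, while $\mu\ge\tfrac12$ is what will bound the branching.

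First I would build the tree. For $k\in\w$ set $r_k=\mu^k$ and let $\mathcal{P}_k$ be the partition of $X$ into classes of the relation $x\sim_{r_k}y\Leftrightarrow d_X(x,y)\le r_k$; by the strong triangle inequality these classes are exactly the closed $r_k$-balls, they are clopen, and by compactness finitely many, refining as $k$ grows and separating points, so $X$ is the set of ends of $T=\bigsqcup_k\mathcal{P}_k$. Here the doubling property enters through the claim that every node has at most $N$ children: a level‑$k$ class has diameter $\le r_k$, hence is covered by $N$ sets of diameter $\le r_k/2$, and — exactly because $\mu\ge\tfrac12$ forces $r_{k+1}\ge r_k/2$ — no such covering set can meet two distinct level‑$(k+1)$ classes, since points of distinct classes lie at distance $>r_{k+1}$.

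Next I would lay the tree on the line. Fix $\beta\in(0,1/N)$ and assign to each level‑$k$ node $v$ a closed interval $J_v$ of length $\beta^k$, nested so that the $\le N$ child intervals sit disjointly inside $J_v$; their total length is $\le N\beta\cdot\beta^k<\beta^k$, so they fit with gaps of length $\ge\gamma\beta^k$, where $\gamma=\tfrac{1-N\beta}{N+1}$. Then $h(x):=\bigcap_k J_{v_k(x)}$ is a homeomorphism of $X$ onto $Y:=h(X)\subset\IR$. If $x\ne y$ first separate at level $k^*$, then $d_X(x,y)\in(r_{k^*+1},r_{k^*}]$ and $h(x),h(y)$ lie in distinct child intervals of a level‑$k^*$ node, whence $\gamma\beta^{k^*}\le|h(x)-h(y)|\le\beta^{k^*}$. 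Writing $\alpha:=\log_\mu\beta>0$ and using $\mu^{k^*}\asymp d_X(x,y)$ converts this into $\gamma\,d_X(x,y)^\alpha\le|h(x)-h(y)|\le\mu^{-\alpha}d_X(x,y)^\alpha$, so $h$ is equi‑H\"older.

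Finally I would conjugate and extend. For $f\in\F$ put $g_f:=h\circ f\circ h^{-1}:Y\to Y$; feeding $d_X(f(x),f(y))\le\Lip(f)\,d_X(x,y)$ into the two halves of the equi‑H\"older estimate gives
$$\Lip(g_f)\le\frac{N+1}{1-N\beta}\Big(\frac{\Lip(f)}{\mu}\Big)^{\alpha}.$$
The base $\Lip(f)/\mu\le\Lip(\F)/\mu<1$ is fixed and $<1$ by the choice of $\mu$, while the prefactor tends to $N+1$ as $\beta\to0$; so shrinking $\beta$ (equivalently, increasing $\alpha$) forces $\Lip(g_f)\le\e$ simultaneously for all $f\in\F$. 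Each real‑valued $\e$-Lipschitz $g_f$ extends, by the Kirszbraun theorem, to an $\e$-contracting self‑map $\bar g_f$ of $\IR$, and $Y=h(X)=\bigcup_{f\in\F}h(f(X))=\bigcup_{f\in\F}g_f(Y)$ shows $Y$ is the attractor of $\{\bar g_f:f\in\F\}$; thus $h$ witnesses the equi‑H\"older equivalence of $(X,\F)$ with the fractal $(Y,\{\bar g_f{\restriction}Y\})$ in $\IR$, built from $|\F|$ many $\e$-contracting maps. I expect the main obstacle to be exactly this coupling of constants: one must pin down $\mu$ strictly between $\Lip(\F)$ and $1$ (and $\ge\tfrac12$) at the outset, since this single parameter must at once guarantee bounded branching and a contraction base $\Lip(\F)/\mu<1$. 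Naive dyadic scales ($\mu=\tfrac12$) would reintroduce a factor $2^\alpha$ and collapse precisely when $\Lip(\F)\ge\tfrac12$, which is why the passage to scales $\mu^k$ with $\mu>\Lip(\F)$ is the heart of the argument.
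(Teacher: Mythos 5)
Your proposal is correct and is essentially the paper's own proof: the paper likewise decomposes the ultrametric space into closed balls at geometric scales $\lambda^n$ with $\lambda\in[\tfrac12,1)$, $\lambda\ge\Lip(\F)$, uses the doubling property together with $\lambda\ge\tfrac12$ to bound the branching of the resulting ball tree by the doubling constant, realizes that tree by nested closed intervals of length $\lambda^{n\alpha}$ separated by prescribed gaps, and then conjugates each $f\in\F$ by the resulting equi-H\"older embedding and extends to $\e$-contractions of $\IR$. The only divergence is bookkeeping of constants: the paper builds $\e$ directly into the gap condition $\dist(I_U,I_V)>\frac{\lambda^\alpha}{\e}\lambda^{n\alpha}$ (so $\lambda=\Lip(\F)$ is allowed, and $\e$-contractivity follows because images of scale-$n$ pairs land in a single level-$(n+1)$ ball), whereas you take $\mu$ strictly above $\Lip(\F)$ and shrink $\beta$ so that the generic conjugation bound $\frac{N+1}{1-N\beta}\bigl(\Lip(\F)/\mu\bigr)^\alpha$ drops below $\e$ --- an immaterial variation.
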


\begin{proof} We lose no generality assuming that the ultrametric space $X$ has diameter $\diam(X)\le 1$. Fix any real number $\lambda\in[\frac12,1)$ with $\Lip(\F)\le\lambda$. By the doubling property of $X$, there exists a constant $D$ such that each subset $S\subset X$ can be covered by $\le D$ subsets of diameter $\leq\frac12\, \diam(S)$.

For every $n\in\w$ let $\U_n$ be the family of closed balls of radius $\lambda^n$ in the ultrametric space $X$. The strong triangle inequality implies that for any distinct balls $U,V\in\U_n$ we have $\dist(U,V)>\lambda^n$ where $\dist(U,V)=\inf\{d_X(u,v):u\in U,\;v\in V\}$. Moreover, for any $V\in\U_{n+1}$ there exists a unique $W\in\U_n$ with $V\subset W$.

We claim that for any $n\in\w$ and $W\in\U_n$ the family $\U_{n+1}(W):=\{U\in\U_{n+1}:U\subset W\}$ has cardinality $|\U_{n+1}(W)|\le D$. Indeed, by the choice of $D$, the set $W$ has a cover $\mathcal C$ consisting of $\le D$ sets of diameter $\le\frac12\diam(W)\le\frac12\lambda^n\le\lambda^{n+1}$. Each set $U\in\U_{n+1}(W)$ intersects some set $C_U\in\mathcal\C$ and this set is unique as $\diam(C_U)\le\lambda^{n+1}<\dist(U,V)$ for any $V\in\U_{n+1}\setminus\{U\}$.  This uniqueness implies that $|\U_{n+1}(W)|\le|\C|\le D$.

Given any positive $\e<1$, choose a positive real number $\alpha\le 1$ such that $\lambda^\alpha<\frac{\e}{(1+\e)D}$.
For every $n\in\w$ and $W\in\U_n$ choose inductively a closed interval $I_W\subset\IR$ such that 
\begin{itemize}
\item[(a)] $\diam(I_W)=\lambda^{n\alpha}$;
\item[(b)] $I_U\subset I_W$ for any $U\in\U_{n+1}(W)$; 
\item[(c)] $\dist(I_U,I_V)>\frac{\lambda^\alpha}{\e}\lambda^{n\alpha}$ for any distinct sets $U,V\in\U_{n+1}(W)$.
\end{itemize}
For every $n\in\w$ and $W\in\U_n$ the choice of the family $(I_U:U\in\U_{n+1}(W)\}$ is always possible since $$\lambda^{(n+1)\alpha}(1+\tfrac1\e)|\U_{n+1}(W)|\le \lambda^{n\alpha}\big(\lambda^\alpha(1+\tfrac1\e) D\big)<\lambda^{n\alpha}=\diam (I_W).$$

Now consider the map $\varphi:X\to \IR$ assigning to each point $x\in X$ the unique point of the intersection $\bigcap\{I_U:x\in U\in \bigcup_{n\in\w}\U_n\}$. We claim that $\varphi$ is a bi-Lipschitz embedding of the ultrametric space $(X,d_X^\alpha)$ into the real line. 

Given any distinct points $x,y\in X$, find a unique number $n\in\w$ such that $\lambda^{n+1}<d_X(x,y)\le \lambda^n$. Then $x,y\in W$ for some $W\in\U_{n}$ and hence $$|\varphi(x)-\varphi(y)|\le\diam(I_W)=\lambda^{n\alpha}=\lambda^{-\alpha}\lambda^{(n+1)\alpha}<\lambda^{-\alpha}d_X^\alpha(x,y).$$

Since $d_X(x,y)> \lambda^{n+1}$, the points $x,y$ are contained in distinct sets $U,V\in \U_{n+1}(W)$. Now the condition (c) of the inductive construction ensures that
$$|\varphi(x)-\varphi(y)|\ge\dist(I_U,I_V)\ge \frac{\lambda^\alpha}{\e}\lambda^{n\alpha}\ge  \frac{\lambda^\alpha}{\e}d_X^\alpha(x,y).$$

Therefore, $$
 \frac{\lambda^\alpha}{\e}d_X^\alpha(x,y)\le \frac{\lambda^{(n+1)\alpha}}{\e}\le |\varphi(x)-\varphi(y)|\le \lambda^{n\alpha}<\lambda^{-\alpha}d_X^\alpha(x,y)$$
and the map $\varphi$ is a bi-Lipschitz embeding of the ultrametric space $(X,d_X^\alpha)$ into $\IR$ and an equi-H\"older embedding of the ultrametric space $(X,d_X)$ into $\IR$. 

Observe also that for any contraction $f\in\F$ and any points $x,y\in X$ with $\lambda^{n+1}<d_X(x,y)\le\lambda^n$ for some $n\in\w$, we have $d_X(f(x),f(y))\le\lambda\cdot d_X(x,y)\le \lambda^{n+1}$ and hence 
$$|\varphi\circ f(x)-\varphi\circ f(y)|\le \lambda^{(n+1)\alpha}\le\e\cdot|\varphi(x)-\varphi(y)|,$$
which implies that the map $\varphi\circ f\circ\varphi^{-1}:\varphi(X)\to\varphi(X)\subset\IR$ is $\e$-contracting and hence can be extended to an $\e$-contracting map $\bar f:\IR\to\IR$. Now we see that the family $\bar\F:=\{\bar f:f\in\F\}$  consists of $\e$-contractions of the real line and the ultrametric fractal $(X,\F)$ is equi-H\"older equivalent to the fractal $(\varphi(X),\bar\F{\restriction}\varphi(X))$ in the real line.\end{proof}

In Example~\ref{ex:Kam} we shall construct a simple ultrametric fractal which fails to have the doubling property and hence is not equi-H\"older equivalent to a fractal in a Euclidean space. Nonetheless we do not know the answer to the following open problem.

\begin{problem} Is each metric fractal homeomorphic to a metric fractal with the doubling property?
\end{problem}

\section{The Kameyama pseudometrics on topological fractals}

The problem of metrizability of topological fractals was considered by Kameyama \cite{Kam}. On each topological fractal $(X,\F)$ Kameyama defined a family $(p^\F_\lambda)_{\lambda<1}$ of continuous pseudometrics and proved that the topological fractal $(X,\F)$ is topologically equivalent to a metric fractal if and only if for some $\lambda<1$ the pseudometric $p^\F_\lambda$ is a metric, see Corollary 1.14 in \cite{Kam}.

To give the precise definition of the Kameyama pseudometrics $p^\F_\lambda$, we need to introduce some notation. For a topological fractal $(X,\F)$ let $\id_X:X\to X$ be the identity map of $X$ and $\F^{\circ \w}:=\bigcup_{n\in\w}\F^{\circ n}$ where $\F^{\circ 0}=\{\id_X\}$ and $\F^{\circ n}=\{f_1\circ\cdots\circ f_n:f_1,\dots,f_n\in\F\}$ for $n\in\IN$. For a self-map $f\in\F^{\circ\w}$ let $o(f)=\sup\{n\in\w:f\in\F^{\circ n}\}$.

The Kameyama pseudometric $p^\F_\lambda$ on $X$ is defined by the formula
$$p^\F_\lambda(x,y)=\inf_{f_1,\dots,f_n}\sum_{i=1}^n\lambda^{o(f_i)}$$where the infimum is taken over all sequences $f_1,\dots,f_n\in\F^{\circ\w}$ such that $x\in f_1(X)$, $y\in f_n(X)$ and $f_i(X)\cap f_{i+1}(X)\ne\emptyset$ for all $i<n$. In the definition of $p^\F_\lambda$ we assume that $\lambda^{o(f)}=0$ if $o(f)=\w$.

If $(X,\F)$ is a metric fractal and $\lambda\ge\Lip(\F)$, then $\diam(f(X))\le\lambda^{o(f)}\cdot\diam(X)$ for all $f\in\F^{\circ\w}$ and hence $d_X(x,y)\le p^\F_\lambda(x,y)\cdot\diam(X)$, which implies that the psudometric $p^\F_\lambda$ is a metric. On the other hand, if for some $\lambda<1$ the pseudometric $p^\F_\lambda$ is a metric, then it turns $(X,\F)$ into a metric fractal as each map $f\in\F$ has Lipschitz constant $\le\lambda$ with respect to the Kameyama metric $p^\F_\lambda$ (see Proposition 1.12 in \cite{Kam}).

Applying Theorem~\ref{t:dE} to this metric fractal, we obtain the following embeddability criterion.

\begin{corollary} Let $(X,\F)$ be a topological fractal. If for some $\lambda<1$ the Kameyama pseudometric $p^\F_\lambda$ on $X$ is a metric with the doubling property, then for some $n\in\IN$ the topological fractal $(X,\F^{\circ n})$ is topologically equivalent to a fractal in a Euclidean space.
\end{corollary}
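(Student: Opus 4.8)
The plan is to reduce the statement directly to Theorem~\ref{t:dE} by equipping $X$ with the Kameyama metric. First I would invoke the fact recalled in the paragraph just above the corollary (Proposition 1.12 in \cite{Kam}): once the pseudometric $p^\F_\lambda$ happens to be a genuine metric for some $\lambda<1$, it turns $(X,\F)$ into a metric fractal, since every $f\in\F$ is then Lipschitz with constant $\le\lambda<1$ with respect to $p^\F_\lambda$. Thus under the hypothesis the pair $(X,\F)$, with $X$ carrying the metric $p^\F_\lambda$, is a bona fide metric fractal in the sense of Section~\ref{s1}.

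Second, I would check that passing to the Kameyama metric does not alter the topology, so that \emph{topological equivalence} of this metric fractal and of the original topological fractal mean the same thing. This is the one place where the compactness of $X$ is used: the pseudometric $p^\F_\lambda$ is continuous on $X$, hence the identity map from $X$ with its original topology to $(X,p^\F_\lambda)$ is a continuous bijection from a compact space onto a Hausdorff (metric) space, and is therefore a homeomorphism. Consequently the topology induced by $p^\F_\lambda$ coincides with the original topology of $(X,\F)$.

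Third, the hypothesis asserts precisely that the metric $p^\F_\lambda$ has the doubling property, so the metric fractal just constructed has the doubling property in the sense of the previous section. Applying Theorem~\ref{t:dE} to it then produces an $n\in\IN$ for which the metric fractal $(X,\F^{\circ n})$ is equi-H\"older equivalent to a fractal in some Euclidean space $\IR^d$.

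Finally, I would note that an equi-H\"older equivalence is in particular a homeomorphism intertwining the two fractal structures, by the implication chain (equi-H\"older $\Ra$ bi-H\"older) displayed in Section~\ref{s1}; forgetting the metric inequalities leaves exactly a topological equivalence. Combining this with the identification of topologies from the second step, the topological fractal $(X,\F^{\circ n})$ is topologically equivalent to a fractal in $\IR^d$, as claimed. I expect no genuine obstacle: the entire weight of the argument is carried by Theorem~\ref{t:dE} and the Kameyama metrizability machinery, and the only point requiring a word of care is the topology-matching step, which is immediate from the compactness of $X$.
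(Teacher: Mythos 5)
Your proposal is correct and follows essentially the same route as the paper: invoke Kameyama's Proposition 1.12 to turn $(X,\F)$ equipped with the metric $p^\F_\lambda$ into a metric fractal with the doubling property, then apply Theorem~\ref{t:dE} and observe that equi-H\"older equivalence is in particular topological equivalence. Your explicit verification that $p^\F_\lambda$ induces the original compact topology (continuity of the pseudometric plus compactness of $X$) is a point the paper leaves implicit, but it is the same argument in substance.
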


Unfortunately, even for fractals in a Euclidean space the Kameyama metric needs not have the doubling property.

\begin{example}\label{ex:Kam} Fix any complex number $c$ such that $|c|=1$ and $c^n\ne 1$ for any $n\in\IN$. On the complex plane $\IC$ consider the system $\F=\{f_1,f_2,f_3\}$ consisting of three contracting self-maps, defined by the formulas $$f_1(z)=\tfrac12z,\;\;f_2(z)=\tfrac{1}{2}cz,\;\; f_3(z)=1\mbox{ for $z\in\IC$}.$$The attractor $X$ of the function system $\F$ coincides with the set
$$\{f(0):f\in\F^{\circ\w}\}=\{0\}\cup\{x_{n,k}:0\le k\le n<\w\}\quad\mbox{where}\quad x_{n,k}=\tfrac1{2^n}c^{k}.$$
\begin{figure}[h]
	\includegraphics[width=10cm]{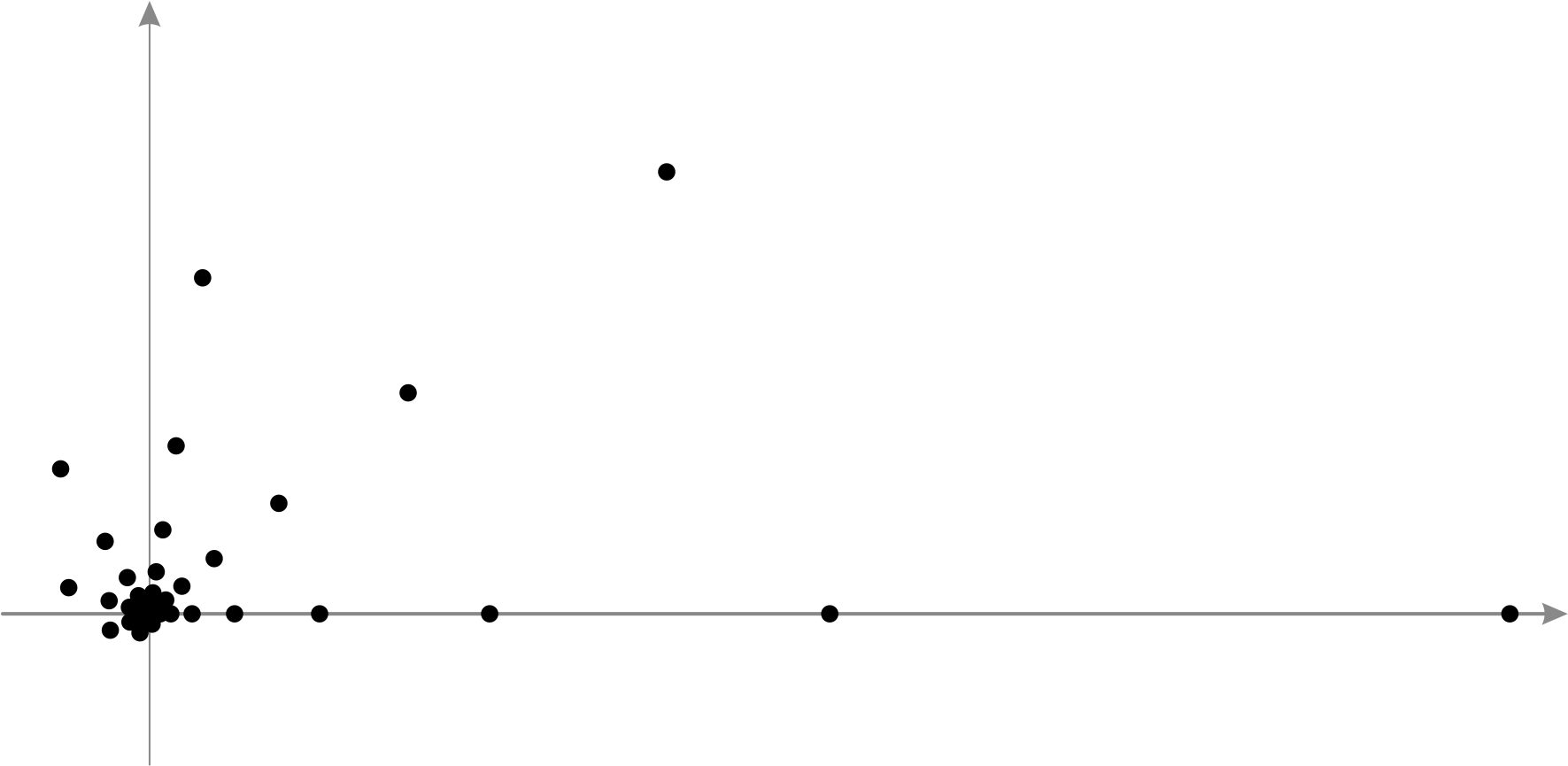}
\end{figure}

Observe that $\{f(X):f\in\F^{\circ\w}\}=\big\{\{x\}:x\in X\setminus\{0\}\big\}\cup\{X_{m,j}:0\le j\le m\}$ where $X_{m,j}=\{0\}\cup\{x_{m+n,k+j}:0\le k\le n\}=f_1^{m-j}\circ f_2^{j}(X)$. Looking at the definition of the Kameyama pseudometric $p^\F_\lambda$, we can see that for any distinct points $x_{n,l},x_{m,q}\in X\setminus\{0\}$ we have
$$p^\F_\lambda(0,x_{n,l})=\lambda^n\mbox{ and }\max\{\lambda^n,\lambda^m\}\le p^\F_\lambda(x_{n,l},x_{m,q})\le \lambda^n+\lambda^m\le 2\max\{\lambda^n,\lambda^m\},$$
which implies that $p^\F_\lambda$ is a metric. Moreover, for various $\lambda<1$ the Kameyama metrics are equi-H\"older equivalent.

To see that the metric $p^\F_\lambda$ does not have the doubling property, observe that for any $n\in\IN$ and distinct numbers $l,q\le n$ we have $\lambda^n<p^\F_\lambda(x_{n,l},x_{n,q})\le 2\lambda^n$, which implies that the set $S_n=\{x_{n,l}:0\le l\le n\}$ cannot be covered by less than $(n+1)$ sets of diameter $\le\frac12\,\diam(S_n)\le\lambda^n$.

Therefore the compact metric space $X_\lambda=(X,p^\F_\lambda)$ does not have the doubling property and the metric fractal $(X_\lambda,\F)$ is not equi-H\"older equivalent to a fractal in a Euclidean space.
On the other hand, the fractal $(X_\lambda,\F)$ is bi-Lipschitz equivalent to a fractal in the Hilbert space $\ell_2$.

This follows from Theorem~\ref{t:ultraH} as the metric $p^\F_\lambda$ is bi-Lipschitz equivalent to the ultrametric $u_\lambda$ on $X$ defined by
$$u_\lambda(0,x_{n,l})=u_\lambda(x_{n,l},0)=\lambda^n\mbox{ \ and \ }u_\lambda(x_{n,l},x_{m,q})=\max\{\lambda^n,\lambda^m\}$$for distinct points $x_{n,l},x_{m,q}\in X\setminus\{0\}$. Let $U_\lambda$ be the set $X$ endowed with the ultrametric $u_\lambda$. It is easy to see that each map $f\in\F$ remains contracting with respect to the ultrametric $u_\lambda$, so $(U_\lambda,\F)$ is a ultrametric fractal. By Theorem~\ref{t:ultraH}, the ultrametric fractal $(U_\lambda,\F)$ is isometrically equivalent to a fractal in $\ell_2$. Then the metric fractal $(X_\lambda,\F)$, being bi-Lipschitz equivalent to the ultrametric fractal $(U_\lambda,\F)$ is bi-Lipschitz equivalent to a fractal in $\ell_2$. On the other hand, the ultrametric space $U_\lambda$ does not have the doubling property (since this property is preserved by bi-Lipschitz homeomorphisms), so $U_\lambda$ is not equi-H\"older equivalent to a subset of a Euclidean space. 
\end{example}

\section{Topological ultrafractals} In this section we describe a class of topological fractals (called topological ultrafractals), for which the Kameyama pseudometrics $p^\F_\lambda$ are ultrametrics with the doubling property.
 
 A topological fractal $(X,\F)$ is called a {\em topological ultrafractal} if for any functions $f,g\in\F^{\circ\w}=\bigcup_{k\in\w}\F^{\circ k}$ one of the following holds:
 $$f(X)\cap g(X)=\emptyset\mbox{ or } f(X)\subset g(X)\mbox{  or }g(X)\subset f(X).$$ In this case the topological fractal structure $\F$ is called a {\em topological ultrafractal structure}.
 
Let us observe that the ultrametric fractal $(U_\lambda,\F)$ constructed in Example~\ref{ex:Kam} is not a topological ultrafractal.

 \begin{theorem}\label{t:Kam}  If $(X,\F)$ is a topological ultrafractal, then for every positive $\lambda<1$ the Kameyama pseudometric $p^\F_\lambda$ is a ultrametric with the doubling property and
 $$p^\F_\lambda(x,y)=\inf\{\lambda^{o(f)}:x,y\in f(X)\mbox{ for some }f\in\F^{\circ \w}\}$$for any $x,y\in X$.
 \end{theorem}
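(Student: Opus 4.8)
The plan is to establish the three assertions in the order: the displayed formula first, then the ultrametric inequalities, and finally the doubling property, which I expect to be the crux. Throughout I use that the ultrafractal hypothesis makes the family $\{f(X):f\in\F^{\circ\w}\}$ \emph{laminar}, i.e. any two members are nested or disjoint.

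Writing $q(x,y)$ for the right-hand side of the displayed formula, the inequality $p^\F_\lambda(x,y)\le q(x,y)$ is immediate, since whenever $x,y\in f(X)$ the one-term chain $f_1=f$ is admissible in the definition of $p^\F_\lambda$. For the reverse inequality I would take any admissible chain $f_1,\dots,f_n$ (so $x\in f_1(X)$, $y\in f_n(X)$ and $f_i(X)\cap f_{i+1}(X)\ne\emptyset$). Since consecutive images meet, they are nested; choosing an index $m$ for which $f_m(X)$ is maximal under inclusion among the finitely many $f_i(X)$, an outward induction shows every $f_i(X)\subseteq f_m(X)$ (each new set meets a set already known to lie in $f_m(X)$, hence meets $f_m(X)$, is comparable to it, and by maximality sits inside it). Thus $x,y\in f_m(X)$, so $q(x,y)\le\lambda^{o(f_m)}\le\sum_{i=1}^n\lambda^{o(f_i)}$, and taking the infimum over chains gives $q(x,y)\le p^\F_\lambda(x,y)$. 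Because $\id_X\in\F^{\circ0}$ with $\id_X(X)=X$, both quantities are always $\le1$, so all infima are well defined.

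The formula rewrites $p^\F_\lambda(x,y)=\lambda^{m(x,y)}$ with $m(x,y):=\sup\{o(f):x,y\in f(X)\}$ and $\lambda^\w:=0$. Symmetry is clear, and $p^\F_\lambda(x,x)=0$ follows since each $\F^{\circ n}$ is a fractal structure, so some $f\in\F^{\circ n}$ has $x\in f(X)$ and $\lambda^{o(f)}\le\lambda^n\to0$. For the strong triangle inequality I would fix $\e>0$ and pick $f,g$ with $x,y\in f(X)$, $y,z\in g(X)$ and $\lambda^{o(f)}\le p^\F_\lambda(x,y)+\e$, $\lambda^{o(g)}\le p^\F_\lambda(y,z)+\e$; as $f(X)$ and $g(X)$ share $y$ they are nested, and the larger one contains $x,y,z$, forcing $p^\F_\lambda(x,z)\le\max\{p^\F_\lambda(x,y),p^\F_\lambda(y,z)\}+\e$. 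Positive definiteness is where topological contraction enters: for $x\ne y$ in the compact Hausdorff space $X$ choose disjoint open $U\ni x$, $V\ni y$ and set $\U=\{U,V,X\setminus\{x,y\}\}$; topological contraction yields $n_0$ with $g(X)$ inside a single member of $\U$ for every $g\in\F^{\circ m}$, $m\ge n_0$, so no such $g(X)$ contains both $x$ and $y$, whence $m(x,y)<n_0$ and $p^\F_\lambda(x,y)>0$.

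For the doubling property the key step is to identify the closed balls of $p^\F_\lambda$ with cylinders. I would show that $\bar B(x,\lambda^n)=\bigcup\{f(X):x\in f(X),\ o(f)\ge n\}$ equals the maximal order-$n$ cylinder $M\ni x$ (a maximal element of $\{g(X):g\in\F^{\circ n}\}$ under inclusion): if some $f(X)$ in the union stuck out of $M$, then writing $f\in\F^{\circ m}$ with $m\ge n$ and factoring $f=f'\circ f''$ with $f'\in\F^{\circ n}$ gives $f(X)\subseteq f'(X)$, so $f'(X)\supseteq f(X)\supsetneq M$ would contradict maximality of $M$. These maximal order-$n$ cylinders are pairwise disjoint and cover $X$, hence are exactly the radius-$\lambda^n$ balls. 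Now each $M=g(X)$ satisfies $g(X)=\bigcup_{h\in\F}(g\circ h)(X)$, a union of $|\F|$ order-$(n+1)$ cylinders; since every radius-$\lambda^{n+1}$ ball inside $M$ is a maximal order-$(n+1)$ cylinder containing some $(g\circ h)(X)$, and distinct such balls are disjoint, $M$ splits into at most $|\F|$ balls of radius $\lambda^{n+1}$. Iterating $k$ levels shows every radius-$\lambda^n$ ball is a union of at most $|\F|^k$ balls of radius $\lambda^{n+k}$; choosing $k$ with $\lambda^{k-1}\le\frac12$ and noting that any set $S$ of diameter $d$ lies in a single radius-$\lambda^n$ ball when $\lambda^{n+1}<d\le\lambda^n$, we cover $S$ by at most $N:=|\F|^k$ sets of diameter $\le\lambda^{n+k}\le\frac12 d$, which is the doubling property. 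The main obstacle is precisely this last paragraph: the identification of metric balls with maximal cylinders through the factorization $f=f'\circ f''$, and the bounded-branching count that makes $N$ uniform in $n$; the formula and the ultrametric axioms, by contrast, are short once the laminar structure is in hand.
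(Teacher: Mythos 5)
Your proof is correct and follows essentially the same route as the paper's: the ultrafractal (laminar) structure collapses chains to a single image (the paper shortens a minimal-length chain, you absorb everything into a maximal one), nestedness of images sharing a point gives the strong triangle inequality, and the doubling property comes from covering a largest order-$k$ cylinder by its $|\F|$ children --- where the paper does this subdivision once and invokes the equivalent definition of doubling with ratio $\lambda$ in place of $\tfrac12$, while you iterate it to reach ratio $\tfrac12$ directly. Your explicit verification of positive definiteness (via the cover $\{U,V,X\setminus\{x,y\}\}$ and topological contraction) correctly fills in a step the paper only asserts.
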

 
 \begin{proof} The definition of $p^\F_\lambda$ implies that $$p^\F_\lambda(x,y)\le u_\lambda(x,y):=\inf\{\lambda^{o(f)}:x,y\in f(X)\mbox{ for some }f\in\F^{\circ\w}\}.$$
 Assuming that $p^\F_\lambda(x,y)<u_\lambda(x,y)$, we can find a sequence of functions $f_1,\dots,f_n\in\F^{\circ\w}$ such that $x\in f_1(X)$, $y\in f_n(X)$, $f_i(X)\cap f_{i+1}(X)\ne\emptyset$ for all $i<n$, and $\sum_{i=1}^n\lambda^{o(f_i)}<u_\lambda(x,y)$. We can assume that the number $n$ is the smallest possible. The definition of $u_\lambda(x,y)$ implies that $n>1$. Since $f_1(X)\cap f_2(X)\ne\emptyset$, either $f_1(X)\subset f_2(X)$ or $f_1(X)\supset f_2(X)$. In both cases we can replace the sequence $f_1,\dots, f_n$ by the shorter sequence $f_2,\dots,f_n$ or $f_1,f_3,\dots,f_n$, which contradicts the minimality of $n$.
 This contradiction shows that $p^\F_\lambda(x,y)=u_\lambda(x,y)$.
 
 Taking into account that the family $\F$ is topologically contracting, we can prove that $u_\lambda$ is a metric. To see that $u_\lambda$ is an ultrametric, take any pairwise distinct points $x,y,z\in X$. Since $u_\lambda(x,y)>0$, there exists a function $f\in\F^{\circ\w}$ such that $u_\lambda(x,y)=\lambda^{o(f)}$ and $x,y\in f(X)$. By the same reason, there exists a function $g\in\F^{\circ\w}$ with 
 $u_\lambda(y,z)=\lambda^{o(g)}$ and $y,z\in g(X)$. Since $(X,\F)$ is a topological ultrafractal and $y\in f(X)\cap g(X)\ne\emptyset$, either $f(X)\subset g(X)$ or $g(X)\subset f(X)$. In the first case we obtain $x,z\in f(X)\cup g(X)\subset g(X)$ and hence $u_\lambda(x,z)\le \lambda^{o(g)}\le \max\{u_\lambda(x,y),u_\lambda(y,z)\}$.  In the second case, $x,y\in f(X)$ and hence $u_\lambda(x,z)\le \lambda^{o(f)}\le \max\{u_\lambda(x,y),u_\lambda(y,z)\}$.
 
 Finally, we show that the ultrametric $u_\lambda$ has the doubling property. It suffices to check that any set $S\subset X$ can be covered by $|\F|$ many sets of diameter $\le\lambda\cdot\diam(S)$. We can assume that $S$ contains more than one point and find $k\in\w$ such that $\diam(S)=\lambda^k$. Fix any point $x\in S$ and observe that $\{f(X):f\in\F^{\circ k},\;x\in f(X)\}$ is a finite cover of $S$. Since $(X,\F)$ is a topological ultrafractal, this cover is linearly ordered and hence contains the largest element $f(X)$. Now we see that $S\subset f(X)=\bigcup_{g\in\F}f\circ g(X)$ and each set $f\circ g(X)$ has diameter $\le\lambda^{k+1}=\lambda\cdot\diam(S)$.
\end{proof}

Theorems~\ref{t:ultraE} and \ref{t:Kam} imply the following embedding result.

\begin{theorem}\label{f:t1} Any topological ultrafractal $(X,\F)$ is topologically equivalent to a fractal in the real line. More precisely, for any positive $\lambda<1$, the  topological ultrafractal $(X,\F)$ endowed with the Kameyama ultrametric $p^\F_\lambda$ is equi-H\"older equivalent to a fractal in the real line. 
\end{theorem}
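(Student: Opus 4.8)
The plan is to read off the statement as a direct combination of Theorem~\ref{t:Kam} and Theorem~\ref{t:ultraE}, with the only genuine work being to certify that the Kameyama ultrametric turns $(X,\F)$ into an ultrametric fractal satisfying the hypotheses of Theorem~\ref{t:ultraE}. So I would fix a topological ultrafractal $(X,\F)$ together with an arbitrary positive $\lambda<1$, and first apply Theorem~\ref{t:Kam} to conclude that the Kameyama pseudometric $p^\F_\lambda$ is in fact an ultrametric on $X$ and that the ultrametric space $(X,p^\F_\lambda)$ enjoys the doubling property.

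Next I would check that $\F$ is a fractal structure on the compact space $(X,p^\F_\lambda)$. Continuity of each $f\in\F$ and the covering identity $X=\bigcup_{f\in\F}f(X)$ are built into the definition of a topological fractal, so the only remaining point is that each $f\in\F$ contracts with respect to $p^\F_\lambda$. As recalled earlier, once $p^\F_\lambda$ is a metric it makes $(X,\F)$ a metric fractal with $\Lip(f)\le\lambda<1$ for every $f\in\F$ (Proposition~1.12 in \cite{Kam}); and since $p^\F_\lambda$ is a continuous metric on the compact Hausdorff space $X$, it is compatible with the topology, so $(X,p^\F_\lambda)$ is genuinely a compact metric space. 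Putting these together, $(X,\F)$ endowed with $p^\F_\lambda$ is a metric fractal whose underlying space is an ultrametric space with the doubling property, i.e.\ an ultrametric fractal with the doubling property.

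At this stage Theorem~\ref{t:ultraE} applies verbatim to the ultrametric fractal $\big((X,p^\F_\lambda),\F\big)$: for every $\e>0$ it is equi-H\"older equivalent to the attractor of a system of $|\F|$ many $\e$-contracting self-maps of $\IR$, that is, to a fractal in the real line. This already yields the quantitative ``more precisely'' clause. Finally, an equi-H\"older homeomorphism is in particular a homeomorphism, so equi-H\"older equivalence entails topological equivalence along the implication chain of Section~\ref{s1}; hence $(X,\F)$ is topologically equivalent to a fractal in the real line, which is the first assertion.

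I do not anticipate a real obstacle here, since the result is a clean corollary of the two preceding theorems. The one place requiring care is the bookkeeping that the input demanded by Theorem~\ref{t:ultraE}---an ultrametric fractal with the doubling property---is precisely what Theorem~\ref{t:Kam} delivers (ultrametricity and doubling of $p^\F_\lambda$) once it is combined with the Kameyama contraction estimate $\Lip(f)\le\lambda$ guaranteeing that $\F$ is still a fractal structure in the new metric.
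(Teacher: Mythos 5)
Your proposal is correct and follows exactly the paper's route: the paper derives Theorem~\ref{f:t1} as an immediate consequence of Theorems~\ref{t:Kam} and \ref{t:ultraE}, just as you do. The details you fill in (the Kameyama contraction estimate $\Lip(f)\le\lambda$ from Proposition~1.12 of \cite{Kam}, and the compatibility of the continuous metric $p^\F_\lambda$ with the compact topology) are precisely the facts the paper records in Section~5 and leaves implicit in its one-line proof.
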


\section{Strict topological ultrafractals} A topological fractal $(X,\F)$ is called a {\em strict topological ultrafractal} if its fractal structure $\F$ has a property that for any $k\in\w$, any functions $f,g\in\F^{\circ k}$ one of the following holds:
$$\mbox{$f(X)=g(X)$, \; $f(X)\cap g(X)=\emptyset$, \; $|f(X)|=1$ \; or \; $|g(X)|=1$.}
$$
Such a fractal structure $\F$ is called a {\em strict topological ultrafractal structure}.

\begin{proposition}\label{p:strict} Each strict topological ultrafractal $(X,\F)$ is a topological ultrafractal. 
\end{proposition}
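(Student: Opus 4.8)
The plan is to show that a strict topological ultrafractal $(X,\F)$ satisfies the defining condition of a topological ultrafractal, namely that for any $f,g\in\F^{\circ\w}$ with $f(X)\cap g(X)\ne\emptyset$ we have $f(X)\subset g(X)$ or $g(X)\subset f(X)$. The subtlety is that the strictness hypothesis is formulated only for functions of the \emph{same} order $k$ (i.e.\ $f,g\in\F^{\circ k}$), whereas the ultrafractal condition must hold for arbitrary $f,g\in\F^{\circ\w}=\bigcup_{k\in\w}\F^{\circ k}$, whose orders may differ. So the first move is to reduce the mixed-order case to the equal-order case by padding the shorter composition.

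First I would take $f\in\F^{\circ m}$ and $g\in\F^{\circ n}$ and assume without loss of generality that $m\le n$. Writing $f=h_1\circ\dots\circ h_m$ with each $h_i\in\F$, I would use the key structural fact that $X=\bigcup_{h\in\F}h(X)$ implies $f(X)=\bigcup\{f\circ h_{m+1}\circ\dots\circ h_n(X):h_{m+1},\dots,h_n\in\F\}$, so that $f(X)$ is the union of the sets $\tilde f(X)$ as $\tilde f$ ranges over all length-$n$ extensions of $f$ lying in $\F^{\circ n}$. Now both $g$ and each such extension $\tilde f$ belong to $\F^{\circ n}$, so the strict hypothesis applies to the pair $(\tilde f,g)$, giving one of the four alternatives $\tilde f(X)=g(X)$, $\tilde f(X)\cap g(X)=\emptyset$, $|\tilde f(X)|=1$, or $|g(X)|=1$.

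The main step is then a careful case analysis. If $|g(X)|=1$ or $|f(X)|=1$, the comparability $f(X)\subset g(X)$ or $g(X)\subset f(X)$ is immediate (a singleton meeting a set is contained in it, and a single point is always comparable). In the remaining case $g(X)$ is not a singleton, and I would argue that if $f(X)\cap g(X)\ne\emptyset$ then some extension $\tilde f$ has $\tilde f(X)\cap g(X)\ne\emptyset$; for that $\tilde f$ the strict alternative forces either $|\tilde f(X)|=1$ or $\tilde f(X)=g(X)$. The cleanest route is to show that \emph{every} extension $\tilde f$ meeting $g(X)$ in fact satisfies $\tilde f(X)\subset g(X)$: either it equals $g(X)$, or it is a singleton already contained in $g(X)$. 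Summing over all extensions meeting $g(X)$, and noting that extensions not meeting $g(X)$ contribute nothing to $f(X)\cap g(X)$, one concludes that the part of $f(X)$ meeting $g(X)$ lies inside $g(X)$; combined with $g(X)\subset f(X)$ (which holds whenever some extension equals $g(X)$) this should pin down comparability.

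The step I expect to be the main obstacle is ruling out the genuinely mixed situation where $f(X)$ overlaps $g(X)$ partially: some extensions equal $g(X)$ (forcing $g(X)\subset f(X)$) while others are disjoint from it, which would leave $f(X)\supsetneq g(X)$ and is consistent; the real danger is an extension that is a singleton sitting inside $g(X)$ together with another extension disjoint from $g(X)$, giving $f(X)\cap g(X)$ a proper nonempty piece with $f(X)\not\subset g(X)$ and $g(X)\not\subset f(X)$. To exclude this I would exploit that a singleton extension $\tilde f(X)=\{pt\}\subset g(X)$ together with $g(X)\not\subset f(X)$ must be reconciled using the topological contractivity of $\F$ and the fact that $g(X)=\bigcup_{h\in\F}(g\circ h)(X)$ refines compatibly; the point is to show that once $g(X)\subset f(X)$ fails, every extension meeting $g(X)$ must be a singleton, whence $f(X)\cap g(X)$ reduces to points of $g(X)$ that are isolated at level $n$, and a symmetric argument (now padding $g$ if needed, or invoking that $g(X)$ is itself a union over $\F$) forces $f(X)\subset g(X)$. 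Making this last reconciliation airtight, rather than the bookkeeping of extensions, is where the care is needed.
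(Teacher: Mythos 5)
Your reduction handles the easy branches correctly: if $|f(X)|=1$ or $|g(X)|=1$ comparability is immediate, and if some level-$n$ extension $\tilde f$ of $f$ has $\tilde f(X)=g(X)$, then $g(X)\subset f(X)$. But the remaining case --- the one you yourself flag --- is a genuine gap, not a matter of bookkeeping care. In that case every extension of $f$ meeting $g(X)$ is a singleton, some extension is disjoint from $g(X)$, and none equals $g(X)$. Then $f(X)\not\subset g(X)$ holds by construction (the disjoint extension has nonempty image), so your plan to ``force $f(X)\subset g(X)$ by a symmetric argument'' cannot succeed; what you must show instead is that this configuration together with $g(X)\not\subset f(X)$ is impossible. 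The tools you propose for this (decomposing $g(X)$ further as $\bigcup_{h\in\F}g\circ h(X)$, padding $g$, topological contractivity) cannot do it, because the bad configuration is consistent with the strictness alternative at every level $\ge n$: under further decomposition the singleton extensions stay singletons, pieces of the disjoint extension stay disjoint from pieces of $g(X)$, and no forced equality ever appears. Refining downward never yields a contradiction; the information you need lives at level $m$, \emph{above} $g$.

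The missing move is the opposite of padding: truncate the longer word, which is exactly what the paper does. Write $g=g_1\circ\dots\circ g_n$ and put $\tilde g:=g_1\circ\dots\circ g_m$, so that $g(X)\subset\tilde g(X)$ and $f,\tilde g\in\F^{\circ m}$. Since $\emptyset\ne f(X)\cap g(X)\subset f(X)\cap\tilde g(X)$, strictness applied to the single pair $(f,\tilde g)$ gives three possibilities: $f(X)=\tilde g(X)$, whence $g(X)\subset\tilde g(X)=f(X)$; or $|f(X)|=1$, whence $f(X)\subset g(X)$; or $|\tilde g(X)|=1$, whence $g(X)\subset\tilde g(X)$ is a singleton meeting $f(X)$ and so $g(X)\subset f(X)$. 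This three-line argument replaces your entire case analysis, and in particular settles your problematic case: there the truncation comparison forces $f(X)=\tilde g(X)$, hence $g(X)\subset f(X)$ after all. The structural fact to remember is that prefixes give supersets ($g(X)\subset\tilde g(X)$), which lets you compare a single set against a single set at a common level; padding gives a union of sets, and unions of sets each comparable with $g(X)$ need not be comparable with $g(X)$.
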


\begin{proof} Given distinct functions $f,g\in\F^{\circ \w}$ with $f(X)\cap g(X)\ne\emptyset$, we need to prove that $f(X)\subset g(X)$ or $g(X)\subset f(X)$.

Write $f$ and $g$ as $f=f_1\circ\dots\circ f_n$ and $g=g_1\circ\dots\circ g_m$ for some functions $f_1,\dots,f_n,g_1,\dots,g_m\in \F$. We lose no generality assuming that $n\le m$. Let $\tilde g=g_1\circ\dots\circ g_n$. If $f(X)=\tilde g(X)$, then $g(X)\subset \tilde g(X)=f(X)$ and we are done. So, assume that $f(X)\ne \tilde g(X)$. Taking into account that $(X,\F)$ is a strict topological ultrafractal and $\emptyset\ne f(X)\cap g(X)\subset f(X)\cap\tilde g(X)$, we conclude that $f(X)$ or $\tilde g(X)\supset g(X)$ is a singleton. In the first case $f(X)=f(X)\cap g(X)\subset g(X)$. In the second case $g(X)=g(X)\cap f(X)\subset f(X)$.
\end{proof}

Now we characterize zero-dimensional compact metrizable spaces, homeomorphic to (strict) topological (ultra)fractals. By \cite{BNS} (see also \cite{DS}), a zero-dimensional compact metrizable space $X$ is homeomorphic to a topological fractal if and only if $X$ either is uncountable or is countable and has non-limit scattered height.

Let us recall that a topological space $X$ is {\em scattered} if each subspace $A\subset X$ has an isolated point. The Baire Theorem guarantees that each countable complete metric space is scattered. The complexity of a scattered topological space can be measured by the ordinal $\hbar(X)$ defined as follows.

Let $X$ be a topological space. For a subset $A\subset X$, denote by $A^{(1)}$ the set of non-isolated points of $A$. Put $X^{(0)}=X$ and for every ordinal $\alpha$, define the $\alpha$-th derived set $X^{(\alpha)}$ by the recursive formula $$X^{(\alpha)}=\bigcap_{\beta<\alpha}\big(X^{(\beta)}\big)^{(1)}.$$
The intersection $X^{(\infty)}=\bigcap_{\alpha}X^{(\alpha)}$ of all derived sets has no isolated points and is called the {\em perfect kernel} of $X$. For a scattered topological space $X$, the perfect kernel $X^{(\infty)}$ is empty and the ordinal $\hbar(X)=\min\{\alpha\colon X^{(\alpha)} \mbox{ is finite}\}$ is called the {\em scattered height} of $X$. A scattered topological space $X$ is called {\em unital} if the set $X^{(\hbar(X))}$ is a singleton.

\begin{theorem} For a zero-dimensional compact metrizable space $X$ the following conditions are equivalent:
\begin{enumerate}
\item $(X,\F)$ is a topological fractal for some function system $\F$;
\item $(X,\F)$ is a strict topological ultrafractal for some function system $\F$ of cardinality $|\F|=3$;
\item for every $\e>0$, the space $X$ is homeomorphic to the attractor of a function system consisting of three $\e$-contractions of the real line;
\item $X$ is either uncountable or else $X$ is countable and has non-limit scattered height.
\end{enumerate}
\end{theorem}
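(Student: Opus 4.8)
The plan is to prove the cycle of implications $(4)\Ra(2)\Ra(3)\Ra(1)\Ra(4)$, noting that $(1)\Ra(4)$ is immediate from the characterization of \cite{BNS} quoted above (condition $(1)$ makes the zero-dimensional compact metrizable space $X$ homeomorphic to a topological fractal, namely itself). Three of the four arrows are essentially free. For $(3)\Ra(1)$, if $X$ is homeomorphic via $h$ to the attractor $A$ of a system $\G$ of $\e$-contractions of $\IR$, then $(A,\G{\restriction}A)$ is a metric fractal, hence a topological fractal, and transporting this structure gives the topological fractal structure $\{h^{-1}{\circ}g{\circ}h:g\in\G{\restriction}A\}$ on $X$. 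For $(2)\Ra(3)$, Proposition~\ref{p:strict} turns the strict topological ultrafractal $(X,\F)$ into a topological ultrafractal, so by Theorem~\ref{t:Kam} each Kameyama pseudometric $p^\F_\lambda$ with $\lambda<1$ is an ultrametric with the doubling property turning $(X,\F)$ into an ultrametric fractal whose maps are $\lambda$-Lipschitz; since $|\F|=3$, Theorem~\ref{t:ultraE} then shows that for every $\e>0$ this fractal is equi-H\"older equivalent (in particular homeomorphic) to the attractor of three $\e$-contractions of $\IR$, which is exactly $(3)$.

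Thus the whole theorem reduces to the implication $(4)\Ra(2)$: from the hypothesis that $X$ is either uncountable or countable of non-limit scattered height, I must manufacture a strict topological ultrafractal structure $\F=\{f_1,f_2,f_3\}$ on $X$. Since every condition is a topological invariant, I am free to replace $X$ by any convenient homeomorphic model. The guiding principle is a \emph{disjointness criterion for strictness}: suppose some of the maps, say $f_1,\dots,f_j$ with $j\le 3$, are injective on the clopen set $X_*:=\bigcup_{i\le j}f_i(X)$, have pairwise disjoint images, and satisfy $f_i(X_*)\subseteq X_*$, while the remaining $3-j$ maps are constant. Then for each word $w\in\{1,\dots,j\}^k$ the image $f_{w_1}{\circ}\dots{\circ}f_{w_k}(X)$ is a clopen "cylinder", distinct cylinders of the same length are disjoint, and every word using a constant map has a singleton image; hence $\F^{\circ k}$ consists of pairwise disjoint cylinders together with singletons, so the strict ultrafractal condition holds automatically, and topological contraction follows as soon as the cylinders shrink to points. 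So it suffices to realize $X$, up to homeomorphism, as a union of at most three disjoint clopen "self-similar cores" carried by injective contracting maps, together with finitely many exceptional points absorbed by constant maps.

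This realization splits into two cases. The uncountable case is guided by the model $Y=\{0,1\}^\w\sqcup\{p\}$: the two prepend maps $x\mapsto i^\frown x$ (which collapse the isolated point $p$ into the Cantor part $\{0,1\}^\w$) are injective on $X_*=\{0,1\}^\w$, have the two disjoint clopen halves $C_1,C_2$ as images, and together with $f_3=c_p$ they form a genuine strict ultrafractal with three maps; a general uncountable $X$ is assembled from such Cantor cores along a clopen Lusin scheme over the binary tree, its perfect kernel $P\cong\{0,1\}^\w$ supplying the self-similar structure. The countable case I would treat by induction on the non-limit scattered height $\hbar(X)$, modeling $X$ through the Mazurkiewicz--Sierpi\'nski classification as $\w^{\alpha}{\cdot}n+1$: the base case $\hbar(X)=0$ is a finite space handled by an explicit three-map system (a shift-towards-a-sink homeomorphism together with constant maps, the two smallest finite spaces needing separate trivial attention), and the successor step $\hbar(X)=\beta+1$ realizes $X$ as a convergent sequence of clopen copies of a height-$\beta$ space with a limit point $*$, using a copy-shifting injective map, a height-collapsing structural map recovering the scattered remainder, and the constant map $c_*$.

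The main obstacle is precisely this realization for general $(4)$-spaces, and it is genuinely delicate because three requirements must be met simultaneously within a budget of only three maps: (i) the homeomorphism type of $X$ must be matched exactly, including its isolated and scattered points; (ii) the non-constant maps must remain injective on $X_*$ with \emph{disjoint} images, which is exactly what upgrades a mere topological fractal to a strict ultrafractal and is what excludes overlapping examples such as the spiral of Example~\ref{ex:Kam}; and (iii) the non-self-similar part of $X$ must be confined to a finite remainder covered by the constant maps. Coordinating these while keeping $|\F|=3$ is the crux — in particular the naive recursion through the scattered hierarchy threatens to increase the number of maps, so the structural map in the successor step must be a single height-collapsing surjection that telescopes all lower scattered levels at once. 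The hypothesis of $(4)$ is exactly the condition under which the required self-embedding-plus-finite-remainder decomposition exists, which is why the same dichotomy from \cite{BNS} governing $(1)\Leftrightarrow(4)$ reappears as the engine of the construction.
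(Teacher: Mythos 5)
Your handling of the three easy arrows coincides with the paper's: $(1)\Leftrightarrow(4)$ is quoted from \cite{BNS}, $(3)\Ra(1)$ is trivial, and $(2)\Ra(3)$ follows from Proposition~\ref{p:strict} together with Theorems~\ref{t:Kam} and \ref{t:ultraE} (the paper packages the latter two as Theorem~\ref{f:t1}). But the whole content of the theorem is $(4)\Ra(2)$, and there your proposal has a genuine gap: the ``disjointness criterion'' you adopt as the engine of the construction provably cannot produce the required structures. Indeed, suppose $f_1,\dots,f_j$ are injective on $X_*=\bigcup_{i\le j}f_i(X)$ with pairwise disjoint images, the remaining maps are constant, and the cylinders shrink to points. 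Set $A_k:=\bigcup_{w\in\{1,\dots,j\}^k}f_{w_1}\circ\dots\circ f_{w_k}(X)$. Since $A_{k+1}=\bigcup_{i\le j}f_i(A_k)$ and $X\setminus A_1$ is finite, induction shows that each $X\setminus A_k$ is finite; being the complement of a compact set it is also open, hence consists of isolated points of $X$. Therefore every non-isolated point of $X$ lies in $A_\infty:=\bigcap_kA_k$, while disjointness and shrinking of the cylinders make the itinerary map a homeomorphism of $A_\infty$ onto $\{1,\dots,j\}^\w$. Consequently $X^{(1)}=A_\infty$ is a Cantor set (if $j\ge2$) or at most a single point (if $j\le1$): your criterion can only ever apply to finite spaces, to $\w+1$, and to Cantor sets with isolated points accumulating nowhere else. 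It excludes every countable compact space of scattered height $\ge 2$ (already $\w^2+1$) and uncountable spaces such as $2^\w\sqcup(\w+1)$ --- that is, precisely the generic instances of condition (4).

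Your fallback sketches implicitly abandon this criterion but are not proofs. In the countable case the ``height-collapsing structural map'' is a non-injective surjection onto a clopen piece, so strictness would require an entirely different argument that you never give; worse, the induction on $\hbar(X)$ is not well-founded where it matters: if $\hbar(X)=\beta+1$ with $\beta$ a limit ordinal (e.g.\ $X=\w^{\w+1}+1$), the clopen pieces have limit scattered height $\beta$ and hence, by the very equivalence being proved, carry no topological fractal structure at all, so there is no inductive hypothesis to apply to them; note also that a map constant on each piece has image homeomorphic to $\w+1$ and therefore cannot even cover a piece once $\beta\ge2$. The uncountable case (``assembled along a clopen Lusin scheme'') is asserted, not constructed. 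The paper takes a different and much shorter route: it quotes the proof of \cite[Lemma 3]{BNS}, which already yields two-map strict ultrafractal structures both for uncountable zero-dimensional $X$ and for countable \emph{unital} $X$ of non-limit height, and its only new ingredient is a product lemma stating that a strict ultrafractal structure $\F$ on $X$ induces one of cardinality $\le|\F|+1$ on $X\times D$ for any finite discrete $D$; this settles the non-unital countable case via the decomposition $X\cong Z\times D$. The unital/non-unital dichotomy --- which is exactly where the third map becomes necessary --- is absent from your proposal.
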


\begin{proof} The equivalence $(1)\Leftrightarrow(4)$ was proved in \cite{BNS}, $(2)\Ra(3)$ follows from Theorem~\ref{f:t1} and Proposition~\ref{p:strict}; the implication $(3)\Ra(1)$ is trivial. So, it remains to prove that $(4)\Ra(2)$. In \cite[Lemma 3]{BNS} (more precisely, in its proof) it was shown that a compact metrizable space $X$ has a strict  topological  ultrafractal structure $\F$ consisting of two maps if $X$ either zero-dimensional and uncountable or $X$ is countable and unital with non-limit scattered height. So, it remains to consider the case of a countable non-unital space $X$. In this case $X$ is homeomorphic to the product $Z\times D$ of a unital countable space $Z$ and a discrete space $D$ of cardinality $|D|=|X^{(\hbar(X))}|$. By the preceding case, the countable unital space $Z$ has a strict topological  ultrafractal structure $\F_2$ consisting of two maps. The following lemma implies that the space $X\cong Z\times D$ has a 
strict topological ultrafractal structure $\F$ consisting of three maps. 
\end{proof}

\begin{lemma} If a compact metrizable space $X$ has a strict  topological ultrafractal structure $\F$, then for any non-empty finite discrete space $D$ the space $Y:=X\times D$ has a strict topological ultrafractal structure of cardinality $\le |\F|+1$.
\end{lemma}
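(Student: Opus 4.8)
The plan is to build the strict topological ultrafractal structure on $Y = X\times D$ by combining the given structure $\F$ on $X$ with a single extra map that "cycles through" the discrete factor $D$, so as to glue the $|D|$ copies of $X$ into one connected fractal dynamics. Write $D=\{0,1,\dots,m\}$ with $m=|D|-1$, and fix a point $*\in X$ together with the maps in $\F=\{g_1,\dots,g_s\}$. First I would promote each $g\in\F$ to a self-map of $Y$ that acts as $g$ on the $X$-coordinate and collapses the $D$-coordinate to a single fixed value, say $0$: define $\hat g(x,i)=(g(x),0)$. Then I would add one further map $h$ whose role is to shift the discrete label, e.g. $h(x,i)=(\,c,\, \sigma(i))$ where $\sigma$ is a bijection of $D$ realizing a cyclic permutation and $c$ is a chosen constant point of $X$; the precise form of $h$ must be arranged so that its images under composition enumerate all the "slices" $X\times\{i\}$ while keeping the singleton/disjointness dichotomy intact.

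The key steps, in order: (1) verify that $Y=\bigcup_{f}f(Y)$ for the proposed family $\F_Y=\{\hat g_1,\dots,\hat g_s\}\cup\{h\}$ — this is a covering check, ensuring every slice $X\times\{i\}$ is hit, which is what forces the cyclic shift $h$ to have order covering all of $D$. (2) Confirm topological contractivity: since each $\hat g$ inherits contractivity from $\F$ on the first coordinate and flattens the second, and $h$ eventually drives all labels into the orbit structure, any infinite composition $f_0\circ f_1\circ\cdots$ shrinks to a singleton. (3) The heart of the matter is checking the strict ultrafractal dichotomy for the iterated family: for every $k$ and every pair $f,g\in\F_Y^{\circ k}$, show that $f(Y)=g(Y)$, or $f(Y)\cap g(Y)=\emptyset$, or one of the images is a singleton. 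Here I would analyze a word $w\in\F_Y^{\circ k}$ by its pattern of $\hat g$'s versus $h$'s: a block of $\hat g$'s composes to a map of the form $(x,i)\mapsto(\,\tilde g(x),0)$ for some $\tilde g\in\F^{\circ j}$, so its image is $\tilde g(X)\times\{0\}$, and the strict ultrafractal property of $\F$ on $X$ transfers the required trichotomy into the $X$-slice; interspersed $h$'s only change which slice $X\times\{i\}$ the image lands in, and two images in different slices are automatically disjoint.

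The main obstacle I anticipate is the bookkeeping in step (3): I must ensure that the images of two words of the same length $k$ really do fall into the clean trichotomy even when the words mix $\hat g$-blocks and $h$-shifts in different positions. The danger is that a word ending in an $\hat g$-block and another ending in an $h$ could produce images in the same slice that neither coincide, are disjoint, nor are singletons. To control this I would arrange $h$ so that after applying $h$ the $X$-coordinate is immobilized at a single point $c$ (making $h(Y)$, and more generally any word whose last letter is $h$, have image a singleton-per-slice, hence genuinely "small"), so that any word containing an $h$ in a position that survives to the outside either yields a singleton image or a full slice that is comparable to the others via the $\F$-dichotomy. The verification of contractivity in step (2) should be routine once $h$ is set up as a collapsing shift, and the covering in step (1) reduces to checking that the orbit of $0$ under $\sigma$ is all of $D$; these I would present briefly and spend the bulk of the argument on the trichotomy of step (3).
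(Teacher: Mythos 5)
Your high-level plan (lift each map of $\F$ slice-wise and add one extra map to handle the discrete factor) is the same as the paper's, but your concrete choice of the extra map $h(x,i)=(c,\sigma(i))$ breaks the construction in two independent ways, both caused by collapsing the wrong coordinate. First, the covering requirement fails: since $h$ crushes the $X$-coordinate to $c$, we get $h(Y)=\{c\}\times D$, so $\bigcup_j\hat g_j(Y)\cup h(Y)=(X\times\{0\})\cup(\{c\}\times D)$, which misses every point $(x,i)$ with $i\ne 0$ and $x\ne c$. This cannot be repaired by compositions: the identity $Y=\bigcup_{f\in\F_Y}f(Y)$ is required of the family itself, and in any case every word containing the letter $h$ has finite image (e.g.\ $\hat g_j\circ h(Y)=\{(g_j(c),0)\}$ and $h\circ\hat g_j(Y)=\{(c,\sigma(0))\}$), so the slices $X\times\{i\}$ with $i\ne 0$ can never be filled. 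Second, topological contractivity fails: since $\sigma$ is a cyclic permutation, $h$ maps $\{c\}\times D$ bijectively onto itself, hence $h^k(Y)=\{c\}\times D$ for all $k\ge 1$, and the infinite composition $h\circ h\circ\cdots$ does not shrink to a singleton when $|D|\ge 2$.

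The paper distributes the collapsing in exactly the opposite way, and both of its choices matter. Writing $\F=\{g_1,\dots,g_s\}$ and $D=\{0,1,\dots,m\}$ as you do, its extra map is a coordinate-preserving, non-cyclic shift: $h_0(x,i)=(x,i+1)$ for $i<m$ and $h_0(x,m)=(p_1,m)$, where $p_i$ denotes the unique fixed point of $g_i$. Then $h_0(Y)=X\times\{1,\dots,m\}$ covers exactly the slices your family misses, while $h_0^{m+1}(Y)$ is a singleton, which restores contractivity; a cyclic coordinate-preserving shift would destroy contractivity again, which is why the absorption at the top slice is needed. Moreover, the paper's lifts collapse the non-zero slices: $\tilde g_i(x,0)=(g_i(x),0)$ but $\tilde g_i(x,j)=(p_i,0)$ for $j\ne 0$. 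This second collapse is also essential, not cosmetic: if you kept your $\hat g_i(x,j)=(g_i(x),0)$ on all slices and only repaired $h$, the length-two words $\hat g_i\circ h_0$ and $\hat g_i\circ\hat g_j$ would have images $g_i(X)\times\{0\}$ and $g_ig_j(X)\times\{0\}$, which in general intersect, are distinct, and are not singletons, violating the strictness trichotomy; with the paper's version $\tilde g_i\circ h_0(Y)$ is the singleton $\{(p_i,0)\}$. With these two corrections your step (3) goes through essentially as you envisaged, the trichotomy of $\F$ transferring to $Y$ via the injectivity of $h_0^s$ restricted to $X\times\{0\}$.
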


\begin{proof} Write $D$ as $D=\{y_1,\dots,y_n\}$ for $n=|D|$. There is nothing to prove for $|D|=1$, so assume that $n> 1$. Replacing $\F$ by a suitable subfamily, we can assume that $X\ne\bigcup_{f\in \F'}f(X)$ for every proper subfamily $\mathcal F'\subset\F$. In this case the ultrafractality of $\F$ ensures that the sets $f(X)$, $f\in\F$, are pairwise disjoint. Write the family $\F$ as $\F=\{f_1,\dots,f_m\}$ where $m=|\F|$. Using the fact that the system $\F$ is topologically contracting, we can show that each map $f_i\in\F$ has a unique fixed point $x_i\in X$ (as was mentioned in Section~\ref{s1}, in such case $f_i$ is a weak contraction with respect to a suitable metric and hence $f_i$ has a unique fixed point by \cite{Edelstein}).

For every positive integer $i\le m$, consider the map $g_i:Y\to Y=X\times D$ defined by the formula
$$g_i(x,y)=\begin{cases}(f_i(x),y_1)&\mbox{if $y=y_1$}\\
(x_i,y_1)&\mbox{otherwise}.
\end{cases}
$$
Also consider the map $g_0:Y\to Y$ defined by $$g_0(x,y)=\begin{cases}
(x,y_{j+1})&\mbox{if $y=y_j$ for some $j<n$};\\
(x_1,y_n)&\mbox{if $y=y_n$},
\end{cases}
$$
It can be shown that $Y=\bigcup_{i=0}^m g_i(Y)$ and that the function system $\mathcal G=\{g_i\}_{i=0}^m$ is topologically contracting.

So, $\mathcal G$ is a fractal structure on $Y$. Let us show that this structure is a strict topological ultrafractal structure. Take any $k\in\IN$ and two functions $\varphi,\psi\in\G^{\circ k}$ such that $\varphi(Y)\ne \psi(Y)$ and $\varphi(Y)\cap \psi(Y)\ne\emptyset$. We should prove that $\varphi(Y)$ or $\psi(Y)$ is a singleton.
 Write $\varphi$ and $\psi$ as $\varphi=g_{p_1}\circ\dots\circ g_{p_k}$ and $\psi=g_{q_1}\circ\dots\circ g_{q_k}$ for some sequences $(p_1,\dots,p_k),(q_1,\dots,q_k)\in\{0,\dots,m\}^k$. If for some $i<k$ we have $p_i\ne0$ and $p_{i+1}=0$, then $g_{p_i}\circ g_{p_{i+1}}(Y)$ is a singleton and so is $\varphi(Y)$. Therefore, we can assume that there exists a non-negative integer numbers $s<k$ such that $p_i=0$ for all $i\le s$ and $p_i\ne 0$ for all $s<i\le k$ (in the case $s=0$, we assume that all $p_i$ are nonzero). If $s\ge n$, then the set $\varphi(Y)\subset g_0^s(Y)=\{(x_1,y_n)\}$ is a singleton and we are done. So, $s<n$.

By analogy we can assume that for some non-negative number $t\le k$ with $t<n$ we have $q_i=0$ for all $i\le t$ and $q_i\ne 0$ for all $t<i\le k$. Since $\varphi(X)\ne \psi(X)$, the sequences $(p_1,\dots,p_k)$ and $(q_1,\dots,q_k)$ are distinct and thus $\min\{s,t\}<k$. Without loss of generality, we can assume that $s\le t$ and hence $s<k$. Then $\varphi(Y)\subset g_0^s(X\times\{y_1\})\subset X\times\{y_{s+1}\}$. Assuming that $s<t$, we conclude that $\psi(Y)\subset g_0^t(Y)\subset X\times\{y_{t+1},\dots,y_n\}$ is disjoint with $\varphi(Y)$. This contradiction shows that $s=t<k$. Since the map $g_0^s|X\times\{y_1\}$ is injective (if $s=0$, then $g_0^0$ is the identity map of $Y$), the sets $g_{p_{s+1}}\circ\dots \circ g_{p_k}(Y)=f_{p_{s+1}}\circ \dots\circ f_{p_k}(X)\times\{y_1\}$ and 
$g_{q_{t+1}}\circ\dots \circ g_{q_k}(Y)=f_{q_{t+1}}\circ \dots\circ f_{q_k}(X)\times\{y_1\}$ are not disjoint and do not coincide. Since $\F$ is a strict  topological ultrafractal structure on $X$, one of these sets is a singleton. Then one of the sets $\varphi(Y)$ or $\psi(Y)$ is a singleton, too.
\end{proof}

\section{Recognizing Euclidean fractals among compact metrizable spaces}

In \cite{DH} Duvall and Husch proved that each finite-dimensional compact metrizable space $X$ containing an open subspace homeomorphic to the Cantor set is homeomorphic to a fractal in a Euclidean space. The following theorem (which is the main technical result of this paper) shows that the Cantor set in the result of Duvall and Husch can be replaced by any uncountable compact zero-dimensional space. 

\begin{theorem}\label{t:main} For a compact metrizable space $X$ containing an open uncountable zero-dimensional subspace $Z$, the following conditions are equivalent:
\begin{enumerate}
\item $X$ is finite-dimensional;
\item $X$ is homeomorphic to a fractal in a Euclidean space;
\item $X$ admits a topological fractal structure $\F$ consisting of 4 contractions such that for some $\lambda<1$ the Kameyama pseudometric $p^\F_\lambda$ is a metric with the doubling property. 
\end{enumerate}
\end{theorem}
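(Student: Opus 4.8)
I would prove the three conditions equivalent by establishing the cycle $(1)\Ra(3)\Ra(2)\Ra(1)$, in which only the first implication carries real content. The implication $(2)\Ra(1)$ is immediate: a fractal in a Euclidean space $\IR^d$ is a compact subset of $\IR^d$ and hence finite-dimensional, and since covering dimension is a topological invariant, any space homeomorphic to it is finite-dimensional. For $(3)\Ra(2)$ I would invoke the corollary derived from Theorem~\ref{t:dE}: if for some $\lambda<1$ the Kameyama pseudometric $p^\F_\lambda$ is a metric with the doubling property, then $(X,\F^{\circ n})$ is topologically equivalent to a fractal in a Euclidean space for some $n$, so in particular $X$ is homeomorphic to the underlying space of such a fractal.

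The heart of the matter is $(1)\Ra(3)$. First I would normalize the zero-dimensional part. Since $Z$ is open in the compact metrizable space $X$ it is a locally compact Polish space, and being uncountable it contains a copy of the Cantor set; shrinking this copy I can find $C\subset Z$ that is compact (hence closed in $X$) and relatively open (hence open in $X$), so that $C$ is a clopen-in-$X$ Cantor set and $X=Y\sqcup C$ with $Y:=X\setminus C$ a compact finite-dimensional space. I would then fix a topological embedding $X\subset\IR^d$ with $d=2\dim X+1$.

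The plan is to construct four contracting similarities $f_1,f_2,f_3,f_4$ of $\IR^d$ of a common ratio $\lambda\in(0,1)$, satisfying a uniform separation (open set) condition, whose attractor $K$ is homeomorphic to $X$. Some of the maps would reproduce, inside the room afforded by the open Cantor set $C\subset Z$, a self-similar Cantor skeleton, while a further map would inject a shrunken full copy of $X$; the arbitrary finite-dimensional remainder $Y$ would then be recovered as the union of the resulting infinite family of shrinking copies accumulating along the Cantor addresses. This is the Duvall--Husch absorption device, for which the finite-dimensionality of $X$ (so that $Y$ embeds in $\IR^d$ with room to spare) and the openness of $C$ (so that copies may be grafted without changing the topology) are both essential; a careful accounting, paralleling the three-map ultrafractal structures of the previous section together with one extra map for the finite-dimensional body, would keep the total number of maps equal to four. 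Transporting $\F=\{f_1,\dots,f_4\}$ to $X$ through the homeomorphism $X\cong K$ would give a topological fractal structure of cardinality four. Since the maps are similarities of a common ratio obeying the separation condition, each level-$n$ piece would have Euclidean diameter comparable to $\lambda^n$ and meet only a uniformly bounded number of other level-$n$ pieces; this bounded-neighbor property would show both that $p^\F_\lambda$ is a genuine metric (in fact bi-Lipschitz to the Euclidean metric on $K$) and that every set of $p^\F_\lambda$-diameter about $\lambda^n$ is covered by boundedly many pieces of diameter about $\lambda^{n+1}$, i.e. that $p^\F_\lambda$ has the doubling property.

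The hard part will be the geometric construction of the four similarities so that, at once, the attractor is homeomorphic to the prescribed $X$ and the uniform separation condition holds. Realizing an arbitrary finite-dimensional compactum $Y$ inside a self-similar scheme, using only the freedom provided by the open Cantor subspace $Z$, is delicate, and it must be carried out while keeping the number of neighboring pieces at each scale bounded. This last requirement is exactly what rules out the pathology of Example~\ref{ex:Kam}, where unboundedly many pieces of a common scale cluster near a single point and destroy the doubling property of the Kameyama metric, so the main effort will go into a placement of the grafted copies that forces them to glue up into a copy of $Y$ yet keeps every piece uniformly separated from all but boundedly many of its same-scale companions.
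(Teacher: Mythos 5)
Your cycle $(1)\Ra(3)\Ra(2)\Ra(1)$ is exactly the paper's skeleton, and your treatment of $(3)\Ra(2)$ and $(2)\Ra(1)$ is fine. The gap is at the very first step of $(1)\Ra(3)$: the claim that, after shrinking, one can find $C\subset Z$ that is compact, open in $X$, and homeomorphic to the Cantor set. This is false in general, and it is not a repairable detail --- it is precisely the gap between the Duvall--Husch theorem and the theorem being proved. An uncountable, locally compact, zero-dimensional Polish space need not contain any subset that is simultaneously open, compact and perfect. For example, let $K$ be the standard Cantor set, let $p_n$ be the midpoint of the $n$-th complementary interval of $K$, and take $X=Z=K\cup\{p_n:n\in\IN\}$. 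This space is compact, metrizable, zero-dimensional and uncountable, so it satisfies the hypothesis of the theorem (with $Z=X$, which is trivially open); but its isolated points $p_n$ are dense in it, so every nonempty open subset contains an isolated point and hence no clopen subspace of $X$ is a Cantor set. Your decomposition $X=Y\sqcup C$, and with it the entire grafting construction of the four similarities, cannot even get started on this space. The paper states explicitly that its purpose is to replace the open Cantor set in Duvall--Husch by an arbitrary uncountable zero-dimensional open subspace, so a proof that begins by re-extracting an open Cantor set proves only the 1992 result, not the theorem at hand.

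The paper's way around this is the essential new idea your proposal is missing: it keeps the whole compact open zero-dimensional $Z$ (only shrinking it to be compact, open and uncountable, which \emph{is} always possible), and deals with the scattered part of $Z$ by a retraction device rather than by discarding it. Concretely, $Z$ is embedded into $K\times K$ so that its perfect kernel $Z^{(\infty)}$ becomes the set $K\times S_0$, and three self-maps $f_0,f_1,f_2$ of $Z$ are defined by explicit formulas on $Z^{(\infty)}$ and extended over the scattered points by composing with retractions $\ret:K\times K\to Z$, $\underline{\ret}$ and $\underline{\ret}^\gamma$; this yields a \emph{strict topological ultrafractal} structure on $Z$, whose Kameyama pseudometric is automatically an ultrametric with the doubling property by Theorem~\ref{t:Kam}. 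A fourth map $f_3:Z\to Y=X\setminus Z$, built from a tree of dyadic cubes covering $Y\subset[0,1]^d$, absorbs the finite-dimensional remainder, and the metric and doubling properties of $p^{\F}_\lambda$ (for $\lambda^d\ge\frac12$) are then verified combinatorially from the list of possible images $f(X)$, $f\in\F^{\circ\w}$, not from a geometric open-set condition on similarities in $\IR^d$. Note also that condition (3) only asks for topological contractions on $X$ itself; nothing requires realizing them as similarities, and the paper never does --- the passage to an actual fractal in a Euclidean space happens only afterwards, through Theorem~\ref{t:dE}.
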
 

\begin{proof} The implication $(3)\Ra(2)$ follows from Theorem~\ref{t:dE} and $(2)\Ra(1)$ is trivial. It remains to prove that $(1)\Ra(3)$. 

So, assume that the space $X$ is finite-dimensional. By our hypothesis, the space $X$ contains an open uncountable zero-dimensional subspace $Z$. Being uncountable, locally compact and zero-dimensional, the space $Z$ contains an uncountable open compact zero-dimensional subspace. Replacing $Z$ by this subspace, we can additionally assume that the open subspace $Z$ of $X$ is compact and its complement $X\setminus Z$ is not empty. 

Now the idea of further proof is as follows. First we construct a special embedding of the compact zero-dimensional space $Z$ into the square $K\times K$ of the standard Cantor set $K$ such that $K\times\{0\}\subset Z$. The product structure of $K\times K$ will help us to define a topological ultrafractal structure $\F_Z=\{f_0,f_1,f_2\}$ on $Z$ such that for every $\lambda<1$ the Kamyeama ultrametric $p^{\F_Z}_\lambda$ of $(Z,\F_Z)$ induces a standard ultrametric on the Cantor set $K\times\{0\}\subset Z$.

Then we embed the finite-dimensional space $Y:=X\setminus Z$ into a Euclidean space $\IR^d$ and using the partition of $\IR^d$ into cubes,  construct a special surjective map $f_3:Z\to Y$. We shall extend the maps $f_i$, $i\in\{0,1,2,3\}$ to continuous self-maps $\bar f_i$ of $X$ such that $\bar f_i(Y)$ is a singleton and obtain a topological fractal structure $\F=\{\bar f_i\}_{i=0}^3$ on $X$ whose Kameyama pseudometric $p^{\F}_\lambda$ is a metric with doubling property for any $\lambda<1$ with $\lambda^d\ge\frac12$. 

For convenience of the reader, our subsequent proof is divided into 8 steps. 
At the initial two steps we just introduce some notations, necessary for proper handling the Cantor set $K$ and subsets of its square.
\smallskip

\noindent{\bf Step 1: Some notations related to the Cantor cube $2^\w$.} 
 
 By $\w$ we denote the smallest infinite ordinal, which can be identified with the set of non-negative integer numbers. For a non-empty set $A$ by $A^{<\w}:=\bigcup_{n\in\w}A^n$ we denote the set of all finite sequences of elements of the set $A$.
 
For a finite sequence $s=(s_0,\dots,s_{n-1})\in A^n\subset A^{<\w}$ by $|s|$ we denote its length $n$. 
The set $A^0$ is a singleton consisting of the empty sequence, which has length $0$. 

The countable power $A^\w$ consists of infinite sequences {(of length $\w$)} in $A$. 
For an infinite sequence $s=(s_k)_{k\in\w}\in A^\w$ and a number $n\in\IN$ let $s{\restriction}n:=(s_0,\dots,s_{n-1})$ be the restriction of $s$ to the set $n=\{0,\dots,n-1\}$. Similarly we define $s{\restriction}n$ for finite sequence $s$ of length  $|s|\geq n$. 

For two finite sequences $s=(s_0,\dots,s_{n})$ and $t=(t_0,\dots,t_{m})$ by $st:=(s_0,\dots,s_{n},t_0,\dots,t_{m})$ we denote their concatenation.  
 
By $2$ we denote the doubleton $\{0,1\}$. Elements of the set $2^{<\w}\cup 2^\w$ will be called {\em binary sequences}. For $n\in\IN$ by $0^n$ we denote the unique sequence $(0,\dots,0)\in\{0\}^n\subset 2^n$. So, $0^n1$ is the sequence $(0,\dots,0,1)\in 2^{n+1}$ which will be written as $0\dots01$. This is our general rule: writing binary sequences we shall omit commas and parentheses. So, for example, $100001$ will denote the binary sequence $(1,0,0,0,0,1)$.  

For a binary sequence $\alpha\in 2^{<\w}\cup 2^\w$ of length $l\in\w\cup\{\w\}$ let $$\lfloor\alpha\rfloor:=(\alpha_{2n})_{0\le2n<l}\;\;\mbox{and}\;\;\lceil\alpha\rceil:=(\alpha_{2n+1})_{0<2n<l}$$
be the {\em even} and {\em odd parts} of $\alpha$. It is clear that the sequence $\alpha$ can be uniquely recovered from the pair $(\lfloor\alpha\rfloor,\lceil\alpha\rceil)$. If $\alpha$ has finite length, then $$\big|\lceil\alpha\rceil\big|\le\big|\lfloor\alpha\rfloor\big|\le\big|\lceil\alpha\rceil\big|+1.$$

For $i\in\IN$ by ${}^i\!\lfloor\cdot\rfloor\!^i$ we shall denote the $i$-th iteration of the operation $\lfloor\cdot\rfloor$ on $2^{<\w}$. {For example,  ${}^1\!\lfloor 110010011\rfloor\!^1=10101$, ${}^2\!\lfloor 110010011\rfloor\!^2=111$, ${}^3\!\lfloor 110010011\rfloor\!^3=11$ and ${}^i\!\lfloor 110010011\rfloor\!^i=1$ for all $i\ge 4$.}
\smallskip

\noindent{\bf Step 2: Some notations related to the Cantor set $K$ and its square $K\times K$.} 

Consider the topological embedding $x_{(\cdot)}:2^\w\to[0,1]$ assigning to each infinite sequence $\alpha\in 2^\w$ the real number $x_\alpha:=\sum_{n=0}^\infty \frac{2\alpha_n}{3^{n+1}}$. The image $K:=\{x_\alpha:\alpha\in 2^\w\}$ is nothing else but the standard Cantor set in $[0,1]$. 

For any finite binary sequence $\alpha\in 2^{<\w}$ let $$K_\alpha:=\big\{x_\beta:\beta\in 2^\w,\;\beta{\restriction}|\alpha|=\alpha\}$$be a basic closed-and-open subset of $K$. Let $x_\alpha:=x_{\alpha0^\w}$ denote the smallest point of the compact set $K_\alpha$. 

Given a finite binary sequence $\gamma\in 2^{<\w}$ of length $|\gamma|$, consider the rectangle
$$\underline{K}^\gamma:=K_\gamma\times K_{0^{|\gamma|}}$$and its upper subrectangle
$$
K^\gamma:=K_{\gamma}\times K_{0^{|\gamma|} 1}.
$$
In particular, for $\gamma=\emptyset\in 2^0$, we get $$\underline{K}^\emptyset=K_\emptyset\times K=K\times K=K\times (K_0\cup K_1)\mbox{ \ and \ }K^\emptyset:=K_\emptyset\times K_{1}=K\times K_{1}.$$

It is easy to see that $K\times K=(K\times\{0\})\cup\bigcup_{\gamma\in 2^{<\w}}K^\gamma$.

\begin{figure}[h]
	\includegraphics[width=0.8\textwidth]{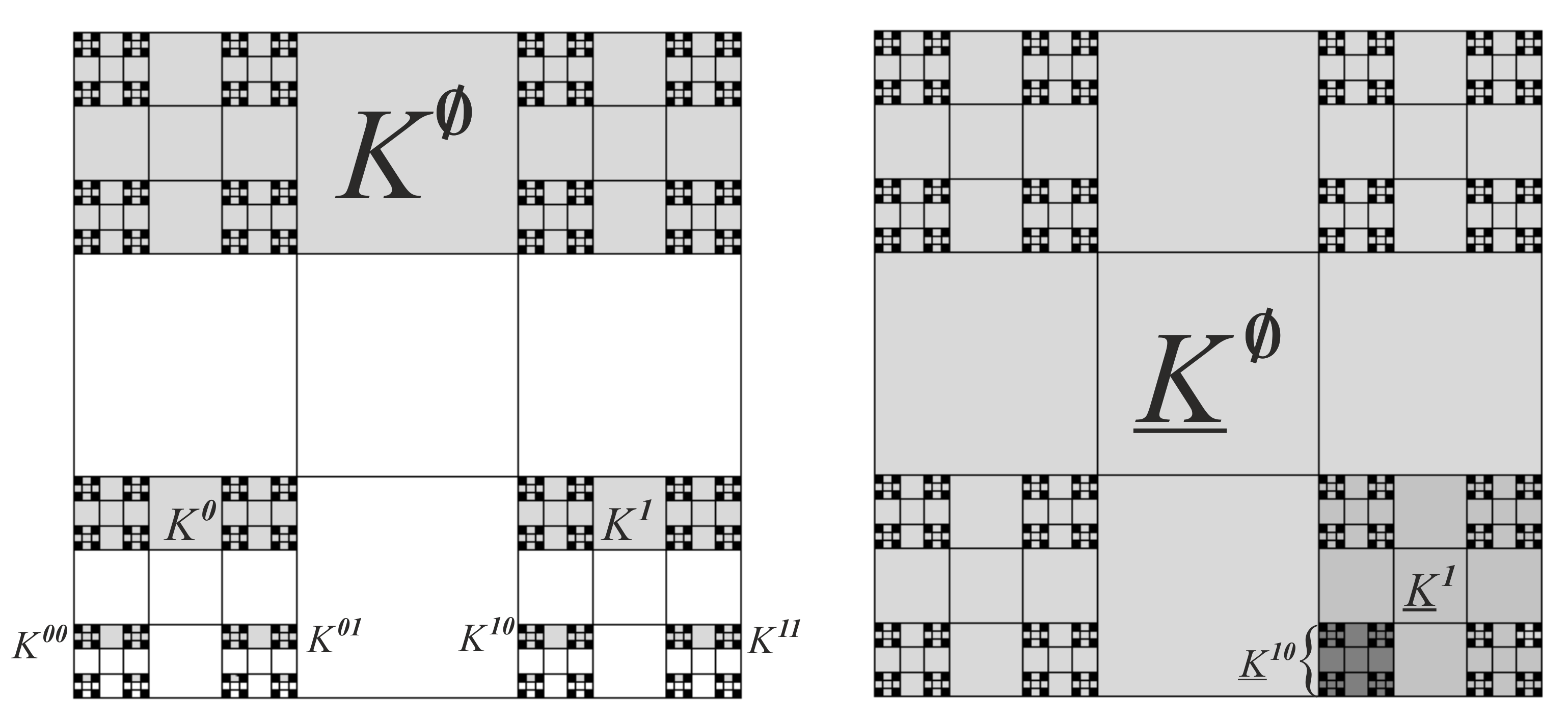}
\end{figure}

\smallskip

For every finite binary sequences $\alpha,\beta\in2^{<\w}$ of length $|\beta|\le|\alpha|\le|\beta|+1$, consider the subrectangle $$
K^\gamma_{\alpha,\beta}:=K_{\gamma \alpha}\times K_{0^{|\gamma|} 1 \beta}.
$$
of the rectangle $K^\gamma$.

\begin{figure}[h]
	\includegraphics[width=0.8\textwidth]{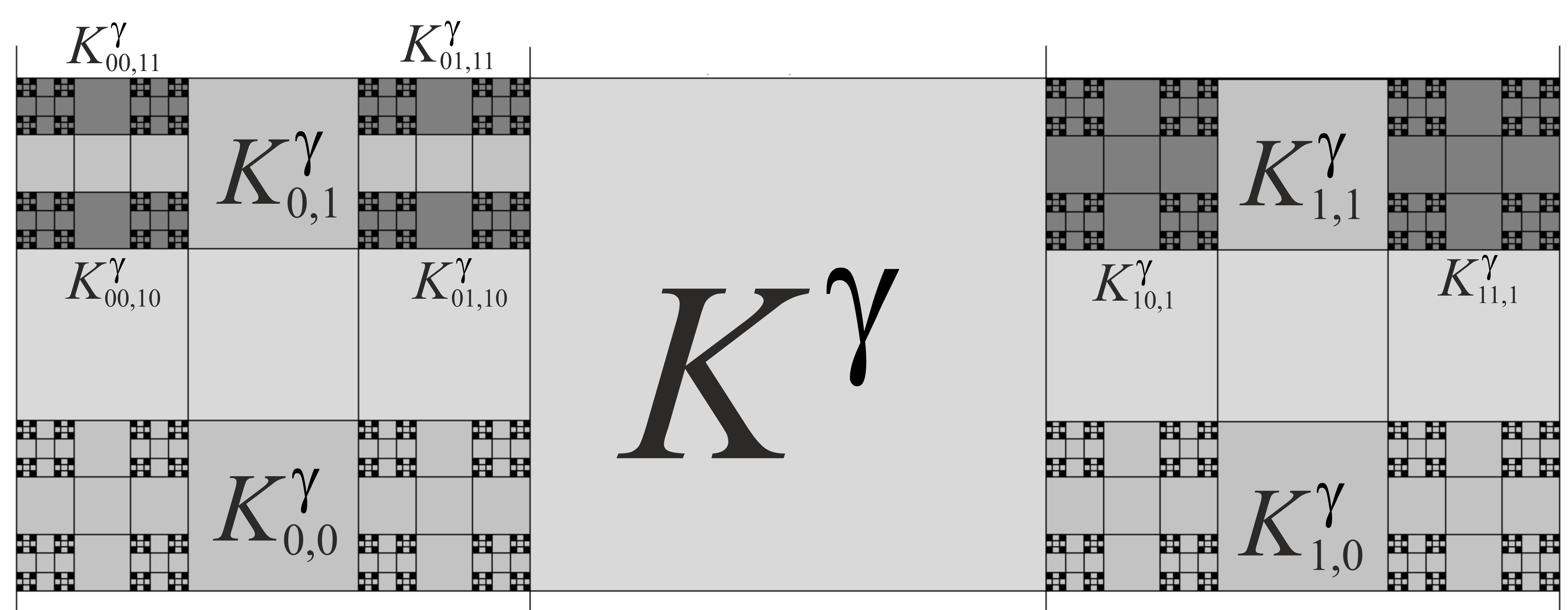}
\end{figure}
\smallskip

\noindent{\bf Step 3: Constructing an appropriate embedding of $Z$ into $K\times K$.} 

Consider the family $\mathcal Z$ of all open countable subsets in $Z$. The union $\bigcup\mathcal Z$, being a metrizable separable space, is Lindel\"of, and hence is countable (being a countable union of open countable subspaces of $Z$).

Then the complement $Z^{(\infty)}:=Z\setminus\bigcup\mathcal Z$ is uncountable and contains no isolated points. Being compact and zero-dimensional, the space $Z^{(\infty)}$ is homeomorphic to the Cantor set $K$ according to the classical Brouwer Theorem \cite[7.4]{Ke}.

In the Cantor set $K$ consider the compact subset
$$S_0:=\{0\}\cup\{\tfrac2{3^n}:n\in\IN\}=\{0\}\cup\{x_{0^n1}:n\in\w\}.$$

\begin{lemma} There exists a topological embedding $h:Z\to K\times K$ such that $h(Z^{(\infty)})= K\times S_0$.
\end{lemma}

\begin{proof} Observe that the product $ K\times S_0$ is homeomorphic to the Cantor set, being a zero-dimensional compact metrizable space without isolated points. Consequently, there exists a homeomorphism $h_0:Z^{(\infty)}\to K\times S_0$. Consider the space $C(Z, K^2)$ of continuous functions from $Z$ to $ K\times K$, endowed with the compact-open topology {(which coincides with the topology of uniform convergence).} 
It is clear that $C_0(Z, K^2):=\{f\in C(Z, K^2):f|Z^{(\infty)}=h_0\}$ is a closed subspace of $C(Z, K^2)$. The space $C_0(Z, K^2)$ is not empty as it contains the composition $h_0\circ r$ of $h_0$ with a retraction $r:Z\to Z^{(\infty)}$ (which exists by \cite[7.3]{Ke}). It is well-known \cite[4.19]{Ke} that the function space 
 $C(Z, K^2)$ is Polish and so is its closed subspace $C_0(Z, K^2)$.

For any point $x\in Z\setminus Z^{(\infty)}=\cup\Z$, consider the open set $U_{x,x}:=\{f\in C_0(Z, K^2):f(x)\notin  K\times S_0\}$ in $C_0(Z, K^2)$. Using the zero-dimensionality of $Z$ and nowhere density of $ K\times S_0$ in $ K\times K$, it can be shown that the open set $U_{x,x}$ is dense in $C_0(Z, K^2)$.

For any distinct points $x,y\in \cup\Z$, consider the open set  $U_{x,y}:=\{f\in C_0(Z, K^2):f(x)\ne f(y)\}$ in $C_0(Z, K^2)$. Using the zero-dimensionality of $Z$ and the fact that $ K^2$ has no isolated points, it can be shown that the set $U_{x,y}$ is dense in $C_0(Z, K^2)$. Since the space $C_0(Z, K^2)$ is Polish, the countable intersection 
$\bigcap_{x,y\in\cup\Z}U_{x,y}$ of the open dense sets in $C_0(Z, K^2)$ is not empty and hence contains some continuous function $h:Z\to  K\times K$.
The inclusion $h\in \bigcap_{x,y\in \cup\Z}U_{x,y}$ implies that the function $h$ is injective and hence is a topological embedding (by the compactness of $Z$).
\end{proof}

From now on, we shall identify the space $Z$ with its image $h(Z)$ in $ K\times K$. Under this identification, the subset $Z^{(\infty)}$ of $Z$ coincides with $ K\times S_0$.

For finite binary sequences $\gamma,\alpha,\beta\in 2^{<\w}$ with $|\beta|\le|\alpha|\le|\beta|+1$ let $$\underline{Z}^\gamma:=Z\cap\underline{K}^\gamma,\quad Z^\gamma:=Z\cap K^\gamma,\quad Z^\gamma_{\alpha,\beta}:=Z\cap K^\gamma_{\alpha,\beta}.$$
The sets $\underline{K}^\gamma:=K_\gamma\times K_{0^{|\gamma|}}$, $K^\gamma:=K_\gamma\times K_{0^{|\gamma|}1}$ and $K^\gamma_{\alpha,\beta}:=K_{\gamma\alpha}\times K_{0^{|\gamma|}1\beta}$ were defined in the second step.

The sets of the form $Z^\gamma_{\alpha,0^{|\alpha|}}$, $Z^\gamma_{\alpha,0^{|\alpha|-1}}$ and $\underline{Z}^\gamma$ are nonempty since they contain parts of $Z^{(\infty)}$. More precisely,
$$
Z^\gamma_{\alpha,0^{|\alpha|-1}}\cap Z^{(\infty)}=Z^\gamma_{\alpha,0^{|\alpha|}}\cap Z^{(\infty)}=K_{\gamma\alpha}\times \big\{\tfrac{2}{3^{|\gamma|+1}}\big\}
\mbox{ \ and \ }
\underline{Z}^\gamma\cap Z^{(\infty)}=K_{\gamma}\times\big(\{0\}\cup\big\{\tfrac{2}{3^{n}}:n>|\gamma|\big\}\big).
$$
On the other hand, the other ``rectangles'' $Z^\gamma_{\alpha,\beta}$ can be empty. 

For any sequences $\gamma,\alpha,\beta\in 2^{<\w}$ with $|\beta|\le|\alpha|\le|\beta|+1$ and non-empty set $Z^\gamma_{\alpha,\beta}$ choose a point $z^\gamma_{\alpha,\beta}\in Z^\gamma_{\alpha,\beta}$ so that $z^\gamma_{\alpha,\beta}=(x_{\gamma\alpha},\tfrac2{3^{|\gamma|+1}})\in Z^{(\infty)}$ if $\beta=0^{|\beta|}$. We recall that by $x_\alpha$ we denote the smallest real number in the set $K_\alpha\subset\IR$. 

\smallskip

\noindent{\bf Step 4: Defining retractions $\ret,\underline{\ret}$ and $\underline{\ret}^\gamma$.} At this step we define three special retractions $\ret:K\times K\to Z$,
$\underline{\ret}:Z\to K\times\{0\}$, and $\underline{\ret}^\gamma:Z^\gamma\to Z^\gamma\cap Z^{(\infty)}$ for all $\gamma\in 2^{<\w}$.

The retraction $\ret:K\times K\to Z$ assigns to each point $z\in K\times K$ the point $\ret(z)$, defined as follows. If $z\in Z$, then put $\ret(z):=z$. If $z\notin Z$, then $z\notin K\times\{0\}$ and there exists a unique $\gamma\in 2^{<\w}$ such that $z\in K^\gamma$. Observe that the family of pairs $$P_z=\big\{(\alpha,\beta)\in 2^{<\w}:|\beta|\le|\alpha|\le|\beta|+1,\;\; z\in K^\gamma_{\alpha,\beta},\;\;Z^\gamma_{\alpha,\beta}\ne\emptyset\big\}$$ is  non-empty (since $(\emptyset,\emptyset)\in P_z$) and 
finite (since $z\notin Z$ and $\diam K^\gamma_{\alpha,\beta}\to 0$ as $\min\{|\alpha|,|\beta|\}\to\infty$). 
 So, we can choose a pair $(\alpha,\beta)\in P_z$ having maximal sum $|\alpha|+|\beta|$ among the pairs in $P_z$ and put $\ret(z):=z^\gamma_{\alpha,\beta}\in Z^\gamma_{\alpha,\beta}$. It is easy to check that the so-defined map $\ret:K^2\to Z$ is a continuous retraction of $K\times K$ onto $Z$.

Next we define the retraction $\underline{\ret}:Z\to K\times\{0\}\subset Z^{(\infty)}$.  For any  point $z\in K\times\{0\}$, put $\underline{\ret}(z):=z$. If $z\in Z\setminus(K\times\{0\})$, then find a unique $\gamma\in 2^{<\w}$ with $z\in Z^\gamma$ and put $\underline{\ret}(z)=(x_\gamma,0)$ where $x_\gamma=\min K_\gamma$.

Finally, for every $\gamma\in 2^{<\w}$ we define a retraction $\underline{\ret}^\gamma:Z^\gamma\to Z^\gamma\cap Z^{(\infty)}=K_\gamma\times\{\frac2{3^{|\gamma|+1}}\}$, which is a ``local'' version of the retraction $\underline{\ret}$. Given any point $z\in Z^\gamma$ define $\underline{\ret}^\gamma(z)\in Z^\gamma\cap Z^{(\infty)}$ as follows. If $z\in Z^{(\infty)}$, then put $\underline{\ret}^\gamma(z):=z$. If $z\notin Z^{(\infty)}$, then we can find unique finite binary sequences $\alpha,\beta\in 2^{<\w}$ such that  $z\in Z^\gamma_{\alpha,\beta}$,  $\beta=0^{|\beta|-1}1$, and $|\alpha|=|\beta|$. In this case we put $\underline{\ret}^\gamma(z):=(x_{\gamma\alpha},\frac{2}{3^{|\gamma|+1}})$.
\begin{figure}[h]
	\includegraphics[width=\textwidth]{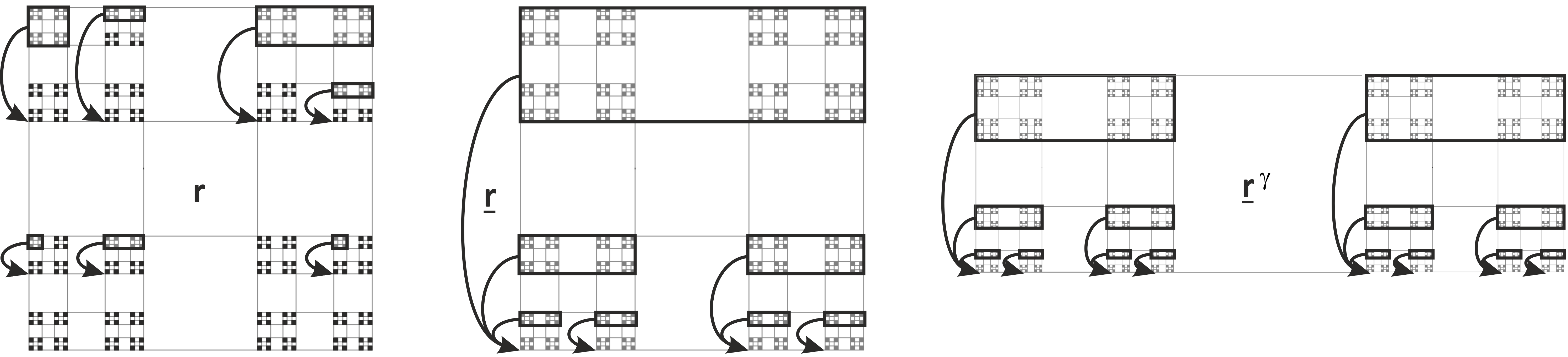}
	\caption{Visualizing the retractions $\ret$, $\underline{\ret}$ and $\underline{\ret}^\gamma$. }
\end{figure}
\smallskip

\noindent{\bf Step 5: Defining the  topological ultrafractal structure $\F_Z=\{f_0,f_1,f_2\}$ on $Z$.} 

First we define the surjective map $f_2:Z\to Z^\emptyset=Z\cap (K\times K_1)$. If $z=(x_\alpha,0)\in K\times\{0\}\subset Z$ for some $\alpha\in 2^\w$, then we put
$$
f_2(z):={\ret}(x_{\lfloor\alpha\rfloor},x_{1\lceil\alpha\rceil})
$$
and if $z\in Z^\gamma$ for some $\gamma\in\{0,1\}^{<\omega}$, then we put $f_2(z):=f_2(\underline{\ret}(z))=f_2(x_\gamma,0)$. We recall that by $\lfloor\alpha\rfloor$ and $\lceil\alpha\rceil$ we denote the even and odd parts of the binary sequence $\alpha=(\alpha_0,\alpha_1,\alpha_2,\dots)$. 

Below we list important properties of  the map $f_2:Z\to Z^\emptyset$. We skip the proof as they follows directly from the construction.
\begin{itemize}
\item[($A_2$)] Let $\gamma\in 2^{<\omega}$ and let $p:=\max\{k\leq|\gamma|:Z^\emptyset_{{\lfloor\gamma{\restriction}k\rfloor},\lceil\gamma{\restriction}k\rceil}\neq\emptyset\}$.
\begin{itemize}
\item[($A_{2=}$)] If $p=|\gamma|$, then $f_2(\underline{Z}^\gamma)=f_2(K_\gamma\times\{0\})=Z^\emptyset_{{\lfloor\gamma\rfloor},\lceil\gamma\rceil}$;
\item[($A_{2<}$)] If $p<|\gamma|$, then $f_2(\underline{Z}^\gamma)=\{z^\emptyset_{\lfloor\gamma{\restriction}p\rfloor,\lceil\gamma{\restriction}p\rceil}\}$.
\end{itemize}
\item[($B_2$)] For any $\gamma\in 2^{<\omega}$ the set $f_2({Z}^\gamma)$ coincides with the singleton $\{f_2(x_\gamma,0)\}=\{\ret(x_{\lfloor\gamma\rfloor,1\lceil\gamma\rceil})\}$.
\item[($C_2$)] the map $f_2$ is continuous.
\end{itemize}
\smallskip

Next, for every $i\in\{0,1\}$ we define the surjective map $f_i:Z\to \underline{Z}^i$. Observe that $$Z=(K\times\{0\})\cup\bigcup_{\gamma\in 2^{<\w}}Z^\gamma\mbox{ \ and \ }\underline{Z}^i=(K_i\times\{0\})\cup\bigcup_{\gamma\in 2^{<\w}}Z^{i\gamma}.$$ Fix any point $z\in Z$. If $z\in K\times\{0\}$, then $z=(x_\alpha,0)$ for a unique $\alpha\in 2^\w$ and we define $f_i(z):=(x_{i\alpha},0)$. If $x\in Z^\gamma$ for some $\gamma\in 2^{<\w}$, then $z=(x_{\gamma\alpha},x_{0^{|\gamma|}1\beta})$ for some $\alpha,\beta\in 2^\w$. If $\beta=0^\omega$, then $z=(x_{\gamma\alpha},\tfrac2{3^{|\gamma|+1}})\in Z^{(\infty)}$ and we put $$
f_i(z):=\ret\big(x_{i\gamma\lfloor\alpha\rfloor},x_{0^{|\gamma|}01\lceil\alpha\rceil}\big)
.$$Otherwise, we put $f_i(z):=f_i(\underline{\ret}^\gamma(z))$.
\smallskip

Now we list properties of $f_i$ which follow from its definition:
\begin{itemize}
\item[($A_i$)] Let $\gamma,\alpha,\beta\in 2^{<\omega}$ be finite binary sequences of length $|\beta|\le|\alpha|\le|\beta|+1$ such that $Z^\gamma_{\alpha,\beta}\neq\emptyset$, and let $p=\max\{k\leq|\alpha|:Z^{i\gamma}_{\lfloor\alpha{\restriction}k\rfloor,\lceil\alpha{\restriction}k\rceil}\neq\emptyset\}$ and $q:=\max\{k\leq |\beta|:\beta{\restriction}k=0^k\}$.
\begin{itemize}
\item[($A_{i{=}}$)] If $p=|\alpha|$ and $q=|\beta|$, then
$
f_i(Z^\gamma_{\alpha,\beta})=Z^{i\gamma}_{\lfloor\alpha\rfloor,\lceil\alpha\rceil}
$.
\item[($A_{i{\le}}$)] If $p\leq q$ and $p<|\alpha|$, then
$
f_i(Z^\gamma_{\alpha,\beta})$ is the singleton $\{z^{i\gamma}_{\lfloor\alpha{\restriction}p\rfloor,\lceil\alpha{\restriction}p\rceil}\}$.

\item[($A_{i>}$)] If $p>q$  and $q<|\beta|$, then $f_i(Z^\gamma_{\alpha,\beta})$ coincides with the singleton $\{f_i(x_{\gamma\alpha'},\frac2{3^{|\gamma|+1}})\}$ where $\alpha'=\alpha{\restriction}(q+1)$.
\end{itemize}
\item[($B_i$)]  $f_i(\underline{Z}^\gamma)=\underline{Z}^{i \gamma}$ for any $\gamma\in 2^{<\omega}$.
\item[($C_i$)] The map $f_i$ is continuous.
\end{itemize}

\begin{figure}[h]
	\includegraphics[width=0.6\textwidth]{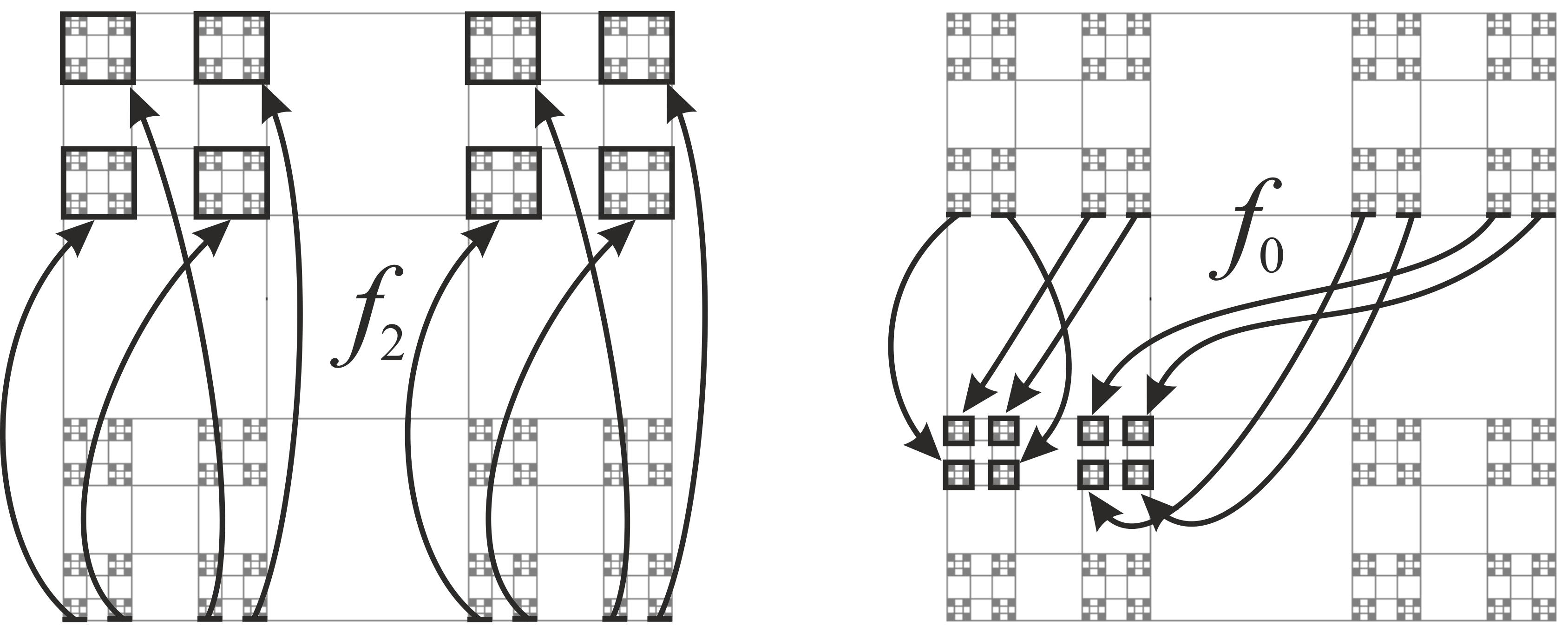}
	\caption{The maps $(x_\alpha,0)\mapsto(x_{\lfloor\alpha\rfloor},x_{1\lceil\alpha\rceil})$ and $(x_{\gamma\alpha},\frac2{3^{|\gamma|+1}})\mapsto(x_{0\gamma\lfloor\alpha\rfloor},x_{0^{|\gamma|}01\lceil\alpha\rceil})$.}
\end{figure}
\smallskip

Now we are going to prove that the space $Z$ endowed with the function family $\F_Z=\{f_0,f_1,f_2\}$ is a strict topological ultrafractal. For this we shall describe the structure of images $f(Z)$ under maps $f\in\F_Z^{\circ\w}$.

Observe that each map $f\in\F_Z^{\circ\w}$ is equal to the composition 
 $$f_\alpha:=f_{\alpha_0}\circ\dots\circ f_{\alpha_n}$$ for a suitable finite sequence $\alpha=(\alpha_0,\dots,\alpha_n)\in \{0,1,2\}^{<\w}$. If $\alpha$ is the empty sequence, then $f_\alpha$ is the identity map of $X$.

Taking into account the properties $(A_i)$--$(C_i)$ of the maps $f_i$, $i\in\{0,1,2\}$, we can prove the following lemma. 

\begin{lemma}\label{l:rec} If for a finite sequence $\gamma\in\{0,1,2\}^{<\w}$ the set $f_\gamma(Z)$ is not a singleton, then one of three possibilities holds:
\begin{enumerate}
\item[\textup{1)}] $\gamma\in\{0,1\}^{<\w}$ and $f_\gamma(Z)=\underline{Z}^\gamma$;
\item[\textup{2)}] $\gamma=2\beta$ for some sequence $\beta\in\{0,1\}^{<\w}$ and $f_\gamma(Z)=Z^\emptyset_{\lfloor\beta\rfloor,\lceil\beta\rceil}$;
\item[\textup{3)}] $\gamma=\alpha2\beta$ for some sequences $\alpha,\beta\in\{0,1\}^{<\w}$ with $|\alpha|>0$ and $f_\gamma(Z)=Z^\alpha_{\lfloor^i\!\lfloor\beta\rfloor\!^i\rfloor,\lceil^i\!\lfloor \beta\rfloor\!^i\rceil}$, where $i=|\alpha|$.
\end{enumerate}
\end{lemma}

Using Lemma~\ref{l:rec} it is not difficult to prove that $(Z,\F_Z)$ is a strict topological ultrafractal. 
\smallskip

By Theorem~\ref{t:Kam}, for every positive $\lambda<1$ the Kameyama pseudometric $p^{\F_Z}_\lambda$ on the topological ultrafractal $(Z,\F_Z)$ is an ultrametric with the doubling property. Moreover, for any points $x,y\in Z$ the distance $p^{\F_Z}_\lambda(x,y)$ can be calculated by the formula 
$$p^{\F_Z}_\lambda(x,y)=\inf\{\lambda^k:x,y\in f(Z)\;\mbox{ for some }f\in\F^{\circ k}\}.$$ Taking into account that $\{f(Z):f\in\F_Z^{\circ\w}\}\subset\big\{\{z\}:z\in Z\big\}\cup\{\underline{Z}^\gamma,Z^\gamma_{\alpha,\beta}:\alpha,\beta,\gamma\in 2^{<\w}\}$, we can conclude that for $x,y\in K\times\{0\}\subset Z$ this formula simplifies to
$$p^{\F_Z}_\lambda(x,y)=\inf\{\lambda^{|\gamma|}:x,y\in \underline{Z}^\gamma\;\mbox{for some $\gamma\in 2^{<\w}$}\}.$$




\smallskip

\noindent{\bf Step 6: Constructing the map $f_3:Z\to X\setminus Z$.} By our assumption, the compact metrizable space $X$ is finite-dimensional and so is the closed-and-open subspace $Y:=X\setminus Z$ of $X$. By the Embedding Theorem 1.11.4 in \cite{End}, the finite-dimensional compact metrizable space $Y$ admits a  topological embedding into the unit cube $[0,1]^d$ in the Euclidean space $\IR^d$ of dimension $d=2\dim(Y)+1$. So, we can (and will) identify $Y$ with a subset of $[0,1]^d$. 

For every $n\in\w$ consider the cover 
$$\square_n:=\big\{\tfrac1{2^n}(x+[0,1]^d):x\in \IZ^d\cap[0,2^n)^d\big\}$$of $[0,1]^d$ by closed cubes with side $\frac1{2^n}$. The family $\square_\w:=\bigcup_{n\in\w}\square_n$ is a tree with respect to the partial order $\le$ defined by $U\le V$ iff $V\subset U$. So, the cube $[0,1]^d$ is the smallest element of the tree $\square_\w$. It is clear that for every $n\in\w$, any cube $U\in\square_n$ has exactly $2^d$ successors in the tree $\square_{\w}$ (those successors are cubes of the cover $\square_{n+1}$, contained in $U$).

In the tree $\square_\w$ consider the subtree $T$ consisting of cubes $U\in\square_\w$ that have non-empty intersection with the subspace $Y$ of $[0,1]^d$. The choice of $Z$ ensures that the space $Y$ is not empty, so for any $n\in\w$ the $n$-th level $T_n=T\cap\square_n$ of the tree $T$ is not empty.

In the binary tree $2^{<\w}$ consider the subtree $(2^d)^{<\w}:=\bigcup_{n\in\w}2^{dn}$. 

By induction on the tree $(2^d)^{<\w}$ we can construct a surjective map $\varphi:(2^d)^{<\w}\to T$ such that for every $n\in\w$ and every $s\in 2^{dn}\subset(2^d)^{<\w}$ the element $\varphi(s)$ belongs to the $n$-th level $T_n$ of $T$ and the set $\{\varphi(\alpha):\alpha\in 2^{d(n+1)},\;\alpha{\restriction}dn=s\}$ coincides with the set $\{t\in T_{n+1}:\varphi(s)\le t\}$ of successors of $\varphi(s)$ in the tree $T$. This set of successor has cardinality $\le 2^d$, so the inductive construction of $\varphi$ is indeed possible.

The surjective morphism of the trees $\varphi:(2^d)^{<\w}\to T$ determines a well-defined continuous surjective map $\partial\varphi:2^\w\to Y$ assigning to each infinite binary sequence $s\in 2^\w$ the unique point of the intersection $\bigcap_{n\in\w}\varphi(s{\restriction}dn)$ of the decreasing sequence of cubes $\big(\varphi(s{\restriction}dn)\big)_{n\in\w}$. It is easy to see that for any finite binary sequence $s\in 2^{dn}\subset (2^d)^{<\w}$ we have $\partial\varphi({\uparrow}s)=Y\cap\varphi(s)$, where ${\uparrow}s=\{\alpha\in 2^\w:\alpha{\restriction}dn=s\}$.

Let $f_3=\partial\varphi\circ x^{-1}_{(\cdot)}\circ\underline{\ret}:Z\to Y$ 
 be the map assigning to each point $z\in Z$ the point $\partial\varphi(\alpha)$ where $\alpha\in 2^\w$ is the unique sequence such that $(x_\alpha,0)=\underline{\ret}(z)$ . It follows that for every $s\in 2^{dn}\subset (2^d)^{<\w}$ the set $\underline{Z}^s$ has image
$$f_3(\underline{Z}^s)=\partial\varphi({\uparrow}s)=Y\cap\varphi(s).$$
\smallskip

\noindent{\bf Step 7: Constructing the topological fractal structure $\F=\{\bar f_i\}_{i=0}^3$ on $X$.}  

As we already know, the function system $\F_Z=\{f_0,f_1,f_2\}$ is a topological (ultra)fractal structure on $Z$, which implies that for every $i\in\{0,1,2\}$ the map $f_i$ has a unique fixed point $z_i\in Z$. 

Let $\bar f_i:X\to Z\subset X$ be the (unique) extension of the map $f_i$ such that $\bar f_i(Y)=\{z_i\}$. In this case $\bar f_i(X)=\{z_i\}\cup f_i(Z)=f_i(Z)$. Let $\bar f_3:X\to Y\subset X$ be an extension of the map $f_3:Z\to Y$ such that $\bar f_3(Y)=\{y_3\}$ for some point $y_3\in Y$.

We claim that the function system $\F=\{\bar f_i\}_{i=0}^3$ on $X$ is a topologically contracting. Given any open cover $\U$ on $X$, use the continuity of the maps $\bar f_i$ for $i\in\{0,1,2,3\}$ and find an open cover $\V$ of $Z$ such that for any $V\in\V$ and $i\in\{0,1,2,3\}$ the set $\bar f_i(V)$ is contained in some set $U\in\U$. Since the function system $\F_Z=\{f_i\}_{i=0}^2$ on $Z$ is topologically contracting, there exists $k\in\IN$ such that for any function $f\in\F_Z^{\circ k}$ the set $f(Z)$ is contained in some set $V\in\V$. We claim that for any function $f\in\F^{\circ(k+1)}$ the set $f(X)$ is contained in some set $U\in\U$. This is trivially true if $f(X)$ is a singleton. So, we assume that $f(X)$ is not a singleton.

Since $f\in\F^{\circ(k+1)}$, there exists a sequence $\alpha=(\alpha_0,\dots,\alpha_k)\in \{0,1,2,3\}^{k+1}$ such that $f$ is equal to the map $$\bar f_\alpha:=\bar f_{\alpha_0}\circ \dots\circ \bar f_{\alpha_k}.$$ If for some positive $i\le k$ the number $\alpha_i=3$, then $\bar f_{\alpha_{i-1}}\circ \bar f_{\alpha_i}(X)=\bar f_{\alpha_{i-1}}(\bar f_3(Y\cup Z))=\bar f_{\alpha_{i-1}}(Y)$ is a singleton and so is the set $f(X)$. Since $f(X)$ is not a singleton, $\alpha_i\in\{0,1,2\}$ for all positive $i\le k$. The choice of $k$ ensures that the set $f_{\alpha_1}\circ\dots \circ f_{\alpha_k}(Z)$ is contained in some set $V\in\V$ and the choice of the cover $\V$ guarantees that $\bar f_{\alpha_0}(V)$ is contained in some $U\in\U$. Then
$$f(X)=\bar f_{\alpha_0}\circ \bar f_{\alpha_1}\circ\dots\circ \bar f_{\alpha_k}(Y\cup Z)=\bar f_{\alpha_0}\circ f_{\alpha_1}\circ\dots\circ f_{\alpha_k}(Z)\subset \bar f_{\alpha_0}(V)\subset U.$$  

Therefore the pair $(X,\F)$ is a topological fractal.

\noindent{\bf Step 8: Exploring properties of the Kameyama pseudometric $p_\lambda^{\F}$ on $X$.} In this step we shall show that for a sufficiently large $\lambda<1$ the Kameyama pseudometric $p^\F_\lambda$ on $X$ is a metric with the doubling property. 

Observe that Lemma~\ref{l:rec} and the definitions of the maps $\bar f_3$ and $\underline{\ret}$ imply the following description of the images $f(X)$ for $f\in\bar \F^{\circ\w}$.

\begin{lemma}\label{l:rec2} If for a finite sequence $\gamma\in\{0,1,2,3\}^{<\w}$ the set $\bar f_\gamma(Z)$ is not a singleton, then one of four possibilities holds:
\begin{enumerate}
\item[\textup{1)}] $\gamma\in\{0,1\}^{<\w}$, and $\bar f_\gamma(X)=f_\gamma(Z)=\underline{Z}^\gamma$;
\item[\textup{2)}] $\gamma=2\beta$ for some sequence $\beta\in\{0,1\}^{<\w}$, and $\bar f_\gamma(X)=f_\gamma(Z)=Z^\emptyset_{\lfloor\beta\rfloor,\lceil\beta\rceil}$;
\item[\textup{3)}] $\gamma=\alpha2\beta$ for some sequences $\alpha,\beta\in\{0,1\}^{<\w}$ with $|\alpha|>0$, and $\bar f_\gamma(X)=f_\gamma(Z)=Z^\alpha_{\lfloor^i\!\lfloor\beta\rfloor\!^i\rfloor,\lceil^i\!\lfloor \beta\rfloor\!^i\rceil}$, where $i=|\alpha|$;
\item[\textup{4)}] $\gamma=3\alpha$ for some sequence $\alpha\in\{0,1\}^{<\w}$, and $\bar f_\gamma(X)=\bar f_3(\underline{Z}^\alpha)$.
\end{enumerate}
\end{lemma}

 Fix any positive real number $\lambda<1$ with $\lambda^d\ge \frac12$ and consider the Kameyama pseudometric $p^\F_\lambda$ generated by the topological fractal structure $\F=\{\bar f_i\}_{i=0}^3$ on $X$. 

Observe that for each map $f\in\bigcup_{k=1}^\infty\F^{\circ k}$ the image $f(X)$ is disjoint either with $Y$ or with $Z$. This implies that $p^\F_\lambda(y,z)=1$ for any $y\in Y$ and $z\in Z$.

The restriction of the Kameyama pseudometric $p^\F_\lambda$ to $Z$ coincides with the Kameyama ultrametric $p^{\F_Z}_\lambda$ generated by the  topological ultrafractal structure $\F_Z$. By Theorem~\ref{t:Kam}, $p^{\F_Z}_\lambda=p^\F_\lambda|Z\times Z$ is an ultrametric with the doubling property.

Now we evaluate the restriction of the Kameyama pseudometric $p^\F_\lambda$ to $Y\times Y$ using the metric generated by the equivalent norm $\|(x_i)_{i\in d}\|:=\max_{i\in d}|x_i|$ on the Euclidean space $\IR^d\supset [0,1]^d\supset Y$.

Let $s\le 1$ be the unique positive real number such that $\lambda^d=\frac1{2^s}$.
The following lemma implies that the Kameyama pseudometric $p^\F_\lambda$ is a metric.

\begin{lemma} For any points $x,y\in Y$ we have $p^\F_\lambda(x,y)\ge\lambda^d\cdot\|x-y\|^s$.
\end{lemma}

\begin{proof} To derive a contradiction, assume that $p^\F_\lambda(x,y)<\lambda^d\cdot\|x-y\|^s$ for some points $x,y\in Y$ and using the definition of $p_\lambda^\F$, find a sequence $g_0,\dots,g_m\in \bar\F^{\circ\w}$ such that $x\in g_0(X)$, $y\in g_m(X)$, $g_i(X)\cap g_{i+1}(X)\ne\emptyset$ for all $0\le i<m$, and $\sum_{i=0}^m\lambda^{o(g_i)}<\lambda^d\cdot\|x-y\|^s$. The last inequality implies that $o(g_i)>d$ for all $i$. We can assume that each set $g_i(X)$ is not a singleton, so the number $o(g_i)$ is finite. 

Let $x_0=x$, $x_{m+1}=y$ and for every 
$i\in\{1,\dots,m\}$ let $x_i$ be any point in the intersection $g_{i-1}(X)\cap g_i(X)$. 
Since $x\in Y$, also $x_1,\dots,x_{m+1}\in Y$ and we can apply Lemma~\ref{l:rec2} and conclude that each map $g_i$ is of the form $\bar f_{3\alpha_i}$ for some sequence $\alpha_i\in \{0,1\}^{o(g_i)-1}$. For every $i\le m$ find a unique number $n_i\in\IN$ such that $dn_i\le o(g_i)-1<dn_i+d$ and let $\beta_i=\alpha_i{\restriction}dn_i\in 2^{dn_i}$. Then $g_i(X)=\bar f_{3\alpha_i}(X)\subset \bar f_{3\beta_i}(X)=\bar f_3(f_{\beta_i}(Z))= f_3(\underline{Z}^{\beta_i})=Y\cap \varphi(\beta_i)$, where $\varphi(\beta_i)$ is a cube in the cover $\square_{n_i}$.

Now we see that
$$\lambda^{o(g_i)}\ge \lambda^{dn_i+d}=\lambda^d\frac1{2^{sn_i}}\ge\lambda^d\cdot \|x_i-x_{i+1}\|^s$$
and hence $$\lambda^d\cdot\|x-y\|^s=\lambda^d\cdot\|x_0-x_{m+1}\|^s\le\lambda^d\sum_{i=0}^m\|x_i-x_{i+1}\|^s\le\sum_{i=0}^m\lambda^{o(g_i)},$$ which contradicts the choice of the sequence $g_0,\dots,g_m$.
\end{proof}

Our final lemma completes the proof of Theorem~\ref{t:main}.

\begin{lemma} The Kameyama metric $p^\F_\lambda$ on $X$ has the doubling property.
\end{lemma}

\begin{proof} The doubling property of $p^\F_\lambda$ will follow as soon as we show that each subset $S\subset Y$ can be covered by $17^d$ sets of diameter $\le\lambda\cdot\diam(S)$. This is trivial if $S$ is a singleton. So, we assume that $S$ contains more than one point. Fix any point $x\in S$ and find a unique $n\in\w$ such that $\frac1{2^{ns}}\le\sup_{y\in S}p^\F_\lambda(x,y)<\frac1{2^{(n-1)s}}$.
Let $\U_x=\{U\in\square_n:\exists y\in U,\;\|x-y\|<\frac8{2^n}\}$. It is easy to see that $|\U_x|\le 17^d$. We claim that $S\subset \bigcup\U_x$. 

Given any $y\in S\setminus\{x\}$, find a sequence $g_0,\dots,g_m\in\F$ such that $x\in g_0(X)$, $y\in g_m(X)$, $g_i(X)\cap g_{i+1}(X)\ne\emptyset$ for all $0\le i<m$, and $\sum_{i=0}^m\lambda^{o(g_i)}<\lambda^{-1}p^\F_\lambda(x,y)$.
We can assume that each set $g_i(X)$ is not a singleton, so the number $o(g_i)$ is finite. Since $S\subset Y=\bar f_3(X)$, we can also assume that $o(g_i)>0$.

Let $x_0=x$, $x_{m+1}=y$ and for every 
$i\in\{1,\dots,m\}$ let $x_i$ be any point in the intersection $g_{i-1}(X)\cap g_i(X)$. 
Since $x\in Y$, we can apply Lemma~\ref{l:rec2} and conclude that each $g_i$ is of the form $\bar f_{3\alpha_i}$ for some sequence $\alpha_i\in \{0,1\}^{o(g_i)-1}$. For every $i\le m$, find a unique number $n_i\in\w$ such that $dn_i\le o(g_i)-1<dn_i+d$ and let $\beta_i=\alpha_i{\restriction}dn_i\in 2^{dn_i}$. Then $g_i(X)=\bar f_{3\alpha_i}(X)\subset \bar f_{3\beta_i}(X)=\bar f_3(f_{\beta_i}(Z))=\bar f_3(\underline{Z}^{\beta_i})=Y\cap \varphi(\beta_i)$, where $\varphi(\beta_i)$ is a cube in the cover $\square_{n_i}$. Consequently,
$$
\begin{aligned}
\|x-y\|^s&=\|x_0-x_{m+1}\|^s\le \sum_{i=0}^m\|x_i-x_{i+1}\|^s\le 
 \sum_{i=0}^m\frac1{2^{n_is}}=\sum_{i=0}^m\lambda^{dn_i}
 =\lambda^{-d}\sum_{i=0}^m\lambda^{dn_i+d}\le\\
 &\le\frac1{\lambda^{d}}\cdot\sum_{i=0}^m\lambda^{o(g_i)}<\frac1{\lambda^{d+1}}p^\F_\lambda(x,y)\le \frac1{\lambda^{d+1}2^{(n-1)s}}=\frac{2^{s(2+\frac1d)}}{2^{ns}}\le \frac{8^s}{2^{ns}}
\end{aligned}
$$
and $\|x-y\|< \frac8{2^n}$. Then $y\in \bigcup\U_x$ and hence $S\subset Y\cap \bigcup\U_x$. For every cube $U\in \U_x\subset\square_n$ the set $Y\cap U$ coincides with the set $\bar f_3(\underline Z^{\gamma})=\bar f_3\circ \bar f_{\gamma}(X)$ for a suitable sequence $\gamma\in 2^{dn}$. Now the definition of the pseudometric $p^\F_\lambda$ implies that $Y\cap U$ has $p^\F_\lambda$-diameter $\le \lambda^{o(\bar f_3\circ \bar f_\gamma)}\le\lambda^{1+dn}=\lambda\cdot\lambda^{dn}=\lambda\cdot\frac1{2^{ns}}\le\lambda\cdot\diam(S)$.
 \end{proof}
\end{proof}

\section{Some Open Problems}

In this section we discuss a possible approach to a solution of the following open problem. 

\begin{problem} Is each metric fractal equi-H\"older equivalent to a fractal in $\ell_2$?
\end{problem} 

We can argue as follows. Let $(X,\F)$ be a topological fractal and $\alpha:\F^\w\to X$ be the surjective continuous map assigning to each sequence $(f_n)_{n\in\w}\in\F^\w$ the unique point of the intersection $$\bigcap_{n\in\w}f_0\circ\dots\circ f_n(X).$$

Given a real number $c<1$ on the countable set $\F^{<\w}=\bigcup_{n\in\w}\{n\}\times\F^n$ consider the measure $\mu_c$ such that $\mu_c(\{(n,f)\})=c^n$ for all $n\in\w$ and $f\in\F^n$.

Consider the map $\chi:\F^\w\to L_2(\mu_c)$ assigning to each sequence $\vec f\in \F^\w$ the characteristic function of the set $\{(n,\vec f{\restriction}n):n\in\w\}$.

Let $Z$ be the closed linear span of the set $\{\chi(x)-\chi(y):x,y\in\F^\w,\;\;\alpha(x)=\alpha(y)\}$ in the Hilbert space $L_2(\mu_c)$. Let $H=L_2(\mu_c)/Z$ be the quotient Hilbert space and $q:L_2(\mu_c)\to H$ be the quotient operator. The definition of $Z$ implies the existence of a continuous map $i_c:X\to H$ such that $i_c\circ \alpha=q\circ \chi$. The map $i_c$ induces a continuous pseudometric $p_c$ on $X$ defined by $$p_c(x,y)=\|i_c(x)-i_c(y)\|\mbox{ \ \ for \ \ $x,y\in X$}.$$

\begin{problem} Under which conditions on $(X,\F)$ and $c<1$ the pseudometric $p_c$ is a metric? Is $p_c$ a metric if $X$ admits a metric in which every map $f\in\F$ is contracting with Lipschitz constant $\Lip(f)\le c$?
\end{problem} 

\section{Acknowledgements} The first author would like to express his sincere thanks to W\l odzimierz Holszty\'nski, Bill Johnson, and Tomasz Kania for very helpful comments during a {\tt mathoverflow}-discussion devoted to absolute Lipschitz retracts and their relation to Banach space theory.
\newpage

\end{document}